\newtheorem{theorem}{Theorem}[section]
\newtheorem{lemma}[theorem]{Lemma}
\newtheorem{proposition}[theorem]{Proposition}
\newtheorem{Notation}[theorem]{Notation}
\theoremstyle{definition}
\newtheorem{definition}[theorem]{Definition}
\newtheorem{corollary}[theorem]{Corollary}
\newtheorem{example}[theorem]{Example}
\newtheorem{remark}[theorem]{Remark}
    \newcommand{\Rmnum}[1]{\expandafter\@slowromancap\romannumeral #1@}
\newtheorem{thm}{Theorem}[section]
\newtheorem{cor}[thm]{Corollary}
\theoremstyle{definition}
\theoremstyle{definition}
\theoremstyle{definition}
\newcommand{\N}{\mathbb{N}}
\newcommand{\Z}{\mathbb{Z}}
\newcommand{\C}{\mathbb{C}}
\newcommand{\T}{\mathbb{T}}
\newcommand{\Hom}{\operatorname{Hom}}
\newcommand{\Aff}{\operatorname{Aff}}
\newcommand{\morp}{contractive completely positive linear map}
\newcommand{\ep}{\varepsilon}
\newcommand{\CA}{$C^*$-algebra}
\newcommand{\af}{{\alpha}}
\newcommand{\dist}{{\rm dist}}
\newcommand{\beq}{\begin{eqnarray}}
\newcommand{\eneq}{\end{eqnarray}}
\title{Stability of rotation relations in $C^*$-algebras }
\author{Jiajie Hua and Qingyun Wang}
\date{}
\begin{document}

\maketitle

\begin{abstract}
Let $\Theta=(\theta_{j,k})_{3\times 3}$ be a non-degenerate real skew-symmetric $3\times 3$ matrix, where $\theta_{j,k}\in [0,1).$
For any $\varepsilon>0$, we prove that there exists $\delta>0$ satisfying the following: if $v_1,v_2,v_3$ are three unitaries in any unital simple separable $C^*$-algebra $A$ with tracial rank at most one, such that
$$\|v_kv_j-e^{2\pi i \theta_{j,k}}v_jv_k\|<\delta \,\,\,\, \mbox{and}\,\,\,\, \frac{1}{2\pi i}\tau(\log_{\theta}(v_kv_jv_k^*v_j^*))=\theta_{j,k}$$
for all $\tau\in T(A)$ and $j,k=1,2,3,$ where  $\log_{\theta}$ is a continuous branch of logarithm (see Definition \ref{logarithm}) for some real number $\theta\in [0, 1)$, then there exists a triple of unitaries $\tilde{v}_1,\tilde{v}_2,\tilde{v}_3\in A$ such that
$$\tilde{v}_k\tilde{v}_j=e^{2\pi i\theta_{j,k} }\tilde{v}_j\tilde{v}_k\,\,\,\,\mbox{and}\,\,\,\,\|\tilde{v}_j-v_j\|<\varepsilon,\,\,j,k=1,2,3.$$

The same conclusion holds if $\Theta$ is rational or non-degenerate and $A$ is a nuclear purely infinite
simple $C^*$-algebra (where the trace condition is vacuous). 

If $\Theta$ is degenerate and $A$ has tracial rank at most one or is nuclear purely infinite simple,
we provide some additional injectivity condition to get the above conclusion.
\end{abstract}

\section{Introduction}
An old and famous question in matrix and operator theory asks whether any pair of almost commuting self-adjoint matrices are norm close to a pair of exactly commuting self-adjoint matrices (\cite{Berg},\cite{Davidson},\cite{Halmos},\cite{Rosenthal}). The more accurate statement of this question is:

Let  $\varepsilon>0$, is there a $\delta>0$ such that if $A$ and $B$ are two $n\times n$ self-adjoint  matrices satisfying 
$$\|AB-BA\|<\delta,$$ 
then there exists a pair of self-adjoint matrices $\widetilde{A}$ and $\widetilde{B}$ in $M_n$ such that
$$\widetilde{A}\widetilde{B}=\widetilde{B}\widetilde{A},\,\,\,\,\|A-\widetilde{A}\|<\epsilon\,\,{\rm and}\,\,\|B-\widetilde{B}\|<\epsilon?$$
In the above question, it is important that $\delta$ is a universal constant independent of  matrix size $n.$
This question was solved affirmatively by Lin in the 1990's (See \cite{Halmos},\cite{Lin2}).
The corresponding question for a pair of unitary matrices is false, as pointed out by Voiculescu in \cite{Voiculescu1} and \cite{Voiculescu}. However the story does not end here. An obstruction has been found by Exel and Loring in \cite{EL}. The answer becomes yes if this obstruction vanishes. See also  (\cite{EL},\cite{ELP1},\cite{ELP},\cite{ExL1},\cite{ExL2}).

A natural generalization of the second question above is to see what happens for pairs of unitaries that almost commute up to a scalar with norm 1. It turns out that similar conclusion holds, and in fact one can deal with more general ambient $C^*$-algebras rather than just matrix algebras. More precisely, in \cite{Hua-Lin}, the first author and H. Lin proved the following:

\begin{theorem} \label{HH} Let $\theta$ be a real number in $(-\frac{1}{2},\frac{1}{2})$. For any $\varepsilon>0,$  there is a $\delta>0$, depending only on $\varepsilon$ and $\theta$, such that if $u$ and $v$ are two unitaries in any unital simple separable $C^*$-algebra $A$ with tracial rank zero satisfying

\beq \|uv-e^{2\pi i\theta}vu\|<\delta\quad{\rm and} \\
\frac{1}{2\pi i}\tau(\log(uvu^*v^*))=\theta\quad \label{tc}
\eneq
for all tracial state $\tau$ of $A$, then there exists a pair of unitaries $\tilde{u}$ and $\tilde{v}$ in $A$ such that
\[
\tilde{u}\tilde{v}=e^{2\pi i\theta}\tilde{v}\tilde{u},\,\,\,\,\|u-\tilde{u}\|<\varepsilon\, \,\,\,{\rm and}\,\,\,\,\|v-\tilde{v}\|<\varepsilon.
\]
\end{theorem}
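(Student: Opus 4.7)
The plan is a contradiction argument that reduces Theorem \ref{HH} to a quantitative uniqueness (approximate unitary equivalence) theorem for unital $*$-homomorphisms from the rotation $C^*$-algebra $A_\theta$ into a unital simple separable $C^*$-algebra $A$ with $\tr(A)\le 0$.

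Suppose the conclusion fails for some $\varepsilon_0>0$. Then there are unital simple separable $A_n$ with $\tr(A_n)\le 0$ and unitaries $u_n,v_n\in A_n$ with $\|u_nv_n-e^{2\pi i\theta}v_nu_n\|<1/n$ and $\frac{1}{2\pi i}\tau(\log(u_nv_nu_n^*v_n^*))=\theta$ for every $\tau\in T(A_n)$, yet no commuting pair $(\tilde u_n,\tilde v_n)$ with $\tilde u_n\tilde v_n=e^{2\pi i\theta}\tilde v_n\tilde u_n$ within $\varepsilon_0$. Passing to $B=\ell^\infty(\{A_n\})/c_0(\{A_n\})$, the lifts $U=[(u_n)]$ and $V=[(v_n)]$ satisfy $UV=e^{2\pi i\theta}VU$ exactly, hence induce a unital $*$-homomorphism $\Phi:A_\theta\to B$. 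Equivalently, for every finite subset $\mathcal F\subset A_\theta$ and every $\eta>0$, all sufficiently large $n$ admit an $(\mathcal F,\eta)$-approximately multiplicative unital map $\varphi_n:A_\theta\to A_n$ sending the canonical generators $u_0,v_0$ of $A_\theta$ to $u_n,v_n$.

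Next I would invoke the existence/uniqueness machinery for maps out of $A_\theta$. For existence, since $A_\theta$ is a nuclear AT-algebra of real rank zero with explicit invariants and $A_n$ has tracial rank zero, one produces a genuine unital $*$-homomorphism $\psi_n:A_\theta\to A_n$ whose $KK$-class and tracial pairing match those of the almost-homomorphism $\varphi_n$. The trace hypothesis is precisely what secures this match: on the canonical generators of $A_\theta$ one computes $\frac{1}{2\pi i}\tau_0(\log(u_0v_0u_0^*v_0^*))=\theta$, and the assumed equality $\frac{1}{2\pi i}\tau(\log(u_nv_nu_n^*v_n^*))=\theta$ is the corresponding de la Harpe--Skandalis determinant identity that identifies the tracial (and via the Bott construction, $K_0$-theoretic) data of $\varphi_n$ with that of $\psi_n$. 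A quantitative Lin-type uniqueness theorem then delivers unitaries $w_n\in A_n$ with $\|w_n\varphi_n(x)w_n^*-\psi_n(x)\|<\varepsilon_0/2$ for $x\in\{u_0,v_0\}$ and large $n$. Setting $\tilde u_n=w_n^*\psi_n(u_0)w_n$ and $\tilde v_n=w_n^*\psi_n(v_0)w_n$ yields an exactly rotating pair within $\varepsilon_0$ of $(u_n,v_n)$, contradicting our assumption.

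The principal obstacle is the quantitative uniqueness theorem for $*$-homomorphisms from $A_\theta$, with control constants $\delta$ independent of the ambient algebra $A$ and of the specific pair $(u,v)$. It demands (i) a semiprojectivity-style model for the relations defining $A_\theta$, so that approximate homomorphisms can be perturbed to honest ones with norm control on the generators, and (ii) a careful identification of the $K$-theoretic invariant: the existence of a continuous logarithm near $e^{2\pi i\theta}$ (which forces the range restriction $\theta\in(-\tfrac12,\tfrac12)$) together with the hypothesis $\frac{1}{2\pi i}\tau(\log(uvu^*v^*))=\theta$ fixes the Bott pairing needed as input to uniqueness. Establishing these ingredients---via the classification of homomorphisms into tracial rank zero $C^*$-algebras and the structure of $A_\theta$---is the technical heart; combining them with the sequence algebra contradiction completes the proof.
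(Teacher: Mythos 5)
Your blueprint --- a sequence-algebra/Choi--Effros lifting to get almost-multiplicative unital c.p.c.\ maps out of $\mathcal{A}_\theta$, the Exel trace formula converting the hypothesis $\frac{1}{2\pi i}\tau(\log(uvu^*v^*))=\theta$ into the correct (and strictly positive) $K_0$-datum for the Rieffel/Bott class, and then Lin's existence and uniqueness theorems for maps into tracial rank zero algebras followed by conjugation --- is exactly the strategy of the cited proof in \cite{Hua-Lin} (the paper itself quotes Theorem \ref{HH} rather than proving it) and of the paper's own generalization in Section 5 (Theorem \ref{T_stabilityTR1}, via Proposition \ref{P_AlmostHExist}, Lemma \ref{L_AlmostHK0}, Theorem \ref{trace formula}, and Theorems \ref{T_unique} and \ref{T_exist}). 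One caution: no semiprojectivity-type stability of the rotation relation is available (Corollary \ref{C_ETF} shows the relation cannot be stable without the trace hypothesis), so the passage from almost homomorphism to homomorphism must come entirely from the existence/uniqueness machinery with matched invariants, and the trace hypothesis enters through the Exel trace formula together with strict comparison to make the induced $K_0$-map positive, rather than through the de la Harpe--Skandalis determinant, which is only needed for the $U/CU$ data in the tracial-rank-one setting.
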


Note that the trace condition (\ref{tc}) is also necessary, see Corollary \ref{C_ETF}.

Let $\theta\in\mathbb{R}$. We call a pair of unitaries $u, v$ with $ uv = e^{2\pi i \theta} vu$ to satisfy the \emph{rotation relation} with respect to $\theta$,
since the universal $C^*$-algebra generated by such unitaries is the rotation algebra.
So another way to phrase Thoerem \ref{HH} is to say that the
rotation relation is stable in unital simple separable $C^*$-algebras with tracial rank zero, providing that the trace condition (\ref{tc}) is satisfied.

One can then further ask the stability of the rotation relations
for a $n$-tupe of unitaries.
Let $\Theta=(\theta_{j,k})_{n \times n}$ be a real skew-symmetric $n$ by $n$ matrix.
We say unitaries $u_1, u_2, \dots u_n$ in a $C^*$-algebra $A$
satisfies the rotation relations with respect to $\Theta$ if
$u_ku_j=e^{2\pi i \theta_{j,k}}u_ju_k$,
for all $1\leq j, k \leq n$.

We mainly focus on the case $n = 3$ in this paper. In view of what happens for $n=2$, it is natural to ask the following question:\\
(\textbf{Q1\label{conj}}):  \emph{Let $\Theta=(\theta_{j,k})_{3\times 3}$ be a real skew-symmetric $3\times 3$ matrix, where $\theta_{j,k}\in (\frac{1}{2},-\frac{1}{2}).$ For any $\varepsilon>0$, is there a  $\delta>0$ satisfying the following: For any unital simple separable $C^*$-algebra $A$ with tracial rank zero
which satisfies the UCT, any three unitaries $v_1,v_2,v_3\in A$  such that
$$\|v_kv_j-e^{2\pi i \theta_{j,k}}v_jv_k\|<\delta \,\,\,\, \mbox{and}\,\,\,\, \frac{1}{2\pi i}\tau(\log(v_kv_jv_k^*v_j^*))=\theta_{j,k}$$
for all $\tau\in T(A)$ and $j,k=1,2,3,$ there exists a triple of unitaries $\tilde{v}_1,\tilde{v}_2,\tilde{v}_3\in A$ such that
$$\tilde{v}_k\tilde{v}_j=e^{2\pi i\theta_{j,k} }\tilde{v}_j\tilde{v}_k\,\,\,\,\mbox{and}\,\,\,\,\|\tilde{v}_j-v_j\|<\varepsilon,\,\,j,k=1,2,3?$$}


In this paper, we  confirm (Q\ref{conj}) when $\Theta$ is a non-degenerate real skew-symmetric $3\times 3$ matrix and extend the result to the class of $C^*$-algebras with tracial rank at most one.
However, when $\Theta$ is a degenerate real skew-symmetric $3\times 3$ matrix, (Q\ref{conj}) is not true. We  give a counter example in Section \ref{S_SofTR1}. We then provide a sufficient condition to make (Q\ref{conj}) set up. We also deal with the purely infinite simple case and obtain similar results.

This paper is organized as following. In Section 2, we list some notations and known results. 
In Section 3, we give a very concrete description of the $K$-theory of the rotation algebra.
In Section 4, we further generalize the Exel trace formula.  In Section 5, we prove stability of the rotation relations in the class 
of $C^*$-algebras of tracial rank at most one with an additional injective condition. We then show that this
condition is automatic if $\Theta$ is non-degenerate, thus confirm (Q\ref{conj}) in this case.
In the last section, we deal with the class of nuclear purely infinite simple $C^*$-algebras. We show that
stability of the rotation relations always holds if $\Theta$ is either rational or non-degenerate.

\section{Preliminaries}
Let $n\geq2$ be an integer and denote by $\mathcal{T}_n$ the space of $n\times n$ real skew-symmetric matrices.

\begin{definition}(See also section $1$ of \cite{Rieffel1}) Let $\Theta=(\theta_{j,k})_{n \times n}\in \mathcal{T}_n$. The noncommutative tori $\mathcal{A}_{\Theta}$ is the universal $C^*$-algebra generated by unitaries $u_1,u_2,\dots,u_n$ subject to the relations
$$u_ku_j=e^{2\pi i\theta_{j,k}}u_ju_k$$
for $1\leq j,k\leq n.$ (Of course, if all $\theta_{j,k}$ are integers, it is not really noncommutative.)
\end{definition}

For any real skew symmetric matrix $\Theta=(\theta_{j,k})_{n \times n},$   $\mathcal{A}_{\Theta}$ has a canonical tracial state $\tau_{\Theta}$ given by the integration over
the canonical action of $\widehat{\mathbb{Z}^n}$ (See page 4 of \cite{Rieffel1} for more details). 
We denote this trace by $\tau_{\Theta}$.

\begin{definition}
A skew symmetric real $n \times n$ matrix $\Theta$ is {\it nondegenerate} if whenever $x\in \mathbb{Z}^n$ satisfies $\exp(2\pi i\langle x,\Theta y\rangle)=1$ for all $y\in \mathbb{Z}^n,$ then $x=0.$ Otherwise, we say $\Theta$ is {\it degenerate}.

We say $\Theta$ is {\it rational} if its entries are all rational numbers; otherwise it is said to be {\it nonrational}.
\end{definition}


The following result is well known.
\begin{theorem}[Theorem 1.9 of \cite{Phillips-06}]\label{simple and tracial} The $C^*$-algebra $\mathcal{A}_{\Theta}$ is simple if and only if $\Theta$ is nondegenerate. Moreover, if $\mathcal{A}_{\Theta}$ is simple it has a unique tracial state $\tau_{\Theta}$.
\end{theorem}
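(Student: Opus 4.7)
The plan is to exploit the canonical action of the torus $\mathbb{T}^n$ on $\mathcal{A}_\Theta$. Since the substitution $u_j\mapsto\lambda_j u_j$ preserves the defining rotation relations, the universal property supplies a point-norm continuous action $\alpha:\mathbb{T}^n\to \mathrm{Aut}(\mathcal{A}_\Theta)$, and averaging over Haar measure yields a faithful conditional expectation $E:\mathcal{A}_\Theta\to\mathcal{A}_\Theta^{\alpha}$ onto the fixed-point subalgebra. For $x=(x_1,\ldots,x_n)\in\mathbb{Z}^n$, write $u^x:=u_1^{x_1}\cdots u_n^{x_n}$; then $\alpha_\lambda(u^x)=\lambda^x u^x$, the linear span of the $u^x$ is a dense $\ast$-subalgebra, and the character $\lambda\mapsto\lambda^x$ is trivial only when $x=0$. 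Hence $\mathcal{A}_\Theta^{\alpha}=\mathbb{C}\cdot 1$ and $E$ factors as $E(a)=\tau_\Theta(a)\cdot 1$ for a faithful tracial state $\tau_\Theta$ satisfying $\tau_\Theta(u^x)=\delta_{x,0}$. A direct computation in the generators gives $u_k u^x u_k^{\ast}=e^{2\pi i\langle x,\Theta e_k\rangle}u^x$, so conjugation by a generator merely rotates the Fourier modes.

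Uniqueness of the trace under nondegeneracy is then immediate: any tracial state $\sigma$ on $\mathcal{A}_\Theta$ satisfies $\sigma(u^x)=\sigma(u_k u^x u_k^\ast)=e^{2\pi i\langle x,\Theta e_k\rangle}\sigma(u^x)$ for every $k$, so nondegeneracy forces $\sigma(u^x)=0$ whenever $x\neq 0$, and density of the monomial span gives $\sigma=\tau_\Theta$.

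For the implication ``nondegenerate $\Rightarrow$ simple'', let $I$ be a nonzero closed two-sided ideal and fix nonzero $a\in I$. The strategy is to show $E(a)\in I$; since $E(a^\ast a)=\tau_\Theta(a^\ast a)\cdot 1$ and $\tau_\Theta$ is faithful, this immediately forces $1\in I$. To produce $E(a)$ inside $I$, I would approximate $a$ by a Fourier polynomial $\sum_{x}c_x u^x$ and then, for each nonzero $x$ appearing, use nondegeneracy to choose $y\in\mathbb{Z}^n$ with $\langle x,\Theta y\rangle\notin\mathbb{Z}$ and average over the finite cyclic orbit of conjugations by powers of $u^y$ (which remain in $I$), thereby annihilating the $u^x$-component while leaving the constant term $c_0\cdot 1$ intact. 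Iterating strips every non-constant Fourier mode. The main obstacle is controlling this iterative averaging in operator norm; I would handle it by first drawing the approximant from a dense ``smooth'' subalgebra of rapidly decaying Fourier series, so that only finitely many significant modes need to be killed and the norm estimates telescope cleanly.

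Conversely, if $\Theta$ is degenerate, choose $0\neq x_0\in\mathbb{Z}^n$ with $\langle x_0,\Theta y\rangle\in\mathbb{Z}$ for all $y\in\mathbb{Z}^n$; then $u^{x_0}$ commutes with every generator and hence is central. Since $\tau_\Theta(u^{x_0})=0$ while $u^{x_0}$ is a unitary, it cannot be a scalar, so its spectrum contains at least two points; splitting the spectrum by two nonzero continuous functions with disjoint supports produces a proper nonzero ideal, so $\mathcal{A}_\Theta$ is not simple.
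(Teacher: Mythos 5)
The paper does not prove this statement at all: it is quoted verbatim as Theorem 1.9 of Phillips's paper (ultimately Slawny's theorem on twisted group $C^*$-algebras), so there is no in-paper argument to compare against. Your proposal is the standard self-contained proof and is essentially correct: the torus action and faithful expectation $E$ onto $\mathbb{C}\cdot 1$, the conjugation identity $u_k u^x u_k^* = e^{2\pi i\langle x,\Theta e_k\rangle}u^x$ (which suffices, since $\langle x,\Theta e_k\rangle\in\mathbb{Z}$ for all $k$ already forces $\langle x,\Theta y\rangle\in\mathbb{Z}$ for all $y$), the trace-uniqueness computation, the averaging argument for simplicity, and the central unitary $u^{x_0}$ in the degenerate case all work as you describe. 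Two points to tighten. First, the phrase ``average over the finite cyclic orbit of conjugations by powers of $u^y$'' is inaccurate: when $\langle x,\Theta y\rangle$ is irrational the orbit is infinite and no finite average annihilates the mode exactly; you should instead use the Ces\`{a}ro averages $T_M=\frac1M\sum_{m=0}^{M-1}\mathrm{Ad}\bigl((u^y)^m\bigr)$, which are unital contractions mapping $I$ into $I$, fix the constant term, and multiply the $u^x$-coefficient by $\frac1M\sum_{m<M}e^{2\pi i m\langle x,\Theta y\rangle}\to 0$; since your whole scheme is approximate anyway (apply the same $T_M$'s to $a\in I$ and to the polynomial approximant, which stay within $\|a-p\|$), this repairs the step without any smooth-subalgebra machinery. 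Second, you should verify that the state $\tau_\Theta$ defined by $E(a)=\tau_\Theta(a)1$ is actually tracial; this is a one-line check on monomials ($\tau_\Theta(u^xu^y)$ and $\tau_\Theta(u^yu^x)$ both vanish unless $y=-x$, in which case $u^xu^{-x}$ and $u^{-x}u^x$ are the same scalar), but it is used and should not be left implicit.
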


\begin{lemma}[Lemma 3.1 of \cite{Itz-Phillips}]\label{3-dimensional rank}
Let $\Theta\in \mathcal{T}_3$, and
$$\Theta=\left(
\begin{array}{ccc}
0 & \theta_{1,2} & \theta_{1,3} \\
-\theta_{1,2} & 0 & \theta_{2,3} \\
-\theta_{1,3} & -\theta_{2,3} & 0 \\
\end{array}
\right).$$
Then $\Theta$ is nondegenerate if and only if $\dim_{\mathbb{Q}}(\rm{span}_{\mathbb{Q}}(1,\theta_{1,2},\theta_{1,3},\theta_{2,3}))\geq 3.$
\end{lemma}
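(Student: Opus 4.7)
The plan is to translate nondegeneracy of $\Theta$ into a concrete lattice condition on its entries, then prove the equivalence by elementary linear algebra over $\mathbb{Q}$. First, since $\langle x, \Theta y\rangle = \langle \Theta^T x, y\rangle$ and $\Theta^T = -\Theta$, the condition $\exp(2\pi i \langle x, \Theta y\rangle) = 1$ for every $y \in \mathbb{Z}^3$ is equivalent to $\Theta x \in \mathbb{Z}^3$. So nondegeneracy amounts to the implication ``$x \in \mathbb{Z}^3$ and $\Theta x \in \mathbb{Z}^3$ imply $x = 0$''; writing out $\Theta x$, this says the three scalars $\theta_{1,2} x_2 + \theta_{1,3} x_3$, $-\theta_{1,2} x_1 + \theta_{2,3} x_3$, $-\theta_{1,3} x_1 - \theta_{2,3} x_2$ being simultaneously integers force $x = 0$.

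For the direction ``degenerate implies $\dim_{\mathbb{Q}}\operatorname{span}_{\mathbb{Q}}(1,\theta_{1,2},\theta_{1,3},\theta_{2,3}) \le 2$'', I would take a nonzero $x \in \mathbb{Z}^3$ with $\Theta x \in \mathbb{Z}^3$ and do a short case analysis on which coordinate of $x$ is nonzero. For instance, if $x_1 \neq 0$, the second and third integer conditions let us solve for $\theta_{1,2}$ and $\theta_{1,3}$ as $\mathbb{Q}$-linear combinations of $1$ and $\theta_{2,3}$; the remaining cases are symmetric. In each case the $\mathbb{Q}$-span of $\{1,\theta_{1,2},\theta_{1,3},\theta_{2,3}\}$ collapses onto a span of two elements.

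For the converse, split on whether the span has dimension $1$ or $2$. If $\theta_{1,2},\theta_{1,3},\theta_{2,3}$ are all rational, then $x = (N,N,N)$ for any common denominator $N$ satisfies $\Theta x \in \mathbb{Z}^3$. If the span is exactly two-dimensional, write it as $\mathbb{Q} + \mathbb{Q}\alpha$ with $\alpha$ irrational and decompose $\theta_{j,k} = p_{j,k} + q_{j,k}\alpha$ with $p_{j,k}, q_{j,k} \in \mathbb{Q}$. Since $\alpha \notin \mathbb{Q}$, the requirement $\Theta x \in \mathbb{Z}^3$ forces the $\alpha$-coefficient of each entry to vanish, producing a homogeneous system $Qx = 0$ where $Q$ is a $3 \times 3$ rational skew-symmetric matrix built from the $q_{j,k}$. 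Any odd-dimensional skew-symmetric matrix is singular, so $Q$ has a nonzero rational kernel vector $x_0$; scaling $x_0$ by a suitable positive integer clears denominators in the rational remainders coming from the $p_{j,k}$ and yields a nonzero $x \in \mathbb{Z}^3$ with $\Theta x \in \mathbb{Z}^3$, proving degeneracy.

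The only delicate step is this last scaling: one must verify that multiplying $x_0$ by an integer simultaneously preserves the vanishing of the $\alpha$-parts (clear, since that condition is homogeneous linear) and clears the denominators of the three rational parts. This is routine once one notes that the rational and irrational contributions decouple thanks to $\{1,\alpha\}$ being $\mathbb{Q}$-linearly independent; the conceptual heart of the argument remains the singularity of $3 \times 3$ rational skew-symmetric matrices, which is what pins the equivalence to the dimension threshold $3$.
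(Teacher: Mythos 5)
Your proof is correct and complete. The paper cites this as Lemma 3.1 of Itz\'a-Ortiz and Phillips without reproducing a proof, so there is no in-paper argument to compare against; what you have written is the natural self-contained argument. The reduction of nondegeneracy to ``$x\in\mathbb{Z}^3$ and $\Theta x\in\mathbb{Z}^3$ imply $x=0$'' (via $\langle x,\Theta y\rangle=-\langle\Theta x,y\rangle$), the case split on which coordinate of a putative $x$ is nonzero, and the converse via the decomposition $\theta_{j,k}=p_{j,k}+q_{j,k}\alpha$ together with the singularity of the odd-dimensional skew-symmetric matrix $Q=(q_{j,k})$ all check out. One remark worth making explicit (you gesture at it but do not spell it out): in the two-dimensional case $Q\neq 0$ automatically, since otherwise all $\theta_{j,k}$ would be rational and the span would be one-dimensional; and the skew-symmetry of $Q$ follows from that of $\Theta$ together with the uniqueness of the decomposition over the $\mathbb{Q}$-basis $\{1,\alpha\}$. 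The final scaling step is, as you say, routine: $Qx_0=0$ is homogeneous linear over $\mathbb{Q}$, and the rational entries $P_j(x_0)$ have a common denominator which an integer dilation clears.
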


\begin{Notation}
{\rm
Let $A$ be a unital \CA. Denote by $T(A)$ the tracial state space of $A.$ The set of all faithful tracial
states on $A$ will be denoted by $T_f(A)$. Denote by ${\rm Aff}(T(A))$ the space of all real affine continuous functions on $T(A).$ If $\tau\in T(A)$, we will use $\tau^{\oplus k}$ for the trace $\tau\otimes {\rm Tr}$ on $M_k (A)$ for all integer $k \geq1$, where Tr is the
unnormalized trace on the matrix algebra $M_k$. Denote by $A_{s.a.}$ the set of all self-adjoint elements in $A$. If $a\in A_{s.a.}$, denote by $\breve{a}$ the real affine function in ${\rm Aff}(T(A))$ defined by $\breve{a}(\tau)=\tau(a)$ for all
$\tau\in T(A).$

Denote by $U_n(A)$ the group of unitaries in $M_n(A)$ for $n\geq 1$. We often use $U(A)$ to express $U_1(A)$. Denote by
$U_0(A)$ the subgroup of $U(A)$ consists of unitaries path connected to $1_A$. Denote by $CU(A)$ the 
closure of the subgroup generated by commutators of $U(A)$. 
Let $U_{\infty}(A)$ be the increasing union of $U_n(A), n=1, 2, \dots$,  viewed as a topological group
with the inductive limit topology. Define $U_{\infty, 0}(A)$ and $CU_{\infty}(A)$ in a similar fashion.

Let $A$ be a unital \CA\, and let $u\in A$ be a unitary.
Define ${\rm Ad}\, u(a)=u^*au$ for all $a\in A.$ 
For any $a\in A,$ denote by $\mathrm{sp}(a)$ the spectrum of $a.$

Denote by $\rho_A: K_0(A)\to \Aff(T(A))$ the order preserving map
induced by $\rho_{A}([p])(\tau)=\tau^{\oplus n} (p)$ for all projections $p\in A\otimes M_n,$ $n=1,2,....$
}
\end{Notation}

\begin{definition}[ \cite{Rieffel3}] \label{def of Rieffel P}Let $\theta\in (0,1).$ Choose $\epsilon$ such that $0<\epsilon \leq \theta<\theta+\epsilon\leq 1.$ Set 
$$f(e^{2\pi it})=\left\{
\begin{aligned}
\epsilon^{-1}t,\phantom{ttttttttttt}& & 0\leq t\leq \epsilon,\phantom{tttt}\\
1,\phantom{tttttttttttttt}& & \epsilon\leq t\leq \theta,\phantom{tttt}\\
\epsilon^{-1}(\theta+\epsilon-t),& & \theta\leq t\leq \theta+\epsilon,\\
0,\phantom{tttttttttttttt} & & \theta+\epsilon \leq t\leq 1,\\
\end{aligned}\right.
$$
and
$$g(e^{2\pi it})=\left\{
\begin{aligned}
0,\phantom{tttttttttttttttttttttttttttt}& & 0\leq t\leq \theta,\phantom{tttt}\\
[f(e^{2\pi it})(1-f(e^{2\pi it}))]^{1\slash 2},& & \theta\leq t\leq \theta+\epsilon,\\
0,\phantom{tttttttttttttttttttttttttttt}& & \theta+\epsilon\leq t\leq 1.\\
\end{aligned}\right.
$$
Then $f$ and $g$ are the real-valued functions on the circle which satisfy 

(1) $g(e^{2\pi it})\cdot g(e^{2\pi i(t-\theta)})=0, $

(2) $g(e^{2\pi it})\cdot [f(e^{2\pi it})+f(e^{2\pi i(t+\theta)})]=g(e^{2\pi it})$ and

(3) $f(e^{2\pi it})=[f(e^{2\pi it})]^2+[g(e^{2\pi it})]^2 +[g(e^{2\pi i(t-\theta)})]^2.$

Let $\mathcal{A}_{\theta}$ be the universal $C^*$-algebra generated by unitaries $u,v$ subject to the relations
$uv=e^{2\pi i\theta}vu.$  The {\it Rieffel projection} in $\mathcal{A}_\theta$ is the projection $$p = g(u)v^* + f(u) + vg(u)$$ (see the construction in Theorem 1.1 of \cite{Rieffel3}).
\end{definition}

\begin{definition}\label{D_totalK}
 Let $A$ be a $C^*$-algebra. Following Dadarlat and Loring (\cite{DL}), we set 
 $$\underline{K}(A) = 
\oplus_{n=1}^{\infty} \left(K_0(A; \Z/n\Z) \oplus K_1(A; \Z/n\Z)\right).
$$
Let $B$ be another unital $C^*$-algebra. If furthermore, $A$ is assumed to be separable and satisfy
the Universal Coefficient Theorem (\cite{Rosenberg-Schochet-Duke}), by \cite{DL},
$$KL(A, B) \cong Hom_{\Lambda}(\underline{K}(A), \underline{K}(B)).$$
Here $KL(A, B) = KK(A, B)/Pext(K_*(A), K_*(B))$, where $Pext$ is the subgroup of $Ext_{\Z}^{1}$
consists of classes of pure extensions (see \cite{DL} for details).
\end{definition}

We recall the definition of tracial rank of
$C^*$-algebras:
\begin{definition}\label{Dtr1} (\cite{LinPLMS}) Let $A$ be a unital simple $C^*$-algebra.
We say $A$ has {\it tracial rank at most one},  if for any $\varepsilon>0$, any
finite subset $\mathcal{F}\subset A$  and any
nonzero positive element $c\in A,$ there exists a 
$C^*$-subalgebra $B \subset A$ with $1_{B}=p\neq 0$ such
that:\\
(\romannumeral1) $B \cong \oplus_{j=1}^{n} P_jM_{k_j}(C(X_j))P_j$, where $X_j$ is a compact metrizable space
with covering dimension $\leq 1$, $P_j$ is a projection in $M_{k_j}(C(X_j))$, for $j = 1, 2, \dots, n$.\\
(\romannumeral2) $\|pa-ap\|<\varepsilon$ for all $a\in \mathcal{F}$,\\
(\romannumeral3) $\dist (pap, B)<\varepsilon$ for all $a\in \mathcal{F}$  and\\
(\romannumeral4) $1_A-p$ is Murray-von Neumann equivalent to a projection in $\overline{cAc}.$
\end{definition}

If $A$ has tracial rank at most one, we write $\mathrm{TR}(A) \leq 1.$

\begin{definition} Let $L : A \rightarrow B$ be a linear map. Let $\delta> 0$ and $\mathcal{G}\subset A$ be a finite subset. We say $L$ is
$\mathcal{G}$-$\delta$-multiplicative if
$$\|L(ab)-L(a)L(b)\|<\delta\,\,\, {\rm for\,\,\, all}\,\,\, a, b \in \mathcal{G}.$$
\end{definition}

Let $C_n$ be a $C^*$-algebra such that $K_*(A; \Z/n\Z) \cong K_*(A \otimes C_n)$.
For convenience, if $L \colon A \rightarrow B$ is a linear map, we will use the same symbol $L$
to denote the induced map $L \otimes \mathrm{id}_{n} \colon A \otimes M_n \rightarrow B \otimes M_n$.
as well as $L \otimes \mathrm{id}_{C_n} \colon A \otimes C_n \rightarrow B \otimes C_n$.

It is well known that if $a \in M_n(A)$ is an `almost' projection, then it is norm close to a projection. Two norm close projection are unitarily equivalent.
So $[a] \in K_0(A)$ is well-defined. Similarly, if $b \in M_n(A)$ is an 
`almost' unitary, we shall use $[b]$ to denote the equivalent class in 
$K_1(A)$. If $L \colon A \rightarrow B$ is an `almost' homomorphism, we shall
use $[L]$ to denote the induced (partially defined) map on the $K$-theories.
From \cite{Dadarlat-IJM} or \cite{Lin-book}, we can know that for any finite set $\mathcal{P} \subset \underline{K}(A)$, there is a finite subset $\mathcal{G} \subset A$ and $\eta > 0$ such that,
for any unital completely positive $\mathcal{G}$-$\delta$-multiplicative linear map $L$, $[L]$ is well defined on $\mathcal{P}$. 

\begin{Notation}
If $u$ is a unitary in $U_{\infty}(A)$, we shall use $[u]$ to denote the equivalence class
in $K_1(A)$ and use $\bar{u}$ to denote the equivalent class in $U_{\infty}(A)/CU_{\infty}(A)$.
If $L \colon A \rightarrow B$ is an `almost' homomorphism
so that $L(u)$ is invertible, we shall define 
\[
\langle L(u) \rangle = L(u)(L(u)^*L(u))^{-1/2}.
\]
\end{Notation}

\section{K-theory of rotation algebra}
Any noncommutative tori can be written as an iterated crossed product by $\Z$. It then follows quickly from the Pimsner-Voiculsecu six-term
exact sequence that $K_*(\mathcal{A}_\Theta) \cong \Z^{2^{n}-1}$, for any
$\Theta \in \mathcal{T}_n$. For our purpose, we need a more specific
basis for the K-theory.
We start with $ n=2 $. Theorem 3.1 and Proposition 3.3 below are surely well known.

\begin{theorem} \label{T_KofRA}
Let $\theta \in [0, 1)$. Let $z$ be the identity
function of $C(\T)$. Let $\alpha \colon \mathbb{Z} \curvearrowright C(\T)$ be the action
determined by $\alpha(z) = e^{2\pi i \theta} z$. Then $\mathcal{A}_\theta$ is
naturally isomorphic to the crossed product $C(\T) \rtimes _\alpha \Z$.
Moreover, there is a short exact sequence induced from the
Pimsner-Voiculsecu six-term exact sequence from this crossed product:

\[
 0\longrightarrow  K_0(C(\T))\stackrel{\phantom{aaa}i_{*0}\phantom{aa}}{\longrightarrow}K_0(\mathcal{A}_{\theta})\stackrel{\phantom{aaa}\partial \phantom{aa}}{\longrightarrow} K_{1}(C(\T))\longrightarrow 0.
\]
\end{theorem}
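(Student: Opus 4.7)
The plan is to first establish the crossed-product identification using universal properties, and then feed it into the Pimsner--Voiculescu six-term exact sequence. For the identification, I would construct mutually inverse unital $*$-homomorphisms between $\mathcal{A}_\theta$ and $C(\T) \rtimes_\alpha \Z$. Inside the crossed product, the canonical generator $z \in C(\T)$ and the implementing unitary $w$ satisfy $w z w^* = \alpha(z) = e^{2\pi i \theta} z$, so setting $u := z$ and $v := w$ produces a pair of unitaries obeying the rotation relation; modulo the standard flip $\mathcal{A}_\theta \cong \mathcal{A}_{-\theta}$ (obtained by interchanging the two generators or by taking adjoints), the universal property of $\mathcal{A}_\theta$ yields a unital $*$-homomorphism $\mathcal{A}_\theta \to C(\T) \rtimes_\alpha \Z$. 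Conversely, the spectrum of the generator $u \in \mathcal{A}_\theta$ is all of $\T$, so continuous functional calculus gives a unital embedding $C(\T) \hookrightarrow \mathcal{A}_\theta$ sending $z \mapsto u$; together with $v$, which satisfies $v u v^* = e^{\pm 2\pi i \theta} u = \alpha^{\pm 1}(u)$, this is a covariant representation of $(C(\T), \Z, \alpha)$, hence extends to a $*$-homomorphism $C(\T) \rtimes_\alpha \Z \to \mathcal{A}_\theta$ by the universal property of the crossed product. The two maps are mutually inverse on generators.

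For the $K$-theory part, I would apply the Pimsner--Voiculescu six-term exact sequence to the crossed product $C(\T) \rtimes_\alpha \Z$:
\[
K_i(C(\T)) \xrightarrow{\mathrm{id} - \alpha_*} K_i(C(\T)) \xrightarrow{i_*} K_i(\mathcal{A}_\theta) \xrightarrow{\partial} K_{i-1}(C(\T)).
\]
Since $K_0(C(\T)) = \Z \cdot [1_{C(\T)}]$ and $\alpha$ is unital, $\alpha_*$ is the identity on $K_0$. For $K_1(C(\T)) = \Z \cdot [z]$, the unitary $\alpha(z) = e^{2\pi i \theta} z$ is homotopic to $z$ through the path $t \mapsto e^{2\pi i t \theta} z$ of unitaries in $C(\T)$, so $\alpha_*[z] = [z]$. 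Therefore $\mathrm{id} - \alpha_* = 0$ on both $K_0$ and $K_1$, and the six-term sequence breaks into two short exact sequences, the relevant one being
\[
0 \to K_0(C(\T)) \xrightarrow{i_{*0}} K_0(\mathcal{A}_\theta) \xrightarrow{\partial} K_1(C(\T)) \to 0,
\]
exactly as asserted.

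There is no genuine difficulty in this argument: the only thing requiring care is matching sign conventions between the defining relation $uv = e^{2\pi i\theta} vu$ of $\mathcal{A}_\theta$ and the crossed-product relation $w z w^* = \alpha(z)$, which is handled by the symmetry $\mathcal{A}_\theta \cong \mathcal{A}_{-\theta}$ and does not affect the form of the exact sequence. The only substantive input is the (well-known) homotopy $z \simeq e^{2\pi i\theta} z$ in $U(C(\T))$, which is what forces $\alpha_*$ to be trivial on $K_1$ and thus makes the sequence split off.
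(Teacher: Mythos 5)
The paper does not actually give a proof of Theorem \ref{T_KofRA}---it is dismissed as ``surely well known''---but the argument you give (identify $\mathcal{A}_\theta$ with the crossed product, apply Pimsner--Voiculescu, note that $\alpha$ is homotopic to the identity so $\mathrm{id}_*-\alpha_*=0$ and the six-term sequence splits into two short exact sequences) is precisely the strategy the authors deploy in their proof of Proposition \ref{P_KofNCT}, so you have reconstructed the intended reasoning. Your proof is correct: the universal-property argument for the isomorphism (including the observation that $\mathrm{sp}(u)=\T$ and the care about the sign convention, which you correctly note is irrelevant to the exact sequence) and the homotopy $t\mapsto e^{2\pi i t\theta}z$ killing $\mathrm{id}_*-\alpha_*$ on $K_1(C(\T))$ are exactly what is needed.
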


\begin{definition}\label{D_RieffelProj}
Let $\theta \in [0, 1)$. Let $u$, $v$ be the canonical generators of $\mathcal{A}_{\theta}$. If $\theta \neq 0$, we define $b_{u, v} \in K_0(\mathcal{A}_{\theta})$ to be the equivalent class of the Rieffel projection as constructed in the proof of Theorem 1.1 of \cite{Rieffel3}. If $\theta = 0$, we let $b_{u, v} \in K_0(\mathcal{A}_{\theta})$ be the bott element. (See Definition 2.7 of \cite{Hua-Lin}.)\\
\end{definition}

\begin{proposition}\label{P_RieffelProj}
Let $\theta \in [0, 1)$. Let $\tau$ be the canonical tracial state on $\mathcal{A}_\theta$.
Let $b_{u, v} \in K_0(\mathcal{A}_{\theta})$ be defined as in Definition \ref{D_RieffelProj}.
Then $\tau(b_{u, v}) = \theta$.
Moreover,
if $\partial \colon K_0(\mathcal{A}_{\theta}) \rightarrow K_1(C(\T))$
is the homomorphism defined as in Theorem \ref{T_KofRA},
then $\partial (b_{u, v}) = [z]$,
where $z$ is the identity function of $C(\T)$.
\end{proposition}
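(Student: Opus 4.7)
The plan is to treat the two assertions separately: the trace identity reduces to a straightforward integral computation, while the boundary identity requires a lift into the Pimsner--Voiculescu Toeplitz extension plus a sign analysis.

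For the trace, the case $\theta = 0$ is immediate, since there $b_{u,v}$ is the Bott element in $K_0(C(\T^2))$, whose canonical trace equals $0 = \theta$. For $\theta \in (0,1)$ I use the canonical conditional expectation $E \colon \mathcal{A}_\theta \to C^*(u) \cong C(\T)$ sending $\sum_n a_n(u)\, v^n$ to $a_0(u)$. Since $\tau = \tau_{\mathrm{Leb}} \circ E$ and the off-diagonal terms $g(u)v^*$ and $v g(u)$ vanish under $E$, I obtain
\[
\tau(p) \;=\; \tau(f(u)) \;=\; \int_0^1 f(e^{2\pi i t})\, dt,
\]
and splitting this integral into the four pieces from the piecewise definition of $f$ yields $\tfrac{\epsilon}{2} + (\theta - \epsilon) + \tfrac{\epsilon}{2} + 0 = \theta$.

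For the boundary map, I use the Pimsner--Voiculescu Toeplitz extension
\[
0 \to C(\T) \otimes \mathcal{K} \;\longrightarrow\; \mathcal{T}_\alpha \;\longrightarrow\; \mathcal{A}_\theta \to 0,
\]
from which $\partial([q]) = [\exp(2\pi i\, \tilde q)]$ for any projection $q \in \mathcal{A}_\theta$ and any self-adjoint lift $\tilde q \in \mathcal{T}_\alpha$. When $\theta = 0$ the equality $\partial(b_{u,v}) = [z]$ is the standard Bott-periodicity/Künneth statement for $C(\T^2) \cong C(\T) \otimes C(\T)$. When $\theta \in (0,1)$, I lift the Rieffel projection to $\tilde p = g(u)s^* + f(u) + s\, g(u)$, where $s$ is the canonical isometry in $\mathcal{T}_\alpha$ lifting $v$ (so $s^*s = 1$ but $ss^*$ is a proper subprojection whose complement generates the ideal $C(\T) \otimes \mathcal{K}$). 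Using Rieffel's identities (1)--(3), the discrepancy $\tilde p^2 - \tilde p$ lies in this ideal, so $\exp(2\pi i \tilde p) - 1 \in C(\T)\otimes \mathcal{K}$, and one can read off its $K_1$-class as an integer multiple of $[z]$.

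The main obstacle I anticipate is pinning down that this integer is exactly $+1$. A soft dimension count already gives $\partial(b_{u,v}) = \pm [z]$: indeed, for $\theta \in (0,1)$ we have $K_0(\mathcal{A}_\theta) = \Z[1] \oplus \Z \, b_{u,v}$ classically, and since $\partial[1] = 0$ and $\partial$ is surjective onto $K_1(C(\T)) \cong \Z [z]$, $\partial(b_{u,v})$ must be a generator. Resolving the sign requires either an honest tracking of the coefficient of $s$ in the expansion of $\exp(2\pi i \tilde p)$ (isolating the term that survives in the quotient by the ideal and comparing it to $z$), or a homotopy argument that deforms $\theta$ to $0$ and identifies the Rieffel projection with the Bott element in the limit. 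The homotopy route is cleaner conceptually but requires care in choosing the parameter $\epsilon$ in Rieffel's construction as $\theta \to 0$, since the definition of $p$ degenerates there.
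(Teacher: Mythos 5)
The paper contains no proof of this proposition at all: it is declared ``surely well known'' just before Theorem \ref{T_KofRA}, the trace value being Rieffel's classical computation \cite{Rieffel3} and the boundary identity being the computation in the appendix of \cite{PimV} (the same argument the authors invoke later in the proof of Proposition \ref{P_KofNCT}). So your proposal has to stand on its own. The trace half does: composing with the canonical expectation onto $C^*(u)\cong C(\T)$ kills the terms $g(u)v^*$ and $vg(u)$, and $\int_0^1 f(e^{2\pi i t})\,dt=\tfrac{\ep}{2}+(\theta-\ep)+\tfrac{\ep}{2}=\theta$; the $\theta=0$ case via the Bott element is also fine. This part is complete and correct.

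The boundary half is not finished, and the gap is twofold. First, the proposition asserts $\partial(b_{u,v})=[z]$, not $\pm[z]$; you explicitly defer the determination of the sign (``honest tracking of the coefficient of $s$'' or ``a homotopy argument'') and carry out neither, and the sign moreover depends on conventions you never fix (which generator of $\mathcal{A}_\theta$ is the implementing unitary in the crossed-product picture of Theorem \ref{T_KofRA}, and whether the exponential map of the Toeplitz extension is $[\exp(2\pi i\tilde q)]$ or its inverse). Second, your ``soft dimension count'' rests on the claim that $K_0(\mathcal{A}_\theta)=\Z[1]\oplus\Z\, b_{u,v}$ for \emph{every} $\theta\in(0,1)$. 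For irrational $\theta$ this is indeed classical (Pimsner--Voiculescu plus Rieffel: the trace range is $\Z+\theta\Z$ and the trace is injective on $K_0$ since $\Z+\theta\Z$ has rank two), but for rational $\theta$ the trace is not injective on $K_0\cong\Z^2$, and by the exact sequence of Theorem \ref{T_KofRA} (with $\partial[1]=0$) the basis claim is \emph{equivalent} to the assertion that $\partial(b_{u,v})$ generates $K_1(C(\T))$ --- exactly what is to be proved --- so citing it as classical is circular there. The rational case cannot be discarded: the paper uses the proposition for all $\theta\in[0,1)$, e.g.\ to conclude $\partial(b^\theta_{\ep})=[x_0]$ right after Definition \ref{dbot}. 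To close the gap you must either actually perform the Toeplitz computation you sketch (expand $\exp(2\pi i\tilde p)$ with $\tilde p=g(u)s^*+f(u)+sg(u)$, identify its class in $K_1(C(\T)\otimes\mathcal{K})$, and keep track of orientations --- this is in effect the argument of the appendix of \cite{PimV}), or make the deformation in $\theta$ rigorous, for instance via the soft torus together with a perturbation statement in the spirit of Lemma \ref{P_RieffelProjsame}, rather than leaving it as a proposed route.
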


For $n = 3$, we have the following result:

\begin{proposition}\label{P_KofNCT}
Let $\Theta \in \mathcal{T}_3$. Let $u_1, u_2, u_3$ be the canonical
generators of $\mathcal{A}_\Theta$. For $k=1,2,3$ and $j=1,2,3$, let $b_{u_k, u_j}$ be the the $K_0$-element of $\mathcal{A}_\Theta$
as defined in Definition \ref{D_RieffelProj}
(viewing $\mathcal{A}_{\theta_{j, k}}$ as a subalgebra of $\mathcal{A}_\Theta$).
Then
\begin{enumerate}
    \item $K_0(\mathcal{A}_\Theta)$ is generated by
    $[1_{\mathcal{A}_{\Theta}}], b_{u_2,u_1}, b_{u_3,u_1}, b_{u_3,u_2}$.
    \item $K_1(\mathcal{A}_\Theta)$ is generated by $[u_1], [u_2], [u_3]$
    and a fourth element $[u]$.
\end{enumerate}
\end{proposition}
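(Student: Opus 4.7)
The plan is to realize $\mathcal{A}_\Theta$ as the crossed product $\mathcal{A}_{\theta_{1,2}} \rtimes_\alpha \Z$, where the generator of $\Z$ corresponds to $u_3$ and $\alpha$ is the automorphism of $\mathcal{A}_{\theta_{1,2}}$ determined by $\alpha(u_j) = e^{2\pi i \theta_{j,3}} u_j$ for $j=1,2$. The path $\alpha_t(u_j) = e^{2\pi i t\theta_{j,3}} u_j$ is a continuous family of automorphisms from $\mathrm{id}$ to $\alpha$, so $\alpha_* = \mathrm{id}$ on $K_*(\mathcal{A}_{\theta_{1,2}})$. Consequently the Pimsner--Voiculescu six-term sequence degenerates into two short exact sequences
\[
0 \longrightarrow K_0(\mathcal{A}_{\theta_{1,2}}) \stackrel{i_*}{\longrightarrow} K_0(\mathcal{A}_\Theta) \stackrel{\partial}{\longrightarrow} K_1(\mathcal{A}_{\theta_{1,2}}) \longrightarrow 0
\]
and
\[
0 \longrightarrow K_1(\mathcal{A}_{\theta_{1,2}}) \stackrel{i_*}{\longrightarrow} K_1(\mathcal{A}_\Theta) \stackrel{\partial}{\longrightarrow} K_0(\mathcal{A}_{\theta_{1,2}}) \longrightarrow 0,
\]
where $i \colon \mathcal{A}_{\theta_{1,2}} \hookrightarrow \mathcal{A}_\Theta$ is the canonical inclusion. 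By Theorem \ref{T_KofRA} applied to $\mathcal{A}_{\theta_{1,2}}$, $K_0(\mathcal{A}_{\theta_{1,2}})$ is freely generated by $[1_{\mathcal{A}_{\theta_{1,2}}}]$ and $b_{u_2, u_1}$, and $K_1(\mathcal{A}_{\theta_{1,2}})$ is freely generated by $[u_1], [u_2]$.

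The crux of (1) is to identify $\partial(b_{u_3, u_j}) = [u_j]$ in $K_1(\mathcal{A}_{\theta_{1,2}})$ for $j = 1, 2$. For this I would invoke naturality of the Pimsner--Voiculescu sequence: the subalgebra $\mathcal{A}_{\theta_{j,3}} = C^*(u_j, u_3) \subset \mathcal{A}_\Theta$ is itself a crossed product $C^*(u_j) \rtimes \Z$ by the restricted action, and the inclusion $C^*(u_j) \hookrightarrow \mathcal{A}_{\theta_{1,2}}$ is equivariant. This produces a commutative square
\[
\begin{array}{ccc}
K_0(\mathcal{A}_{\theta_{j,3}}) & \stackrel{\partial}{\longrightarrow} & K_1(C^*(u_j)) \\
\downarrow & & \downarrow \\
K_0(\mathcal{A}_\Theta) & \stackrel{\partial}{\longrightarrow} & K_1(\mathcal{A}_{\theta_{1,2}}).
\end{array}
\]
Proposition \ref{P_RieffelProj} applied in $\mathcal{A}_{\theta_{j,3}}$ yields $\partial(b_{u_3, u_j}) = [u_j] \in K_1(C^*(u_j))$, which then pushes forward to $[u_j] \in K_1(\mathcal{A}_{\theta_{1,2}})$. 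Combined with the identification of $\mathrm{Im}(i_*)$ as the subgroup generated by $[1_{\mathcal{A}_\Theta}]$ and $b_{u_2, u_1}$, this shows that $\{[1_{\mathcal{A}_\Theta}], b_{u_2, u_1}, b_{u_3, u_1}, b_{u_3, u_2}\}$ generates $K_0(\mathcal{A}_\Theta)$, giving (1).

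For (2), the second sequence shows that $K_1(\mathcal{A}_\Theta)$ is generated by $[u_1], [u_2]$ (the images of the $K_1$-generators of $\mathcal{A}_{\theta_{1,2}}$) together with any preimages under $\partial$ of the generators $[1]$ and $b_{u_2, u_1}$ of $K_0(\mathcal{A}_{\theta_{1,2}})$. The standard computation of the Pimsner--Voiculescu boundary on the implementing unitary gives $\partial([u_3]) = [1_{\mathcal{A}_{\theta_{1,2}}}]$, so $[u_3]$ serves as one preimage; we then take $[u]$ to be any preimage of $b_{u_2, u_1}$. The main technical obstacle is the identification of the boundary map on the Rieffel projections; once this is reduced via naturality to the two-variable case, Proposition \ref{P_RieffelProj} supplies the answer, and everything else follows from the splitting of the Pimsner--Voiculescu sequences.
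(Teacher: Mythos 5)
Your proposal is correct and follows essentially the same route as the paper: realize $\mathcal{A}_\Theta$ as $\mathcal{A}_{\theta_{1,2}}\rtimes_\alpha\Z$, use that $\alpha$ is homotopic to the identity to split the Pimsner--Voiculescu sequence into the two short exact sequences, compute $\delta([u_3])=[1_{\mathcal{A}_{\theta_{1,2}}}]$ and $\partial(b_{u_3,u_j})=[u_j]$, and read off the generators. The only difference is in the justification of $\partial(b_{u_3,u_j})=[u_j]$: the paper appeals to ``the same argument as in the appendix of Pimsner--Voiculescu,'' whereas you deduce it from naturality of the Pimsner--Voiculescu sequence applied to the equivariant inclusion $C^*(u_j)\hookrightarrow\mathcal{A}_{\theta_{1,2}}$ together with Proposition \ref{P_RieffelProj}, which is a perfectly valid (and arguably more self-contained) way to carry out that step.
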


\begin{proof}
Let $\mathcal{A}_{\theta_{1,2}}$ denote the universal $C^*$-algebra
generated by a pair of unitaries $u_1$ and $u_2$
subject to $u_2u_1=e^{2\pi i\theta_{1,2}}u_1 u_2.$
Define an action $\alpha$ on $\mathcal{A}_{\theta_{1,2}}$ by
$\alpha(u_1)=u_3u_1u_3^*=e^{2\pi i \theta_{1,3}}u_1$
and $\alpha(u_2)=u_3u_2u_3^*=e^{2\pi i \theta_{2,3}}u_2$.
Then $\mathcal{A}_{\Theta}=\mathcal{A}_{\theta_{1,2}}\rtimes_{\alpha}\mathbb{Z}.$ We have the following Pimsner-Voiculescu
six-term exact sequence:

$$\xymatrix{
  K_0(\mathcal{A}_{\theta_{1,2}})  \ar[r]^{\rm{id}_{*0}-\alpha_{*0}} &  K_0(\mathcal{A}_{\theta_{1,2}}) \ar[r]^{\phantom{aaa}\iota_{*0}\phantom{aa}} &  \ar[d]^{\partial}K_0(\mathcal{A}_{\Theta}) \\
  K_1(\mathcal{A}_{\Theta}) \ar[u]^{\delta} & K_{1}(\mathcal{A}_{\theta_{1,2}}) \ar[l]_{\phantom{aa}\iota_{*1}\phantom{aa}} & K_{1}(\mathcal{A}_{\theta_{1,2}}) \ar[l]_{\rm{id}_{*1}-\alpha_{*1}}.}
$$
Since $\alpha$ is homotopic to the identity, we get two short exact sequences
\begin{equation}
 0\longrightarrow  K_0(\mathcal{A}_{\theta_{1,2}})\stackrel{\phantom{aaa}\iota_{*0}\phantom{aa}}{\longrightarrow}K_0(\mathcal{A}_{\Theta})\stackrel{\phantom{aaa}\partial \phantom{aa}}{\longrightarrow} K_{1}(\mathcal{A}_{\theta_{1,2}})\longrightarrow 0
 \label{esK0}
\end{equation}
and
\begin{equation}
 0\longrightarrow K_1(\mathcal{A}_{\theta_{1,2}}) \stackrel{\phantom{aaa}\iota_{*1}\phantom{aa}}{\longrightarrow}K_1(\mathcal{A}_{\Theta})\stackrel{\phantom{aaa}\delta\phantom{aa}}\longrightarrow K_{0}(\mathcal{A}_{\theta_{1,2}})\longrightarrow 0.
 \label{esK1}
\end{equation}

By Lemma 1.2 of \cite{PimV}, $K_1(\mathcal{A}_\Theta)$ is generated by the classes of unitaries of the form
\[
(1_{\mathcal{A}_{\theta_{1,2}}} \otimes 1_n - P) + Px(u_3^*\otimes 1_n)P, \text{\,\,where\,\,} P, x \in \mathcal{A}_{\theta_{1,2}}\otimes M_n\,\,{\rm and}\,\,
P \text{\,\,is a projection}.
\]
From the proof of Lemma 2.3 of \cite{PimV},
the connecting map $\delta \colon K_1(\mathcal{A}_\theta) \rightarrow K_0(A_{\theta_{1,2}})$ is determined by:
\[
\delta [(1_{\mathcal{A}_{\theta_{1,2}}} \otimes 1_n - P) + Px(u_3^*\otimes 1_n)P] = [P].
\]
In particular, we have $\delta ([u_3]) = [1_{\mathcal{A}_{\theta_{1,2}}}]$. Let $[u]$ be an element in $K_1(\mathcal{A}_{\theta_{1,2}})$ such that
$\partial [u] = b_{u_2, u_1}$.
Note that $[1_{\mathcal{A}_{\theta_{1,2}}}]$ and
$b_{u_2, u_1}$ are generators of $K_0(\mathcal{A}_{\theta_{1,2}})$
by Proposition \ref{P_RieffelProj}.
By Corollary 2.5 of \cite{PimV}, $K_1(\mathcal{A}_{\theta_{1,2}})$ is isomorphic to $\mathbb{Z}^2$
which is generated by $[u_1]$ and $[u_2]$.
From the exact sequence (\ref{esK1}) we see that
$K_1(\mathcal{A}_{\theta_{1,2}})$ is generated by
$[u_1], [u_2], [u_3]$ and $[u]$.

By the same argument as in the proposition
in the appendix of \cite{PimV}, we see that
the map $\partial \colon K_0(\mathcal{A}_{\Theta}) \rightarrow K_1(\mathcal{A}_{\theta_{1,2}})$
is determined by $\partial (b_{u_3,u_1})=[u_1]$,  $\partial (b_{u_3,u_2})=[u_2]$, then
the exactness of the sequence (\ref{esK0})
shows that $K_0(\mathcal{A}_\Theta)$ is generated by
$[1_{\mathcal{A}_{\Theta}}], b_{u_2,u_1}, b_{u_3,u_1}, b_{u_3,u_2}$.
\end{proof}

\section{Generalized Exel trace formula}
The Exel trace formula is one of the main ingredient in the proof of our main results. We need some preparation
before we can state the Exel trace formula (in our generalized form).

\begin{definition} (see \cite{Farsi}) For $0\leq \varepsilon\leq 2$ and $\theta\in [0, 1),$ the soft rotation algebras $\mathcal{S}_{\varepsilon,\theta}$ is defined to be
the universal $C^*$-algebra generated by a pair of unitaries
$\mathfrak{u}_{\varepsilon,\theta}$ and $\mathfrak{v}_{\varepsilon,\theta}$ subject to $\|\mathfrak{u}_{\varepsilon,\theta}\mathfrak{v}_{\varepsilon,\theta}-e^{2\pi i\theta}\mathfrak{v}_{\varepsilon,\theta}\mathfrak{u}_{\varepsilon,\theta}\|\leq \varepsilon$.

In particular, we have $\mathcal{S}_{0,\theta}=\mathcal{A}_{\theta}.$
\end{definition}

\begin{definition} For $0\leq \varepsilon\leq 2$ and $\theta\in [0, 1),$ let $B_{\varepsilon,\theta}$ be the universal $C^*$-algebra generated by unitaries $x_n,n\in \mathbb{Z}$, subject to the relations $\|x_{n+1}-e^{2\pi i\theta}x_n\|\leq \varepsilon.$ Let $\alpha_{\varepsilon,\theta}$ be the automorphism of $B_{\varepsilon,\theta}$ specified by $\alpha_{\varepsilon,\theta}(x_n)=x_{n+1}.$
\end{definition}

\begin{proposition}[Proposition 2.2 of \cite{Farsi}]\label{soft isomorphism} For $0<\varepsilon<2,$
$B_{\varepsilon,\theta}\rtimes_{\alpha_{\varepsilon,\theta}}\mathbb{Z}$ is isomorphic to $\mathcal{S}_{\varepsilon,\theta}$.
\end{proposition}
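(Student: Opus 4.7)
The plan is to construct mutually inverse $*$-homomorphisms between $\mathcal{S}_{\varepsilon,\theta}$ and $B_{\varepsilon,\theta} \rtimes_{\alpha_{\varepsilon,\theta}} \mathbb{Z}$ directly from the universal properties of the algebras involved. Write $u$ for the canonical implementing unitary of $\alpha_{\varepsilon,\theta}$ inside the crossed product, so that $u x_n u^* = x_{n+1}$ for every $n \in \mathbb{Z}$.

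In one direction, I would define $\phi \colon \mathcal{S}_{\varepsilon,\theta} \to B_{\varepsilon,\theta} \rtimes \mathbb{Z}$ on generators by $\mathfrak{u}_{\varepsilon,\theta} \mapsto u$ and $\mathfrak{v}_{\varepsilon,\theta} \mapsto x_0$. The one thing to verify is the soft rotation estimate for this pair, which reduces to
$$\|u x_0 - e^{2\pi i \theta} x_0 u\| = \|(x_1 - e^{2\pi i \theta} x_0)u\| = \|x_1 - e^{2\pi i \theta} x_0\| \leq \varepsilon,$$
and this is exactly one of the defining relations of $B_{\varepsilon,\theta}$. The universal property of $\mathcal{S}_{\varepsilon,\theta}$ then delivers $\phi$.

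For the reverse direction, I would first build $\psi_0 \colon B_{\varepsilon,\theta} \to \mathcal{S}_{\varepsilon,\theta}$ by setting $x_n \mapsto \mathfrak{u}_{\varepsilon,\theta}^n \mathfrak{v}_{\varepsilon,\theta} \mathfrak{u}_{\varepsilon,\theta}^{-n}$. The needed bound
$$\|\psi_0(x_{n+1}) - e^{2\pi i\theta}\psi_0(x_n)\| = \bigl\|\mathfrak{u}_{\varepsilon,\theta}^n(\mathfrak{u}_{\varepsilon,\theta}\mathfrak{v}_{\varepsilon,\theta} - e^{2\pi i \theta}\mathfrak{v}_{\varepsilon,\theta}\mathfrak{u}_{\varepsilon,\theta})\mathfrak{u}_{\varepsilon,\theta}^{-n-1}\bigr\| \leq \varepsilon$$
is precisely the defining relation of $\mathcal{S}_{\varepsilon,\theta}$, so the universal property of $B_{\varepsilon,\theta}$ gives $\psi_0$. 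By construction, $\psi_0 \circ \alpha_{\varepsilon,\theta} = \mathrm{Ad}(\mathfrak{u}_{\varepsilon,\theta}) \circ \psi_0$, so the pair $(\psi_0, \mathfrak{u}_{\varepsilon,\theta})$ is a covariant representation of $(B_{\varepsilon,\theta}, \mathbb{Z}, \alpha_{\varepsilon,\theta})$ inside $\mathcal{S}_{\varepsilon,\theta}$. Since $\mathbb{Z}$ is amenable (so full and reduced crossed products agree), the universal property of the crossed product promotes this covariant pair to a $*$-homomorphism $\psi \colon B_{\varepsilon,\theta} \rtimes \mathbb{Z} \to \mathcal{S}_{\varepsilon,\theta}$ with $\psi(u) = \mathfrak{u}_{\varepsilon,\theta}$.

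To conclude, I would check $\psi \circ \phi = \mathrm{id}$ and $\phi \circ \psi = \mathrm{id}$ on generators: $\psi\phi(\mathfrak{u}_{\varepsilon,\theta}) = \mathfrak{u}_{\varepsilon,\theta}$, $\psi\phi(\mathfrak{v}_{\varepsilon,\theta}) = \mathfrak{v}_{\varepsilon,\theta}$, $\phi\psi(u) = u$, and $\phi\psi(x_n) = u^n x_0 u^{-n} = x_n$. The argument is essentially formal; there is no genuinely hard step, and the only thing needing care is the bookkeeping for the covariance relation that lets one pass from $\psi_0$ to $\psi$ on the crossed product.
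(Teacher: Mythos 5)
Your argument is correct and is the natural proof by universal properties; the paper itself simply cites Farsi (Proposition 2.2) without reproducing a proof, so there is no in-paper argument to compare against. All the key verifications are in order: the soft-rotation estimate for $(u,x_0)$, the bound $\|\psi_0(x_{n+1})-e^{2\pi i\theta}\psi_0(x_n)\|\leq\varepsilon$ coming from the single defining relation of $\mathcal{S}_{\varepsilon,\theta}$, the covariance $\psi_0\circ\alpha_{\varepsilon,\theta}=\mathfrak{u}_{\varepsilon,\theta}\,\psi_0(\cdot)\,\mathfrak{u}_{\varepsilon,\theta}^*$, and the checks on generators that $\phi$ and $\psi$ are mutually inverse (the generators $u$ and $\{x_n\}$ do generate the crossed product, and $\mathfrak{u}_{\varepsilon,\theta},\mathfrak{v}_{\varepsilon,\theta}$ generate $\mathcal{S}_{\varepsilon,\theta}$).

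One small remark: the appeal to amenability of $\mathbb{Z}$ is unnecessary for the step where you promote the covariant pair to $\psi$. What you are invoking is the universal property of the \emph{full} crossed product, which holds for any group and any $C^*$-dynamical system: a unital covariant pair $(\psi_0,\mathfrak{u}_{\varepsilon,\theta})$ in a unital $C^*$-algebra induces a unique $*$-homomorphism from the full crossed product. Amenability would only be needed if $B_{\varepsilon,\theta}\rtimes_{\alpha_{\varepsilon,\theta}}\mathbb{Z}$ were interpreted as the \emph{reduced} crossed product (and then it is relevant in the other direction too, to ensure $\phi$ lands in the reduced algebra); since $\mathbb{Z}$ is amenable the distinction evaporates, so no harm is done, but it is cleaner to state that you are using the full crossed product's universal property directly.
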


\begin{theorem}[Theorem 2.3 of \cite{Farsi}, Theorem 2.4 of \cite{Exel}]\label{soft k-theory}
Assume $0 \leq \varepsilon <2 $.
Let $z$ denote the canonical generator of $C^*$-algebra $C(\mathbb{T}).$ Identify $\mathcal{A}_\theta$ as the crossed product of $C(\T)$ by the action $\alpha$ of $\Z$ induced by
$\alpha (z) = e^{2\pi i \theta} z$. We have the following:

(1) Let $\psi_{\varepsilon}^{\theta}:B_{\varepsilon,\theta}\rightarrow C(\mathbb{T})$ be the unique homomorphism such that $\psi_{\varepsilon}^{\theta}(x_n)=e^{2n\pi i\theta}z$ for all $n\in \mathbb{Z}$.
Then $\psi_{\varepsilon}^{\theta}$ induces a homotopy equivalence between $B_{\varepsilon,\theta}$ and $C(\mathbb{T})$.

(2) Let $\varphi_{\varepsilon}^{\theta}$ be the homomorphism defined by
\[
\varphi_{\varepsilon}^{\theta}:\mathcal{S}_{\varepsilon,\theta}\rightarrow\mathcal{S}_{0,\theta}=\mathcal{A}_{\theta}, \quad\quad
\varphi_{\varepsilon}^{\theta}(\mathfrak{u}_{\varepsilon,\theta})=\mathfrak{u}_{0, \theta},\quad
\varphi_{\varepsilon}^{\theta}(\mathfrak{v}_{\varepsilon,\theta})=\mathfrak{v}_{0, \theta}.
\]
Then we have the following commutative diagram:

$$\xymatrix{
  0  \ar[r]^{} & K_0(B_{\ep,\theta}) \ar[d]_{\psi_{\ep_*}^{\theta}} \ar[r]^{} & K_0(\mathcal{S}_{\ep,\theta}) \ar[d]_{\varphi_{\ep_*}^{\theta}} \ar[r]^{\partial} & K_{1}(B_{\ep,\theta}) \ar[d]_{\psi_{\ep_*}^{\theta}} \ar[r]^{} & 0  \phantom{,}\\
  0 \ar[r]^{} & K_{0}(C(\mathbb{T})) \ar[r]^{} & K_0(\mathcal{A}_{\theta}) \ar[r]^{\partial} & K_{1}(C(\mathbb{T})) \ar[r]^{} & 0, }$$
where all vertical maps are isomorphisms
and all rows are derived from the Pimsner-Voiclescu exact sequences.
\end{theorem}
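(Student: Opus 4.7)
The plan is to first establish the homotopy equivalence claim (1), then deduce the diagram in (2) from the naturality of the Pimsner-Voiculescu six-term exact sequence together with the observation that both $\alpha$ and $\alpha_{\varepsilon,\theta}$ are homotopic to the identity.

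For (1), the map $\iota\colon C(\T)\to B_{\varepsilon,\theta}$ defined by $\iota(z)=x_0$ is a right inverse to $\psi_\varepsilon^{\theta}$, since $\psi_\varepsilon^{\theta}(x_0)=z$. To build a homotopy from $\iota\circ\psi_\varepsilon^{\theta}$ to $\id_{B_{\varepsilon,\theta}}$, I would exploit that $\|x_{n+1}-e^{2\pi i\theta}x_n\|\leq \varepsilon<2$ so that each unitary $w_n:=x_{n+1}(e^{2\pi i\theta}x_n)^{-1}$ has spectrum in the arc $\{e^{it}:|e^{it}-1|\leq\varepsilon\}\subsetneq\T$ and admits a continuous logarithm $w_n=e^{ih_n}$ with $h_n$ self-adjoint and $\|h_n\|<\pi$. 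Define $x_n^{(s)}$ for $s\in[0,1]$ inductively by $x_0^{(s)}=x_0$ and $x_{n+1}^{(s)}=e^{ish_n}e^{2\pi i\theta}x_n^{(s)}$ (with the analogous backward recursion for $n<0$). At $s=0$ we recover $x_n^{(0)}=e^{2n\pi i\theta}x_0=\iota(\psi_\varepsilon^{\theta}(x_n))$, at $s=1$ we recover $x_n^{(1)}=x_n$, and by spectral calculus $\|e^{ish_n}-1\|\leq\|e^{ih_n}-1\|\leq\varepsilon$ for all $s\in[0,1]$. Hence $\|x_{n+1}^{(s)}-e^{2\pi i\theta}x_n^{(s)}\|\leq\varepsilon$ uniformly, and the universal property of $B_{\varepsilon,\theta}$ yields a norm-continuous family $\phi_s\colon B_{\varepsilon,\theta}\to B_{\varepsilon,\theta}$ of $\ast$-homomorphisms with $\phi_0=\iota\circ\psi_\varepsilon^{\theta}$ and $\phi_1=\id$.

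For (2), by Proposition \ref{soft isomorphism} we identify $\mathcal{S}_{\varepsilon,\theta}$ with $B_{\varepsilon,\theta}\rtimes_{\alpha_{\varepsilon,\theta}}\Z$ and $\mathcal{A}_\theta$ with $C(\T)\rtimes_\alpha\Z$. The construction above also shows that $\alpha_{\varepsilon,\theta}$ is homotopic to $\id_{B_{\varepsilon,\theta}}$ through automorphisms (the path $x_n\mapsto e^{ish_n}e^{2\pi i\theta}x_n$ interpolates between sending $x_n$ to $x_{n+1}$ and fixing $x_n$ up to the scalar $e^{2\pi i\theta}$, and after composing with the homotopy from $e^{2\pi i\theta}\id$ to $\id$ one gets a full homotopy to the identity), and the rotation $\alpha$ on $C(\T)$ is visibly homotopic to the identity. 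Consequently $\id_*-\alpha_*=0$ on $K_0$ and $K_1$ of both base algebras, so the Pimsner-Voiculescu six-term sequences split into the two short exact rows of the diagram. Under the identification above $x_0\mapsto\mathfrak{u}_{\varepsilon,\theta}$ and $\mathfrak{u}_{0,\theta}=u\leftrightarrow z$, so $\varphi_\varepsilon^{\theta}$ is equivariant with respect to the two $\Z$-actions and restricts to $\psi_\varepsilon^{\theta}$ on the base subalgebra; by naturality of the Pimsner-Voiculescu construction, the displayed diagram commutes.

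The homotopy equivalence in (1) makes the two outer vertical maps isomorphisms on $K$-theory, and the five lemma then forces the middle map $\varphi_{\varepsilon*}^{\theta}$ to be an isomorphism as well. I expect the main technical obstacle to be the careful bookkeeping in the first step: verifying that the inductively constructed $x_n^{(s)}$ genuinely satisfy the defining norm bound uniformly in both $s$ and $n$ (including for negative $n$), and that the resulting path of $\ast$-homomorphisms is pointwise norm continuous. The essential point, which makes the estimate work, is that shrinking the argument of a unitary through a continuous branch of the logarithm only draws it closer to $1$, never further away.
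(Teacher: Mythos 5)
This theorem is cited in the paper (to Farsi and Exel) rather than proved, so the comparison is with the original arguments; your approach matches those in all essentials. Part (1) is correct: the recursive deformation $x_{n+1}^{(s)}=e^{ish_n}e^{2\pi i\theta}x_n^{(s)}$ keeps the universal norm constraint uniformly, since
\[
\|x_{n+1}^{(s)}-e^{2\pi i\theta}x_n^{(s)}\|=\|(e^{ish_n}-1)e^{2\pi i\theta}x_n^{(s)}\|=\|e^{ish_n}-1\|\le\|e^{ih_n}-1\|\le\varepsilon,
\]
and the resulting family of homomorphisms is pointwise norm-continuous because $*$-polynomials in the $x_n$ are dense and each $\phi_s$ is contractive. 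The naturality argument in (2) is also fine: $\psi_{\varepsilon}^{\theta}\circ\alpha_{\varepsilon,\theta}=\alpha\circ\psi_{\varepsilon}^{\theta}$ since both send $x_n$ to $e^{2(n+1)\pi i\theta}z$, and under the identification $\mathcal S_{\varepsilon,\theta}\cong B_{\varepsilon,\theta}\rtimes_{\alpha_{\varepsilon,\theta}}\mathbb{Z}$ the map $\varphi_{\varepsilon}^{\theta}$ is exactly the one induced by $\psi_{\varepsilon}^{\theta}$ and the implementing unitaries, so the diagram commutes by naturality of Pimsner--Voiculescu and the five lemma then forces the middle vertical arrow to be an isomorphism.

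The one genuine gap is the assertion that the \emph{non}-recursive path $\beta_s(x_n)=e^{ish_n}e^{2\pi i\theta}x_n$ defines endomorphisms of $B_{\varepsilon,\theta}$. Checking the defining relation yields
\[
\|\beta_s(x_{n+1})-e^{2\pi i\theta}\beta_s(x_n)\|=\|e^{ish_{n+1}}e^{ih_n}-e^{ish_n}\|=\|e^{ish_{n+1}}e^{i(1-s)h_n}-1\|,
\]
and the triangle inequality only bounds this by $\|e^{ish_{n+1}}-1\|+\|e^{i(1-s)h_n}-1\|\le 2\varepsilon$, which is not good enough; the sharper bound $\le\varepsilon$ for noncommuting $h_n,h_{n+1}$ would require a nontrivial angular estimate for products of unitaries that you do not establish. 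Fortunately you never actually need a homotopy of automorphisms. To split the Pimsner--Voiculescu sequence for $B_{\varepsilon,\theta}\rtimes_{\alpha_{\varepsilon,\theta}}\mathbb{Z}$ you only need $(\alpha_{\varepsilon,\theta})_*=\mathrm{id}$ on $K_*(B_{\varepsilon,\theta})$, and this follows immediately from part (1) together with the intertwining relation: since $\psi_{\varepsilon}^{\theta}\circ\alpha_{\varepsilon,\theta}=\alpha\circ\psi_{\varepsilon}^{\theta}$, $(\psi_{\varepsilon}^{\theta})_*$ is an isomorphism, and $\alpha_*=\mathrm{id}$ on $K_*(C(\mathbb{T}))$, one gets $(\alpha_{\varepsilon,\theta})_*=\mathrm{id}$ directly. (Alternatively, $K_1(B_{\varepsilon,\theta})=\mathbb{Z}[x_0]$ and $x_1=e^{ih_0}e^{2\pi i\theta}x_0$ is visibly homotopic to $x_0$ in $U(B_{\varepsilon,\theta})$, while $[1]$ is obviously fixed in $K_0$.) With this substitution the rest of your argument is correct.
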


\begin{definition}\label{dbot}
Let $\theta \in [0, 1)$. Let $b = b_{u, v} \in K_0(\mathcal{A}_\theta)$ be the class
of the Rieffel projection if $\theta \neq 0$ or the bott element if $\theta = 0$
(See Definition \ref{D_RieffelProj}).
We then define
$b_{\ep}^{\theta}$ the element in $K_0(\mathcal{S}_{\ep,\theta})$
to be the element
$b_{\ep}^{\theta}=(\varphi_{\ep_{*}}^{\theta})^{-1}(b),$
where $\varphi_{\varepsilon}^{\theta}$ is defined as in Theorem \ref{soft k-theory}.
\end{definition}

It follows immediately from the definition that $\partial(b_{\ep}^{\theta})=[x_0]$ in $K_{1}(B_{\ep,\theta}).$

\begin{definition}\label{dbotA}
Let $A$ be a unital $C^*$-algebra and let $u$ and $v$ be two unitaries in $A$ such that $\|uv-e^{2\pi i\theta}vu\|\leq \varepsilon<2$. There is a homomorphism $\phi_{u,v}^{\theta}:\mathcal{S}_{\varepsilon,\theta}\rightarrow A$ such that $\phi_{u,v}^{\theta}(\mathfrak{u}_{\varepsilon,\theta})=u$ and $\phi_{u,v}^{\theta}(\mathfrak{v}_{\varepsilon,\theta})=v$.
We define $b^{\theta}_{u,v}=(\phi_{u,v}^{\theta})_{*0}(b_{\varepsilon}^{\theta})$.  Note that $b^{\theta}_{u,v}$ does not depend on $\varepsilon$ as long as $2>\varepsilon >\|uv-e^{2\pi i\theta}vu\|$.
\end{definition}

\begin{proposition}Let $\theta_1,\theta_2\in [0,1)$, let $A$ be a unital $C^*$-algebra and let $u$ and $v$ be two unitaries in $A$. If $\|uv-e^{2\pi i\theta_1}vu\|<\frac{1}{2}$ and
 $\|uv-e^{2\pi i\theta_2}vu\|<\frac{1}{2}$. Then $b^{\theta_1}_{u,v}=b^{\theta_2}_{u,v}$ in $K_0(A).$
\end{proposition}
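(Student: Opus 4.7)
The plan is to cover $\mathcal{S}_{\varepsilon,\theta_1}$ and $\mathcal{S}_{\varepsilon,\theta_2}$ by a single $C^*$-algebra $\mathcal{E}$ fibred over a contractible arc in $\mathbb{T}$, to lift both classes $b^{\theta_1}_\varepsilon$ and $b^{\theta_2}_\varepsilon$ to a common class in $K_0(\mathcal{E})$, and then to invoke a homotopy argument to conclude that their images in $K_0(A)$ agree.

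First, the triangle inequality gives
\[
|e^{2\pi i\theta_1}-e^{2\pi i\theta_2}| \;=\; \|(e^{2\pi i\theta_1}-e^{2\pi i\theta_2})vu\| \;\le\; \|uv-e^{2\pi i\theta_1}vu\|+\|uv-e^{2\pi i\theta_2}vu\| \;<\; 1,
\]
so I pick a short-arc path $\gamma\colon[0,1]\to\mathbb{T}$ from $e^{2\pi i\theta_1}$ to $e^{2\pi i\theta_2}$ lying in an arc $I\subset\mathbb{T}$ of length less than $1$, together with a continuous branch $\theta\colon I\to\mathbb{R}$ satisfying $\lambda=e^{2\pi i\theta(\lambda)}$. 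For every $t\in[0,1]$, $\|uv-\gamma(t)vu\|<3/2$, so I fix any $\varepsilon\in(3/2,2)$. Let $\mathcal{E}$ be the universal $C^*$-algebra generated by unitaries $x,y$ and a central unitary $w$ with $\mathrm{sp}(w)\subseteq I$ subject to $\|xy-wyx\|\le\varepsilon$. Then $\mathcal{E}$ is a $C(I)$-algebra whose fiber at $\lambda\in I$ is $\mathcal{S}_{\varepsilon,\theta(\lambda)}$, and I write $\mathrm{ev}_\lambda\colon\mathcal{E}\to\mathcal{S}_{\varepsilon,\theta(\lambda)}$ for the evaluation map.

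The crucial technical step is to produce $\mathbf{b}\in K_0(\mathcal{E})$ with $(\mathrm{ev}_\lambda)_*(\mathbf{b})=b_\varepsilon^{\theta(\lambda)}$ for every $\lambda\in I$. I intend to decompose $\mathcal{E}=\tilde{B}\rtimes\mathbb{Z}$ in the spirit of Proposition \ref{soft isomorphism}, where $\tilde{B}$ is generated by $w$ and a family of unitaries $\{z_n\}_{n\in\mathbb{Z}}$ subject to $\|z_{n+1}-wz_n\|\le\varepsilon$, and then extend Farsi's argument (Theorem \ref{soft k-theory}(1)) by showing that the parametrized map $\tilde{\psi}\colon\tilde{B}\to C(I)\otimes C(\mathbb{T})$ given by $w\mapsto W,\ z_n\mapsto W^nZ$ is a homotopy equivalence. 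Contractibility of $I$ then yields $K_*(\tilde{B})\cong K_*(C(\mathbb{T}))$, and the naturality of the Pimsner-Voiculescu exact sequence singles out a class $\mathbf{b}\in K_0(\mathcal{E})$ with $\partial(\mathbf{b})=[z_0]\in K_1(\tilde{B})$ whose fiberwise evaluations are precisely the $b_\varepsilon^{\theta(\lambda)}$.

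Granted $\mathbf{b}$, for each $\lambda\in I$ define $\Phi_\lambda\colon\mathcal{E}\to A$ by $x\mapsto u,\,y\mapsto v,\,w\mapsto\lambda\cdot 1_A$; this is well-defined because $\|uv-\lambda vu\|\le\varepsilon$, and it factors as $\phi^{\theta(\lambda)}_{u,v}\circ\mathrm{ev}_\lambda$, giving $\Phi_{\lambda*}(\mathbf{b})=b^{\theta(\lambda)}_{u,v}$. The family $\{\Phi_{\gamma(t)}\}_{t\in[0,1]}$ is a norm-continuous homotopy of $\ast$-homomorphisms from $\Phi_{\gamma(0)}$ to $\Phi_{\gamma(1)}$, so they induce the same map on $K_0$, whence
\[
b^{\theta_1}_{u,v}=\Phi_{\gamma(0)*}(\mathbf{b})=\Phi_{\gamma(1)*}(\mathbf{b})=b^{\theta_2}_{u,v}.
\]
The hard part will be the parametrized homotopy equivalence for $\tilde{B}$: I will need to exhibit an explicit unitary path inside $\tilde{B}$ from $z_n$ to $w^nz_0$ that preserves the soft relation $\|z_{n+1}-wz_n\|\le\varepsilon$ throughout, generalizing the single-$\theta$ argument behind Theorem \ref{soft k-theory}(1).
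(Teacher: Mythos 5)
Your route (a $C(I)$-algebra $\mathcal{E}$ interpolating the soft tori along the arc, a global class $\mathbf{b}\in K_0(\mathcal{E})$, and homotopy invariance applied to the path $t\mapsto\Phi_{\gamma(t)}$) is genuinely different from the paper's, and its final homotopy step is fine: once a class $\mathbf{b}$ with $(\mathrm{ev}_{\lambda})_*(\mathbf{b})=b^{\theta(\lambda)}_{\varepsilon}$ at the two endpoints is in hand, the factorization $\Phi_\lambda=\phi^{\theta(\lambda)}_{u,v}\circ\mathrm{ev}_\lambda$ and point-norm continuity of $t\mapsto\Phi_{\gamma(t)}$ do give the conclusion. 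The genuine gap is exactly the step you defer. First, the parametrized version of Theorem \ref{soft k-theory}(1) for $\tilde B$ is only announced, not proved. Second, and more seriously, even granting it, the condition $\partial(\mathbf{b})=[z_0]$ does not ``single out'' the class: in the Pimsner--Voiculescu sequence the kernel of $\partial$ is the image of $K_0(\tilde B)$, so $\mathbf{b}$ is determined only up to multiples of $[1_{\mathcal{E}}]$, and naturality at a fiber only yields $(\mathrm{ev}_\lambda)_*(\mathbf{b})=b^{\theta(\lambda)}_{\varepsilon}+m_\lambda[1]$ for some integer $m_\lambda$. You can normalize $m_\lambda=0$ at one endpoint by adjusting $\mathbf{b}$, but showing it vanishes at the other endpoint (equivalently, that $m_\lambda$ is constant along the arc) is precisely the content of the proposition and is not supplied; note also that the functional-calculus comparison via the Rieffel element (Proposition \ref{P_RieffelEConst}) is unavailable here because your $\varepsilon>\frac{3}{2}$, so the discrepancy $m[1]$ cannot be dismissed by a soft perturbation argument. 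As it stands the proposal is a program whose central lemma is missing.

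For comparison, the paper avoids continuous fields and any new $K$-theory computation. Since $|e^{2\pi i\theta_1}-e^{2\pi i\theta_2}|<1$, it fixes a single intermediate $\theta$ with $|e^{2\pi i\theta_j}-e^{2\pi i\theta}|<1$ for $j=1,2$; then the canonical generators of $\mathcal{S}_{\varepsilon_j,\theta_j}$ satisfy the $\theta$-soft relation within $\frac{3}{2}<2$, giving canonical surjections $\mathcal{S}_{\frac{3}{2},\theta}\to\mathcal{S}_{\varepsilon_j,\theta_j}$, and by the universal property $\phi^{\theta_j}_{u,v}$ composed with these surjections is $\phi^{\theta}_{u,v}$, so both sides are computed as images of the single class $b^{\theta}_{3/2}$ under $(\phi^{\theta}_{u,v})_*$ as in Definition \ref{dbotA}. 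In effect, the paper replaces your one-parameter family by one auxiliary soft torus at a midpoint angle; if you wish to salvage your approach, the fiberwise compatibility of $\mathbf{b}$ with $b^{\theta(\lambda)}_{\varepsilon}$ is the statement you must actually prove, and it is of the same nature as (and no easier than) the proposition itself.
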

\begin{proof}  By $\|uv-e^{2\pi i\theta_1}vu\|<\frac{1}{2}$ and
 $\|uv-e^{2\pi i\theta_2}vu\|<\frac{1}{2}$, we have $$|e^{2\pi i\theta_1}-e^{2\pi i\theta_2}|\leq \|uvu^*v^*-e^{2\pi i\theta_1}\|+\|uvu^*v^*-e^{2\pi i\theta_2}\|<1.$$
 So we can find $\theta\in \mathbb{R}$ such that $|e^{2\pi i\theta_1}-e^{2\pi i\theta}|=|e^{2\pi i\theta_2}-e^{2\pi i\theta}|<1.$
 Let $\|uv-e^{2\pi i\theta_1}vu\|=\varepsilon_1$ and $\|uv-e^{2\pi i\theta_2}vu\|=\varepsilon_2$.
We have  $$\|\mathfrak{u}_{\varepsilon_1,\theta_1}\mathfrak{v}_{\varepsilon_1,\theta_1}-e^{2\pi i\theta}\mathfrak{v}_{\varepsilon_1,\theta_1}\mathfrak{u}_{\varepsilon_1,\theta_1}\|\leq \|\mathfrak{u}_{\varepsilon_1,\theta_1}\mathfrak{v}_{\varepsilon_1,\theta_1}-e^{2\pi i\theta_1}\mathfrak{v}_{\varepsilon_1,\theta_1}\mathfrak{u}_{\varepsilon_1,\theta_1}\|+|e^{2\pi i\theta_1}-e^{2\pi i\theta}|\leq \varepsilon_1+1<\frac{3}{2}.$$
 Similarly,
 $$\|\mathfrak{u}_{\varepsilon_2,\theta_2}\mathfrak{v}_{\varepsilon_2,\theta_2}-e^{2\pi i\theta}\mathfrak{v}_{\varepsilon_2,\theta_2}\mathfrak{u}_{\varepsilon_2,\theta_2}\|<\frac{3}{2}.$$
 Thus we have a surjective homomorphism $\phi_{\mathfrak{u}_{\varepsilon_1,\theta_1},\mathfrak{v}_{\varepsilon_1,\theta_1}}$ from $\mathcal{S}_{\frac{3}{2},\theta}$ to
 $\mathcal{S}_{\varepsilon_1,\theta_1}$ and a surjective homomorphism $\phi_{\mathfrak{u}_{\varepsilon_2,\theta_2},\mathfrak{v}_{\varepsilon_2,\theta_2}}$ from $\mathcal{S}_{\frac{3}{2},\theta}$ to
 $\mathcal{S}_{\varepsilon_2,\theta_2}$ such that
 $$\phi^{\theta}_{\mathfrak{u}_{\varepsilon_1,\theta_1},\mathfrak{v}_{\varepsilon_1,\theta_1}}(\mathfrak{u}_{\frac{3}{2},\theta})=\mathfrak{u}_{\varepsilon_1,\theta_1},\,\,\,\,\,
 \phi^{\theta}_{\mathfrak{u}_{\varepsilon_1,\theta_1},\mathfrak{v}_{\varepsilon_1,\theta_1}}(\mathfrak{v}_{\frac{3}{2},\theta})=\mathfrak{v}_{\varepsilon_1,\theta_1}$$
 and
 $$\phi^{\theta}_{\mathfrak{u}_{\varepsilon_2,\theta_2},\mathfrak{v}_{\varepsilon_2,\theta_2}}(\mathfrak{u}_{\frac{3}{2},\theta})=\mathfrak{u}_{\varepsilon_2,\theta_2},\,\,\,\,\,
 \phi^{\theta}_{\mathfrak{u}_{\varepsilon_2,\theta_2},\mathfrak{v}_{\varepsilon_2,\theta_2}}(\mathfrak{v}_{\frac{3}{2},\theta})=\mathfrak{v}_{\varepsilon_2,\theta_2}.$$

So \beq
b^{\theta_1}_{u,v}&=&(\phi_{u,v}^{\theta_1})_*(b_{\varepsilon_1}^{\theta_1})=(\phi_{u,v}^{\theta_1})_*\circ(\phi^{\theta}_{\mathfrak{u}_{\varepsilon_1,\theta_1},\mathfrak{v}_{\varepsilon_1,\theta_1}}(\mathfrak{u}_{\frac{3}{2},\theta}))_*(b_{\frac{3}{2}}^{\theta})
\nonumber\\&=&(\phi_{u,v}^{\theta})_*(b_{\frac{3}{2}}^{\theta})\nonumber\\
&=&(\phi_{u,v}^{\theta_2})_*\circ(\phi^{\theta}_{\mathfrak{u}_{\varepsilon_2,\theta_2},\mathfrak{v}_{\varepsilon_2,\theta_2}}(\mathfrak{u}_{\frac{3}{2},\theta}))_*(b_{\frac{3}{2}}^{\theta})\nonumber\\
&=&(\phi_{u,v}^{\theta_2})_*(b_{\varepsilon_2}^{\theta_2})\nonumber\\
&=&b^{\theta_2}_{u,v}.\nonumber
\eneq
\end{proof}

In case $\theta = 0$, there is a more concrete construction of the $K_0$ element $b^{0}_{u,v}$ which is called the bott element of $u$ and $v$.
The interested reader are referred to \cite{Lor}, \cite{ExL1}, \cite{ExL2} and \cite{Hua-Lin} for the definition and more details about the bott element.
For $\theta \neq 0$, we shall give a similar construction for $b^{\theta}_{u, v}$.

\begin{definition}
Let $\theta \in (0, 1)$. Let $A$ be a unital $C^*$-algebra. Let $u, v$ be a pair of unitaries in $A$. We define $e^{\theta}(u, v)$ to
be the element $g(u)v^* + f(u) + vg(u)$, where $f, g$ are the functions on the circle
defined as in Definition \ref{def of Rieffel P} (used to define the Rieffel projection in $\mathcal{A}_\theta$). In particular, $e^{\theta}(u, v)$ is a projection if $u$ and $v$ satisfy $uv=e^{2\pi i\theta}vu.$
\end{definition}
It is clear that $e^{\theta}(u, v)$ is always self-adjoint.

\begin{proposition}\label{P_RieffelEgap}
Let $\theta \in (0, 1)$. There exists a $\delta > 0$ such that, for any unital $C^*$-algebra $A$,
any pair of unitaries $u, v$ in $A$, if $u$ and $v$ satisfy 
$\|uv - e^{2\pi i \theta}vu\| < \delta$,
then
\[
\|e^{\theta}(u, v)^2 - e^{\theta}(u, v)\| < \frac{1}{4}.
\]
In particular, ${\rm sp}(e^{\theta}(u, v))$ has a gap at $\frac{1}{2}$.
\end{proposition}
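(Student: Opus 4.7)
The plan is to argue by compactness/contradiction, leveraging the fact that the Rieffel formula produces an honest projection whenever $uv = e^{2\pi i \theta}vu$ holds exactly. The key preliminary observation is that $e^{\theta}(u, v) = g(u) v^* + f(u) + v g(u)$ depends jointly and norm-continuously on the pair of unitaries $(u, v)$: continuous functional calculus of a unitary is norm-continuous in the unitary, and $e^{\theta}(u, v)$ is a polynomial expression in $u$, $u^*$, $v$, $v^*$, $f(u)$, $g(u)$.

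Suppose the proposition fails for some fixed $\theta \in (0, 1)$. Then there exist unital $C^*$-algebras $A_n$ and pairs of unitaries $u_n, v_n \in A_n$ with $\|u_n v_n - e^{2 \pi i \theta} v_n u_n\| \to 0$ while $\|e^{\theta}(u_n, v_n)^2 - e^{\theta}(u_n, v_n)\| \geq 1/4$ for every $n$. I would form the quotient $C^*$-algebra $B := \prod_n A_n / \bigoplus_n A_n$ and let $u, v \in B$ denote the images of $(u_n)_n$ and $(v_n)_n$. These are unitaries satisfying $uv = e^{2\pi i \theta} vu$ exactly. By the universal property of $\mathcal{A}_{\theta}$, there is a unital $*$-homomorphism $\pi \colon \mathcal{A}_{\theta} \to B$ sending the canonical generators to $u$ and $v$; since $*$-homomorphisms intertwine continuous functional calculus, $\pi$ maps the Rieffel projection $g(u) v^* + f(u) + v g(u) \in \mathcal{A}_{\theta}$ precisely to $e^{\theta}(u, v) \in B$, which is therefore a genuine projection. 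Hence $e^{\theta}(u, v)^2 = e^{\theta}(u, v)$ in $B$; translating this back to $\prod_n A_n$ yields $\|e^{\theta}(u_n, v_n)^2 - e^{\theta}(u_n, v_n)\| \to 0$, contradicting the assumption.

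For the spectral-gap statement I would use that $e^{\theta}(u, v)$ is self-adjoint (because $f$ and $g$ are real-valued) together with the identity $(e^{\theta}(u, v) - 1/2)^2 = e^{\theta}(u, v)^2 - e^{\theta}(u, v) + 1/4$. The bound $\|e^{\theta}(u, v)^2 - e^{\theta}(u, v)\| < 1/4$ forces the self-adjoint positive element $(e^{\theta}(u, v) - 1/2)^2$ to have spectrum bounded below by a positive constant, hence it is invertible, so $1/2 \notin \mathrm{sp}(e^{\theta}(u, v))$.

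The argument is essentially routine; the only subtle point is that continuous functional calculus is compatible with the quotient $\prod / \bigoplus$, which is the standard fact that $*$-homomorphisms commute with functional calculus of normal elements. A more computational alternative would be to expand $e^{\theta}(u, v)^2$ term by term, apply properties (1)-(3) of $f$ and $g$ from Definition \ref{def of Rieffel P} to verify that the formal answer collapses to $e^{\theta}(u, v)$ when $u$ and $v$ exactly satisfy the rotation relation, and then estimate each residual commutator using a bound of the form $\|v \phi(u) - \phi(e^{-2\pi i \theta} u) v\| \leq C(\phi)\delta$, with $C(\phi)$ depending only on the modulus of continuity of $\phi \in \{f, g\}$.
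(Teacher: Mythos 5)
Your proposal is correct and follows essentially the same route as the paper: a contradiction argument passing to the quotient $\prod_m A_m / \bigoplus_m A_m$, where the limiting unitaries satisfy the exact rotation relation, so the universal property of $\mathcal{A}_\theta$ and compatibility of $*$-homomorphisms with functional calculus force $e^{\theta}(u,v)$ to be a projection, contradicting the assumed lower bound. Your added justification of the spectral-gap claim via $(e^{\theta}(u,v)-\tfrac{1}{2})^2 = e^{\theta}(u,v)^2 - e^{\theta}(u,v) + \tfrac{1}{4}$ is also correct.
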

\begin{proof}
Suppose the statement is false. Let $(\delta_m)_{m=1}^{\infty}$ be a sequence of positive numbers decreasing to $0$.
Then for any positive integer $m$, there exists a unital $C^*$-algebra $A_m$,
a pair of unitaries $\tilde{u}_m, \tilde{v}_m$ in $A_m$ such that $ \|\tilde{u}_m \tilde{v}_m - e^{2\pi  i \theta} \tilde{v}_m\tilde{u}_m\| < \delta_m$,
but
\[
\|e^{\theta}(\tilde{u}_m, \tilde{v}_m)^2 - e^{\theta}(\tilde{u}_m, \tilde{v}_m)\| \geq \frac{1}{4}.
\]
Let $B = \prod_{m=1}^{\infty} A_m \slash \oplus_{m=1}^{\infty} A_m$.
Let $\pi \colon \prod_{m=1}^{\infty} A_m \rightarrow B$ be the canonical quotient map.
Let $\tilde{u}=(\tilde{u}_1, \tilde{u}_2, \dots)$ and $\tilde{v} = (\tilde{v}_1, \tilde{v}_2, \dots)$. Then $\pi(\tilde{u}), \pi(\tilde{v})$ sastifies
the exact rotation relation $\pi(\tilde{u})\pi(\tilde{v})=e^{2\pi i\theta}\pi(\tilde{v})\pi(\tilde{u})$, hence there is a homomorphism
\[
\phi \colon \mathcal{A}_\theta \rightarrow B, \quad \phi(u_1) = \pi(\tilde{u})\,\,{\rm and}\,\,\phi(u_2) = \pi(\tilde{v}),
\]
where $u_1,u_2$ are the canonical generators of $\mathcal{A}_\theta$.
In particular, the element
\[
\pi\big(e^{\theta}(\tilde{u}_1, \tilde{v}_1), e^{\theta}(\tilde{u}_2, \tilde{v}_2), \dots \big) = \pi(e(\tilde{u}, \tilde{v})) = \phi(e(u_1,u_2))
\]
is a projection. But this implies that
\[
\lim_{m \rightarrow \infty} \|e^{\theta}(\tilde{u}_m, \tilde{v}_m)^2 - e^{\theta}(\tilde{u}_m, \tilde{v}_m)\| = 0,
\]
which is a contradiction.
\end{proof}

\begin{definition}\label{D_RieffelE}
Let $\theta \in (0, 1)$. Let $\delta > 0$ be chosen as in Proposition \ref{P_RieffelEgap}. 
Let $A$ be a unital $C^*$-algebra
and let $u, v$ be a pair of unitaries in $A$ such that $ \|uv - e^{2\pi i \theta}vu\| < \delta$.
Let $\chi_{(\frac{1}{2}, \infty)}$ be the characteristic function on $(\frac{1}{2}, \infty)$.
We define $R^{\theta}(u, v) =\chi_{(\frac{1}{2}, \infty )}(e^{\theta}(u, v))$ and call it the \emph{Rieffel element} of $u$ and $v$ with respect to $\theta$.
\end{definition}

The following is a straightforward application of functional calculus:
\begin{proposition}\label{P_RieffelEConst}
Let $\theta \in (0, 1)$. Let $\delta > 0$ be chosen as in Proposition \ref{P_RieffelEgap}. 
Let $u, v$ be a pair of unitaries in a unital C*-algebra $A$
such that $ \|uv - e^{2\pi i \theta}vu\| < \delta$.
Then $b_{u, v}^{\theta} = [R^{\theta}(u, v)]$. In particular, if $\ep < \delta$,
then $b_{\ep}^{\theta} = [R^{\theta}(\mathfrak{u}_{\varepsilon,\theta}, \mathfrak{v}_{\varepsilon,\theta})]$
in $K_0(\mathcal{S}_{\ep, \theta})$.
\end{proposition}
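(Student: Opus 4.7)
The plan is to reduce the general case to a computation inside the soft rotation algebra $\mathcal{S}_{\varepsilon,\theta}$ via naturality, and then match that with the classical Rieffel projection in $\mathcal{A}_{\theta}$. Pick any $\varepsilon$ with $\|uv - e^{2\pi i \theta}vu\| \le \varepsilon < \delta$. By Definition \ref{dbotA}, the homomorphism $\phi_{u,v}^{\theta} \colon \mathcal{S}_{\varepsilon,\theta} \to A$ sending $\mathfrak{u}_{\varepsilon,\theta} \mapsto u$ and $\mathfrak{v}_{\varepsilon,\theta} \mapsto v$ satisfies $b_{u,v}^{\theta} = (\phi_{u,v}^{\theta})_{*0}(b_{\varepsilon}^{\theta})$, while by Definition \ref{dbot}, $b_{\varepsilon}^{\theta} = (\varphi_{\varepsilon*}^{\theta})^{-1}(b)$, where $b \in K_0(\mathcal{A}_{\theta})$ is the class of the Rieffel projection.

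The key observation is that the defining formula $e^{\theta}(x,y) = g(x)y^{*} + f(x) + y g(x)$ uses only continuous functional calculus on the unitary $x$ together with a polynomial in $x, g(x), y, y^{*}$. Hence for any $*$-homomorphism $\psi$ one has $\psi(e^{\theta}(x,y)) = e^{\theta}(\psi(x), \psi(y))$. By Proposition \ref{P_RieffelEgap}, for every pair of unitaries $(x,y)$ appearing in the argument the self-adjoint element $e^{\theta}(x,y)$ has spectrum contained in $[-\|e^{\theta}\|, 1/3] \cup [2/3, \|e^{\theta}\|]$, so on these spectra the discontinuous function $\chi_{(1/2,\infty)}$ agrees with a fixed continuous function $\tilde{\chi}$ (the unique bump equal to $0$ on $[-\infty,1/3]$ and $1$ on $[2/3,\infty]$, extended linearly). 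Thus $R^{\theta}(x,y) = \tilde{\chi}(e^{\theta}(x,y))$, and combining with the above identity we obtain $\psi(R^{\theta}(x,y)) = R^{\theta}(\psi(x), \psi(y))$ whenever $\psi$ is a $*$-homomorphism and both sides have the required spectral gap.

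Now apply this naturality to $\varphi_{\varepsilon}^{\theta} \colon \mathcal{S}_{\varepsilon,\theta} \to \mathcal{A}_{\theta}$: the images of $\mathfrak{u}_{\varepsilon,\theta}, \mathfrak{v}_{\varepsilon,\theta}$ are the exact generators $u_1,u_2$ of $\mathcal{A}_{\theta}$, for which $e^{\theta}(u_1,u_2)$ is already a genuine projection (by the defining identities $(1)$–$(3)$ of $f,g$), so $R^{\theta}(u_1,u_2)=e^{\theta}(u_1,u_2)$ is precisely the Rieffel projection, representing $b$. Hence $(\varphi_{\varepsilon*}^{\theta})[R^{\theta}(\mathfrak{u}_{\varepsilon,\theta}, \mathfrak{v}_{\varepsilon,\theta})]=b$, and since $\varphi_{\varepsilon*}^{\theta}$ is an isomorphism (Theorem \ref{soft k-theory}), $[R^{\theta}(\mathfrak{u}_{\varepsilon,\theta}, \mathfrak{v}_{\varepsilon,\theta})] = b_{\varepsilon}^{\theta}$, establishing the ``in particular'' clause. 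Applying the same naturality to $\phi_{u,v}^{\theta}$ then yields $[R^{\theta}(u,v)] = (\phi_{u,v}^{\theta})_{*0}[R^{\theta}(\mathfrak{u}_{\varepsilon,\theta}, \mathfrak{v}_{\varepsilon,\theta})] = (\phi_{u,v}^{\theta})_{*0}(b_{\varepsilon}^{\theta}) = b_{u,v}^{\theta}$. The only subtlety is the use of the discontinuous $\chi_{(1/2,\infty)}$, but this is rendered harmless by the uniform spectral gap provided by Proposition \ref{P_RieffelEgap}; no genuine obstacle arises.
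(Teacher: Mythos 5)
Your proof is correct and takes precisely the approach the paper intends; the authors state only that the proposition is ``a straightforward application of functional calculus'' and write out no further details, so your argument supplies exactly the missing naturality computation, namely $\psi(e^{\theta}(x,y)) = e^{\theta}(\psi(x),\psi(y))$ for $*$-homomorphisms together with the spectral gap from Proposition~\ref{P_RieffelEgap} to promote this to $\psi(R^{\theta}(x,y)) = R^{\theta}(\psi(x),\psi(y))$, after which the commuting triangle $\mathcal{S}_{\ep,\theta}\to\mathcal{A}_{\theta}$, $\mathcal{S}_{\ep,\theta}\to A$ gives both identities. One small numerical inaccuracy: $\|a^2-a\|<\tfrac14$ does \emph{not} force $\mathrm{sp}(a)\subset[-\|a\|,\tfrac13]\cup[\tfrac23,\|a\|]$ (at $t=\tfrac13$ one has $|t^2-t|=\tfrac29<\tfrac14$); what it forces is that $\mathrm{sp}(a)$ avoids the open interval $\bigl(\tfrac12-\eta,\tfrac12+\eta\bigr)$ with $\eta=\sqrt{\tfrac14-\|a^2-a\|}>0$, but since you only need, for each fixed $a=e^{\theta}(x,y)$, a continuous $\tilde{\chi}$ agreeing with $\chi_{(1/2,\infty)}$ on $\mathrm{sp}(a)$ and hence also on $\mathrm{sp}(\psi(a))\subset\mathrm{sp}(a)$, a per-element gap suffices and the argument is unaffected.
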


\begin{lemma}\label{P_RieffelProjsame}
Let $u$ and $v$ be two unitaries in a unital C*-algeba $A$. 
Suppose $\|uv-e^{2\pi i\theta_1}vu\|< 2$. Then there is 
a $\delta > 0$ such that, whenever $\theta_2$ is a real number
such that $\| \theta_2 - \theta_1\| < \delta$, we have
$\|uv-e^{2\pi i\theta_2}vu\|< 2$ 
and 
$b^{\theta_1}_{u,v}=b^{\theta_2}_{u,v}$ in $K_0(A).$
\end{lemma}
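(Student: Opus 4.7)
The first assertion is immediate from the triangle inequality
$\|uv-e^{2\pi i\theta_2}vu\|\leq\|uv-e^{2\pi i\theta_1}vu\|+|e^{2\pi i\theta_1}-e^{2\pi i\theta_2}|$,
so any $\delta$ small enough that $2\pi\delta<2-\|uv-e^{2\pi i\theta_1}vu\|$ works.

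For the $K$-theoretic identity, the plan is to imitate the preceding proposition but with a common source soft rotation algebra based at $\theta_1$, so as to accommodate the weaker assumption $\|uv-e^{2\pi i\theta_1}vu\|<2$. Set $\varepsilon_j=\|uv-e^{2\pi i\theta_j}vu\|$ and, shrinking $\delta$ further, pick $\varepsilon'$ with $\max(\varepsilon_1,\,\varepsilon_2+|e^{2\pi i\theta_1}-e^{2\pi i\theta_2}|)<\varepsilon'<2$. The universal property of $\mathcal{S}_{\varepsilon',\theta_1}$ supplies surjections $\rho_j\colon\mathcal{S}_{\varepsilon',\theta_1}\to\mathcal{S}_{\varepsilon_j,\theta_j}$ sending generators to generators, and the two compositions $\phi^{\theta_j}_{u,v}\circ\rho_j\colon\mathcal{S}_{\varepsilon',\theta_1}\to A$ coincide because both send the generators to $u,v$. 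It therefore suffices to verify $(\rho_j)_*(b^{\theta_1}_{\varepsilon'})=b^{\theta_j}_{\varepsilon_j}$ for $j=1,2$; both $b^{\theta_j}_{u,v}$ then arise as the common image.

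The case $j=1$ is routine: $\rho_1$ lies between two soft rotation algebras indexed by the same $\theta_1$, so the identity $\varphi^{\theta_1}_{\varepsilon'}=\varphi^{\theta_1}_{\varepsilon_1}\circ\rho_1$ together with Theorem \ref{soft k-theory} (each $\varphi$ induces a $K_0$-isomorphism) immediately yields the claim. For $j=2$, we apply $(\varphi^{\theta_2}_{\varepsilon_2})_*$ on both sides; the composition $\varphi^{\theta_2}_{\varepsilon_2}\circ\rho_2\colon\mathcal{S}_{\varepsilon',\theta_1}\to\mathcal{A}_{\theta_2}$ sends the generators to the canonical generators $u_{\theta_2},v_{\theta_2}$ of $\mathcal{A}_{\theta_2}$, hence equals $\phi^{\theta_1}_{u_{\theta_2},v_{\theta_2}}$ by universality. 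The desired identity thus reduces to
\[
b^{\theta_1}_{u_{\theta_2},v_{\theta_2}}=b^{\theta_2}_{u_{\theta_2},v_{\theta_2}}=b_{\theta_2}\quad\text{in }K_0(\mathcal{A}_{\theta_2}),
\]
that is, the lemma in the special case where the pair exactly satisfies the $\theta_2$ rotation relation.

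The main obstacle is this reduced statement, but now $\|u_{\theta_2}v_{\theta_2}-e^{2\pi i\theta_1}v_{\theta_2}u_{\theta_2}\|=|e^{2\pi i\theta_1}-e^{2\pi i\theta_2}|$ is bounded by $2\pi\delta$, which by shrinking $\delta$ further is placed below the threshold $\delta_0$ of Proposition \ref{P_RieffelEgap}. Proposition \ref{P_RieffelEConst} then represents both sides as $[\chi_{(1/2,\infty)}(e^{\theta_j}(u_{\theta_2},v_{\theta_2}))]$. Choosing a single auxiliary ramp parameter $\eta$ for the Rieffel functions $f_\theta,g_\theta$ of Definition \ref{def of Rieffel P} (valid once $\theta_1,\theta_2$ both lie in some interior subinterval $[c,1-c]$), an elementary estimate yields $\|f_{\theta_1}-f_{\theta_2}\|_\infty,\,\|g_{\theta_1}-g_{\theta_2}\|_\infty=O(\delta/\eta)$, forcing $\|e^{\theta_1}(u_{\theta_2},v_{\theta_2})-e^{\theta_2}(u_{\theta_2},v_{\theta_2})\|$ to be small; since each element has a spectral gap at $\tfrac{1}{2}$, the spectral projections above $\tfrac{1}{2}$ are norm close and hence Murray--von Neumann equivalent, completing the proof. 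The boundary case $\theta_j=0$ requires a separate argument treating the bott element as a limit of Rieffel projections, but the idea is the same continuity.
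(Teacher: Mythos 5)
Your main line of argument is correct for $\theta_1\in(0,1)$, and on the crucial point it is actually more explicit than the paper's own proof. The paper also funnels everything through one soft rotation algebra, but based at the midpoint $\theta=(\theta_1+\theta_2)/2$ with tolerance $\frac{2+2\varepsilon_1}{3}$, and then concludes by a chain of $K_0$-identities; the first and last links of that chain take for granted that the canonical surjections send the distinguished class $b^{\theta}_{\varepsilon}$ to $b^{\theta_j}_{\varepsilon_j}$, which is exactly the compatibility you isolate as $(\rho_j)_*(b^{\theta_1}_{\varepsilon'})=b^{\theta_j}_{\varepsilon_j}$ and, for $j=2$, reduce to $b^{\theta_1}_{u_{\theta_2},v_{\theta_2}}=b_{\theta_2}$ in $K_0(\mathcal{A}_{\theta_2})$. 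Your verification of this reduced statement via Proposition \ref{P_RieffelEConst} and norm-closeness of the Rieffel elements is where the real content lies, so your route is a genuine refinement rather than a mere variant. Three small remarks there: the estimate for $g$ involves a square root, so the bound is of order $\sqrt{\delta/\eta}$ rather than $\delta/\eta$ (harmless); the existence of a common ramp parameter uses that $\delta$ is allowed to depend on $\theta_1$ (e.g.\ $\delta<\min\{\theta_1,1-\theta_1\}/2$), which the statement permits; and the comparison of spectral projections is clean precisely because $R^{\theta_2}(u_{\theta_2},v_{\theta_2})$ is an exact projection, so closeness of the two elements $e^{\theta_j}(u_{\theta_2},v_{\theta_2})$ does force closeness, hence equivalence, of the projections.

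The genuine gap is the boundary case $\theta_1=0$ (and symmetrically $\theta_2=0$), which is within the scope of the lemma since $b^{0}_{u,v}$ is by definition the bott element (Definitions \ref{D_RieffelProj} and \ref{dbot}), and which the paper's abstract chain at least formally covers uniformly. Your closing remark that one treats ``the bott element as a limit of Rieffel projections'' by ``the same continuity'' does not go through as stated: when $\theta_2\to 0^{+}$ the ramp parameter in Definition \ref{def of Rieffel P} must satisfy $\epsilon\le\theta_2$, so no common ramp with $\theta_1=0$ exists and the functions $f_{\theta_2},g_{\theta_2}$ have no sup-norm limit; moreover the bott element is a formal difference built from a $2\times 2$ almost-projection, so it cannot be compared to the single positive element $R^{\theta_2}(u_{\theta_2},v_{\theta_2})$ by any norm estimate. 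What is needed is a $K$-theoretic argument: for instance, naturality of the Pimsner--Voiculescu sequences for the canonical map $\mathcal{S}_{\varepsilon,0}\to\mathcal{A}_{\theta_2}$ (which carries $B_{\varepsilon,0}$ into $C(\mathbb{T})$ equivariantly, $x_0\mapsto z$) shows that $b^{0}_{u_{\theta_2},v_{\theta_2}}$ and $b_{\theta_2}$ have the same image $[z]$ under $\partial$, hence differ by an integer multiple of $[1_{\mathcal{A}_{\theta_2}}]$, and the already known $\theta=0$ Exel trace formula (Theorem 3.7 of \cite{Hua-Lin}) computes the trace of $b^{0}_{u_{\theta_2},v_{\theta_2}}$ to be $\theta_2$, forcing that multiple to vanish; alternatively one can quote the identification of the bott class of the canonical generators of $\mathcal{A}_{\theta}$ with the Rieffel class from \cite{Hua-Lin}. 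Without some such argument the case $\theta_1=0$ is not covered by your proposal.
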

\begin{proof}  
Let $\|uv-e^{2\pi i\theta_1}vu\|=\varepsilon_1 < 2$. 
Choose $\delta > 0$ so that 
$|e^{2\pi i\theta_1}-e^{2\pi i(\theta_1 + \delta)}| <\frac{2 - \varepsilon_1}{6}$. 
Let $\theta_2$ be a real number such that $\| \theta_2 - \theta_1 \| < \delta$. Then 
$$\|uv-e^{2\pi i\theta_2}vu\|\leq\|uv-e^{2\pi i\theta_1}vu\|+|e^{2\pi i\theta_1}-e^{2\pi i\theta_2}|\leq\varepsilon_1 +\frac{2 - \varepsilon_1}{6}< 2.$$
Let $\varepsilon_2=\varepsilon_1 +\frac{2 - \varepsilon_1}{6}.$
Let $\theta = \frac{\theta_1 + \theta_2}{2}$.
We can compute that
\begin{align*}
\|\mathfrak{u}_{\varepsilon_1,\theta_1}\mathfrak{v}_{\varepsilon_1,\theta_1}-e^{2\pi i\theta}\mathfrak{v}_{\varepsilon_1,\theta_1}\mathfrak{u}_{\varepsilon_1,\theta_1}\|
& \leq \|\mathfrak{u}_{\varepsilon_1,\theta_1}\mathfrak{v}_{\varepsilon_1,\theta_1}-e^{2\pi i\theta_1}\mathfrak{v}_{\varepsilon_1,\theta_1}\mathfrak{u}_{\varepsilon_1,\theta_1}\|+|e^{2\pi i\theta_1}-e^{2\pi i\theta}|\\
&\leq \varepsilon_1 + \frac{2 - \varepsilon_1}{6} = \frac{2 + 2\varepsilon_1}{6}.
\end{align*}
and 
\begin{align*}
\|\mathfrak{u}_{\varepsilon_2,\theta_2}\mathfrak{v}_{\varepsilon_2,\theta_2}-e^{2\pi i\theta}\mathfrak{v}_{\varepsilon_2,\theta_2}\mathfrak{u}_{\varepsilon_2,\theta_2}\|
& \leq \|\mathfrak{u}_{\varepsilon_2,\theta_2}\mathfrak{v}_{\varepsilon_2,\theta_2}-e^{2\pi i\theta_2}\mathfrak{v}_{\varepsilon_2,\theta_2}\mathfrak{u}_{\varepsilon_2,\theta_2}\|+|e^{2\pi i\theta_2}-e^{2\pi i\theta}|\\
&\leq \varepsilon_2 + \frac{2 - \varepsilon_1}{6} = \frac{2 + 2\varepsilon_1}{3}.
\end{align*}


 Let $\ep = \frac{2 + 2\varepsilon_1}{3} < 2$. We have two surjective homomorphisms 
 \[
 \phi_{1} \colon \mathcal{S}_{\ep,\theta} \rightarrow \mathcal{S}_{\varepsilon_1,\theta_1}, \quad
 \phi_{1}(\mathfrak{u}_{\ep,\theta})=\mathfrak{u}_{\varepsilon_1,\theta_1},\,\,
 \phi_{1}(\mathfrak{v}_{\ep,\theta})=\mathfrak{v}_{\varepsilon_1,\theta_1}
 \]
 and
 \[
 \phi_{2} \colon \mathcal{S}_{\ep,\theta} \rightarrow \mathcal{S}_{\varepsilon_2,\theta_2}, \quad
 \phi_{2}(\mathfrak{u}_{\ep,\theta})=\mathfrak{u}_{\varepsilon_2,\theta_2},\,\,
 \phi_{2}(\mathfrak{v}_{\ep,\theta})=\mathfrak{v}_{\varepsilon_2,\theta_2}.
 \]

We use the same notation as in Definition \ref{dbotA}. Then
\beq
b^{\theta_1}_{u,v}&=&(\phi_{u,v}^{\theta_1})_*(b_{\varepsilon_1}^{\theta_1})=(\phi_{u,v}^{\theta_1})_*\circ \phi_{1 *}(b_{\ep}^{\theta})\nonumber\\
&=&(\phi_{u,v}^{\theta})_*(b_{\ep}^{\theta}) =(\phi_{u,v}^{\theta_2})_*\circ \phi_{2 *}(b_{\ep}^{\theta})\nonumber\\
&=&(\phi_{u,v}^{\theta_2})_*(b_{\varepsilon_2}^{\theta_2}) = b^{\theta_2}_{u,v}.\nonumber
\eneq
\end{proof}

\begin{definition}\label{logarithm}
Let $\theta \in [0, 1)$. Denote by $\log_{\theta}$ the continuous branch of logarithm defined on $F_{\theta}=\{e^{it}:t\in (2\pi\theta-\pi,2\pi\theta+\pi)\}$ with values in $\{ri:r\in (2\pi\theta-\pi,2\pi\theta+\pi)\}$ such that $\log_{\theta}(e^{2\pi i \, \theta}) = 2\pi i \, \theta$.
  Note that if $u$ is any unitary in some $C^*$-algebra $A$ such that 
  $\|u-e^{2\pi i \theta}\|<2$, then ${\rm sp}(u)$ has a gap at $e^{2\pi i \theta + \pi i}$, thus $\log_{\theta}(u)$
  is well-defined. In particular, if $\theta=0,$ we simply write $\log(u)$ for $\log_{0}(u).$
\end{definition}

Now we are ready to state our generalized version of the Exel trace formula. The Exel trace formula is a very convenient tool. In
fact, the recent development in the connection to the Elliott program of classification
of amenable $C^*$-algebras shows that the Exel trace formula has many further
applications. The Exel trace formula brought together the bott element, a topological
obstruction, and rotation number, a dynamical description.
The case where $A$ is the matrix algebra and $ \theta = 0$ is proved in \cite{Exel} (Lemma 3.1 and Theorem 3.2 of \cite{Exel}). The case where $A$ is an arbitrary unital $C^*$-algebra and $\theta = 0$ is proved in Theorem 3.7  of \cite{Hua-Lin}.
\begin{theorem}[\textbf{Generalized Exel trace formula}]\label{trace formula} Let $A$ be a unital $C^*$-algebra with $T(A)\neq \emptyset$, and $\theta \in [0, 1)$.
Then for any $u,v\in U(A)$ and $\|uv-e^{2\pi i \theta}vu\|< 2$, let $b_{u,v}^{\theta}$ be defined as in Definition \ref{dbotA}, we have
\[
\rho_A( b_{u,v}^{\theta})(\tau)=\frac{1}{2\pi i}\tau(\log_{\theta}(uvu^*v^*))\quad \mbox{for all}\quad \tau\in T(A).
\]
\end{theorem}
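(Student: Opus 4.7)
The plan is to reduce the general case to the $\theta = 0$ version already established in Theorem~3.7 of \cite{Hua-Lin}, via a tensor product trick. The case $\theta = 0$ is immediate, so I assume $\theta \in (0, 1)$. Let $\mathcal{B} = \mathcal{A}_{1-\theta}$ with canonical generators $u', v'$ satisfying $u'v' = e^{-2\pi i\theta}v'u'$ and canonical tracial state $\tau_{1-\theta}$. Set $U = u \otimes u'$, $V = v \otimes v'$ in $A \otimes \mathcal{B}$. A direct computation gives $\|UV - VU\| = \|uv - e^{2\pi i\theta}vu\| < 2$ and
\[
UVU^*V^* = (uvu^*v^*) \otimes (u'v'u'^*v'^*) = (e^{-2\pi i\theta}uvu^*v^*) \otimes 1,
\]
which lies within distance $2$ of $1 \otimes 1$. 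Applying Theorem~3.7 of \cite{Hua-Lin} to $(U, V)$ with the trace $\tau \otimes \tau_{1-\theta}$, and using the functional-calculus identity $\log_0(e^{-2\pi i\theta}w) = \log_\theta(w) - 2\pi i\theta$ valid for $w$ in the domain of $\log_\theta$, the right-hand side of the $\theta = 0$ formula simplifies to $\frac{1}{2\pi i}\tau(\log_\theta(uvu^*v^*)) - \theta$.

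To finish the proof I must then show
\[
\rho_{A \otimes \mathcal{B}}(b_{U, V}^0)(\tau \otimes \tau_{1-\theta}) = \rho_A(b_{u, v}^\theta)(\tau) - \theta,
\]
which I plan to derive from the $K_0$-identity
\[
b_{U, V}^0 = b_{u, v}^\theta \otimes [1_{\mathcal{B}}] + [1_A] \otimes [p_{1-\theta}] - [1_{A \otimes \mathcal{B}}] \quad \text{in } K_0(A \otimes \mathcal{B}),
\]
where $p_{1-\theta}$ is the Rieffel projection in $\mathcal{B}$ (of canonical trace $1 - \theta$); pairing with $\tau \otimes \tau_{1-\theta}$ then yields the required equality.

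To establish this $K_0$-identity I would proceed by naturality. Using the universal homomorphism $\phi_{u, v}^\theta \otimes \id_\mathcal{B}: \mathcal{S}_{\varepsilon, \theta} \otimes \mathcal{B} \to A \otimes \mathcal{B}$, it suffices to verify the identity in the universal case where $u, v$ are the canonical generators of $\mathcal{S}_{\varepsilon, \theta}$. By Theorem~\ref{soft k-theory}, the map $\varphi_\varepsilon^\theta \otimes \id_\mathcal{B}$ induces an isomorphism on $K_0$ (via K\"unneth), which further reduces the identity to its instance in $\mathcal{A}_\theta \otimes \mathcal{B}$ with $u = u_1$, $v = u_2$ the canonical generators. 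In this exact case $U$ and $V$ commute, so $b_{U, V}^0$ is the honest Bott class of a commuting pair, and I would compute it by expanding $[U] = [u] \otimes [1] + [1] \otimes [u']$ and $[V] = [v] \otimes [1] + [1] \otimes [v']$ in $K_1(\mathcal{A}_\theta \otimes \mathcal{B})$, then tracking the contributions through the K\"unneth decomposition $K_0(\mathcal{A}_\theta \otimes \mathcal{B}) \cong (K_0 \otimes K_0) \oplus (K_1 \otimes K_1)$.

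The main obstacle will be this final K\"unneth computation. Because the pair $(U, V)$ has a diagonal rather than a pure tensor-product structure, the bilinearity of the external Bott product does not apply directly; instead one must match the Rieffel contributions of $(u, v)$ and $(u', v')$ against the cross terms arising from the noncommutativity. The appearance of the correction terms $[1_A] \otimes [p_{1-\theta}] - [1_{A \otimes \mathcal{B}}]$ in the proposed identity reflects precisely these cross contributions, and the delicate point is to show that they add up to exactly the right value so that the trace pairing recovers the $-\theta$ shift matched by the RHS computation above.
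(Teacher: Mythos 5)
Your reduction via $U = u \otimes u'$, $V = v \otimes v'$ in $A \otimes \mathcal{A}_{1-\theta}$ is a genuinely different route from the paper's argument, and the front end of it is correct: the norm identity $\|UV - VU\| = \|uv - e^{2\pi i\theta}vu\|$, the commutator formula $UVU^*V^* = (e^{-2\pi i\theta}uvu^*v^*)\otimes 1$, the logarithm shift $\log_0(e^{-2\pi i\theta}w) = \log_\theta(w) - 2\pi i\theta$, and the trace arithmetic all check out. The naturality reduction to the universal case and then to the exact case $\mathcal{A}_\theta \otimes \mathcal{A}_{1-\theta}$ is also sound in principle, provided one is a little careful to upgrade the $K$-isomorphism $\varphi_\ep^\theta$ to a $KK$-equivalence (both sides lie in the bootstrap class, so the UCT delivers this) before tensoring with $\mathrm{id}_{\mathcal{B}}$.

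The genuine gap is exactly where you flag it: the proposed $K_0$-identity
\[
b_{U, V}^0 = b_{u, v}^\theta \otimes [1_{\mathcal{B}}] + [1_A] \otimes [p_{1-\theta}] - [1_{A \otimes \mathcal{B}}]
\]
is asserted but not proved, and the K\"unneth computation that would establish it is not a routine expansion. The Bott class of a commuting pair is bilinear only with respect to decompositions $U = U_1 U_2$, $V = V_1 V_2$ in which \emph{all} the factors mutually commute; but here the natural factorizations $U = (u_1 \otimes 1)(1 \otimes u')$ and $V = (u_2 \otimes 1)(1 \otimes v')$ fail this requirement, since $u_1 \otimes 1$ and $u_2 \otimes 1$ do not commute. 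So the cross terms you would want to call $b^0_{u_1 \otimes 1,\, u_2 \otimes 1}$ and $b^0_{1 \otimes u',\, 1 \otimes v'}$ are not Bott classes of commuting pairs at all, and the term you actually need—$b_{u,v}^\theta \otimes [1]$—lives in $K_0 \otimes K_0$, not in the $K_1 \otimes K_1$ piece where an external Bott product would land. Nailing this down would require a separate argument that is at least as substantial as the one you are trying to replace. The paper avoids this entirely: it perturbs $\theta$ to a nearby rational (Lemma~\ref{P_RieffelProjsame} keeps both sides of the trace formula unchanged), at which point $\mathcal{A}_\theta$ has a finite-dimensional irreducible representation $\pi_0$, and then it averages traces against block-diagonal copies of $\pi_0$ to force the a priori integer discrepancy $k_\tau$ to vanish. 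That argument is purely functional-calculus and representation-theoretic and never touches the K\"unneth theorem, which is why it closes. To make your approach complete you would need to supply an independent proof of the $K_0$-identity (or at least of the trace pairing it implies); as written, that is the load-bearing step and it is missing.
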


\begin{proof}
It is clear that we need only to prove this theorem for $A$
being the soft rotation algebra $\mathcal{S}_{\varepsilon,\theta}$.
The general case is proved by considering
the canonical homomorphism from $\mathcal{S}_{\varepsilon,\theta}$ to $A$
and use functional calculus.
The proof follows by essentially the same argument as
in Theorem 3.5 of \cite{Hua-Lin} with a small modification.
For the reader's convenience, we sketch a proof here.

First of all, by Proposition \ref{P_RieffelProjsame},
the quantity $\rho_{A}( b_{u,v}^{\theta})(\tau)=\tau(b_{u,v}^{\theta})$ on the left-hand side  remains unchanged for small perturbations of $\theta$. The same is true for  quantity on the right-hand side.
So without loss of generality, we can assume $\theta$ is rational
and not equal to $0$.

By exactly the same argument as in the proof of Theorem 3.5 of \cite{Hua-Lin},
for each tracial state $\tau$ of $A$,
there is an integer $k_\tau$ such that
\begin{equation}\label{E_TraceF1}
\tau( b_{\varepsilon}^{\theta})=\frac{1}{2\pi i}\tau(\log_{\theta}(\mathfrak{u}_{\varepsilon,\theta}\mathfrak{v}_{\varepsilon,\theta}\mathfrak{u}^*_{\varepsilon,\theta}\mathfrak{v}^*_{\varepsilon,\theta}))+k_\tau.
\end{equation}
Moreover, there is an integer $K$ such that $|k_\tau| \leq K$, for all tracial state $\tau$ on $A$.
The rest of the effort is then devoted to prove that they are actually $0$.
If $\theta$ is rational, then $\mathcal{A}_\theta$ is
strong Morita equivalent to $C(\T^2)$, by Corollary 4.2 of \cite{Elliott-Lihanfeng}.
In particular, there is an irreducible finite-dimensional representation
$\pi_0 \colon \mathcal{A}_{\theta} \rightarrow M_n$.
Let $\psi \colon \mathcal{S}_{\varepsilon,\theta} \rightarrow \mathcal{A}_\theta$
be the canonical surjective homomorphism. Let
\[
\iota \colon M_n \rightarrow M_n(\mathcal{S}_{\varepsilon,\theta})
\cong M_n \otimes \mathcal{S}_{\varepsilon,\theta}, \quad \iota(a) = a \otimes 1_{\mathcal{S}_{\varepsilon,\theta}}, \,\,
\forall a \in M_n.
\]

Define $\pi = \iota \circ \pi_0 \circ \psi$.
For any positive integer $s$, let
$\Phi \colon \mathcal{S}_{\varepsilon,\theta} \rightarrow M_{ns+1}(\mathcal{S}_{\varepsilon,\theta})$
be the unital homomorphism defined by
\[
\Phi(a) = \mathrm{diag}\{a, \pi(a), \pi(a), \dots, \pi(a)\}, \text{\,\,for\,\,all\,\,} a \in \mathcal{S}_{\varepsilon,\theta}.
\]

For any positive integer $k$, let $\mathrm{Tr}$ be the unnormalized trace on $M_k$.
Let $\tau$ be an arbitrary tracial state on $\mathcal{S}_{\varepsilon,\theta}$,
we use $\tau^{\oplus k}$ to denote the trace on $M_k \otimes \mathcal{S}_{\varepsilon,\theta}$
induced by $\tau^{\oplus k}(a \otimes b) = \mathrm{Tr}(a)\tau(b)$.
Let $\tau_0 = \frac{1}{ns+1}\tau^{\oplus ns+1}$,
which is a tracial state on $M_{ns+1}(\mathcal{S}_{\varepsilon,\theta})$.
Then $\tau_0 \circ \Phi$ is a tracial state on $\mathcal{S}_{\varepsilon,\theta}$.
Therefore equation (\ref{E_TraceF1}) gives
\[
\tau_0 \circ \Phi( b_{\varepsilon}^{\theta})=
\frac{1}{2\pi i}\tau_0\circ \Phi(\log_{\theta}(\mathfrak{u}_{\varepsilon,\theta}\mathfrak{v}_{\varepsilon,\theta}\mathfrak{u}^*_{\varepsilon,\theta}\mathfrak{v}^*_{\varepsilon,\theta}))+k_{\tau_0 \circ \Phi}.
\]

Let $b_{u, v}^{\theta}$ be the Rieffel projection in $\mathcal{A}_\theta$. From the definition of $\Phi$, we can compute that
\begin{align*}
\tau_0 \circ \Phi( b_{\varepsilon}^{\theta}) 
&= \frac{1}{ns+1}\left(\tau(b_{\varepsilon}^{\theta}) + s\tau^{\oplus n}(\pi(b_{\varepsilon}^{\theta})) \right) \\
&= \frac{1}{ns+1}\left(\tau(b_{\varepsilon}^{\theta}) + 
s\mathrm{Tr}(\pi_0\circ \psi(b_{\varepsilon}^{\theta}))\right)\\
&= \frac{1}{ns+1}\left(\tau(b_{\varepsilon}^{\theta}) + 
s\mathrm{Tr}(\pi_0(b_{u, v}^{\theta}))\right)\\
&=\frac{1}{ns+1}\left(\tau(b_{\varepsilon}^{\theta}) + 
ns\theta\right).
\end{align*}

A similar calculation shows that
\[
\tau_0\circ \Phi(\log_{\theta}(\mathfrak{u}_{\varepsilon,\theta}\mathfrak{v}_{\varepsilon,\theta}\mathfrak{u}^*_{\varepsilon,\theta}\mathfrak{v}^*_{\varepsilon,\theta}))
=\frac{1}{ns+1}\left(
\tau(\log_{\theta}(\mathfrak{u}_{\varepsilon,\theta}\mathfrak{v}_{\varepsilon,\theta}\mathfrak{u}^*_{\varepsilon,\theta}\mathfrak{v}^*_{\varepsilon,\theta})) +2\pi i ns\theta
\right).
\]

This implies that
\[
|k_{\tau_0 \circ \Phi}| = |\frac{k_{\tau}}{ns+1}| \leq \frac{K}{ns+1}.
\]
Since $k_{\tau_0 \circ \Phi}$ is always an integer, it must be $0$ if $s$ is large enough. This implies
that $k_\tau$ is $0$.
\end{proof}

As an immediate application of the Exel trace formula, we show that certain trace condition is necessary
to prove stability of rotation relations.
\begin{cor}\label{C_ETF}
Let $\theta \in [0, 1)$. There exists a $\delta > 0$ such that, for any unital $C^*$-algebra $A$ with $T(A)\neq \emptyset$,
any unitaries $u, v, \tilde{u}, \tilde{v}$ in $A$ satisfying the following:
\begin{enumerate}
    \item $\|uv - e^{2\pi i \theta}vu\| < 2,$
    \item $\tilde{u}\tilde{v} = e^{2\pi i \theta}\tilde{v}\tilde{u},$ and
    \item $\|\tilde{u} - u\| < \delta, \, \|\tilde{v} - v\| < \delta$,
\end{enumerate}
we have $\frac{1}{2\pi i} \tau(\log_\theta(uvu^*v^*)) = \theta$ for any tracial state $\tau$ on $A$.
\end{cor}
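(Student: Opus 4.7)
The plan is to apply the Generalized Exel trace formula (Theorem \ref{trace formula}) twice: once to the original pair $(u,v)$ to convert the log--expression into a $K_0$--trace pairing, and once to the exactly rotating pair $(\tilde u,\tilde v)$, whose commutator is the scalar $e^{2\pi i\theta}\cdot 1_A$, so that $\log_\theta$ evaluates on it explicitly. The bridge between the two applications is that the class $b^\theta_{u,v}\in K_0(A)$ is stable under small perturbations of the generating pair, so for $\delta$ small we have $b^\theta_{u,v}=b^\theta_{\tilde u,\tilde v}$.

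Concretely, I would first fix $\delta$ small enough that three things hold: (a) $\|uv-e^{2\pi i\theta}vu\|$ is smaller than the constant of Proposition \ref{P_RieffelEgap} (so that the Rieffel element $R^\theta(u,v)$ of Definition \ref{D_RieffelE} is a genuine projection representing $b^\theta_{u,v}$ by Proposition \ref{P_RieffelEConst}); (b) $\|uvu^*v^*-e^{2\pi i\theta}\|<2$, so $\log_\theta(uvu^*v^*)$ is well defined; and (c) $\|e^\theta(u,v)-e^\theta(\tilde u,\tilde v)\|<\tfrac14$, so that the corresponding spectral projections $R^\theta(u,v)$ and $R^\theta(\tilde u,\tilde v)$ are norm close and hence Murray--von Neumann equivalent in $A$. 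Granted (c), the Exel trace formula applied to $(u,v)$ yields
\[
\tfrac{1}{2\pi i}\tau(\log_\theta(uvu^*v^*))=\tau(b^\theta_{u,v})=\tau(b^\theta_{\tilde u,\tilde v}),
\]
while applied to $(\tilde u,\tilde v)$ it gives
\[
\tau(b^\theta_{\tilde u,\tilde v})=\tfrac{1}{2\pi i}\tau(\log_\theta(\tilde u\tilde v\tilde u^*\tilde v^*))=\tfrac{1}{2\pi i}\tau(\log_\theta(e^{2\pi i\theta}\cdot 1_A))=\theta,
\]
since by construction $\log_\theta(e^{2\pi i\theta})=2\pi i\theta$. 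Chaining these equalities gives the desired identity for all $\tau\in T(A)$.

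The only step that requires any work is (c), and it is routine functional calculus: the functions $f,g$ defining $e^\theta(\cdot,\cdot)$ are continuous, so $\|e^\theta(u,v)-e^\theta(\tilde u,\tilde v)\|$ can be made arbitrarily small by shrinking $\delta$, and since both self-adjoint elements have a uniform spectral gap at $\tfrac12$ by Proposition \ref{P_RieffelEgap}, a continuous cut-off function equal to $\chi_{(1/2,\infty)}$ on both spectra transports norm closeness to the spectral projections. For the edge case $\theta=0$, one replaces the Rieffel element by the Bott element throughout; then $\log_0(1)=0$ forces $\tau(b^0_{\tilde u,\tilde v})=0$, in agreement with the formula.
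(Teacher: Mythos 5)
Your proposal is correct and follows essentially the same route as the paper's proof: both apply the generalized Exel trace formula (Theorem \ref{trace formula}) to $(u,v)$ and to $(\tilde u,\tilde v)$, bridging the two via Propositions \ref{P_RieffelEgap} and \ref{P_RieffelEConst}, since norm-closeness of the Rieffel elements forces $b^{\theta}_{u,v}=b^{\theta}_{\tilde u,\tilde v}$ in $K_0(A)$, with the Bott element replacing the Rieffel element when $\theta=0$. No substantive differences.
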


\begin{proof}
We first assume $\theta \neq 0$.
Choose $\delta_0$ according to Proposition \ref{P_RieffelEConst}. 
Since $R^{\theta}(u, v)$ is defined in terms of continuous 
functional calculus of $u, v$, there is a 
$\delta_1$ such that whenever $\|\tilde{u} - u\| < \delta_1$
and $\|\tilde{v} - v\| < \delta_1$, we have 
$\| R^{\theta}(u, v) - R^{\theta}(\tilde{u}, \tilde{v})\| < \frac{1}{2}$. Let $\delta = \min \{\delta_0, \delta_1\}$. 
Then for any $u, v, \tilde{u}, \tilde{v}$ satifies conditions 
(1), (2), (3) in the statement of this corollary, we have
\[
b_{u,v}^{\theta} = [R^{\theta}(u, v)] = [R^{\theta}(\tilde{u}, \tilde{v}] = b_{\tilde{u}, \tilde{v}}^{\theta}.
\]
Hence 
\begin{align*}
\frac{1}{2\pi i} \tau(\log_\theta(uvu^*v^*))
& = \rho_A(b_{u,v}^{\theta}) (\tau)
  = \rho_A(b_{\tilde{u}, \tilde{v}}^{\theta})(\tau)\\
& = \frac{1}{2\pi i} \tau(\log_\theta(\tilde{u}\tilde{v}\tilde{u}^*\tilde{v}^*)) = \theta
\end{align*}
for all $\tau\in T(A).$
The proof for the case that $\theta = 0$ is exactly the same, one just
have to use the bott element instead of the Rieffel element.
\end{proof}






\section{Stability of rotation relations in $C^*$-algebras of tracial rank at most one} \label{S_SofTR1}

Let us begin with a brief outline of our strategy of proving stability of rotation relations.
Let $\Theta$ be a $n \times n$ real skew-symmetric matrix. Suppose $ \{ v_i \}_{i = 1, 2, \dots n}$
are $n$ unitaries in a unital simple $C^*$-algebra $A$ with $TR(A) \leq 1$ 
which almost satisfy the rotation relation with respect to $\Theta$.
Then there is an almost homomorphism from $\mathcal{A}_\Theta$ to $A$.
Now stability of the rotation relation is equivalent to that 
this almost homomorphism is close to an actual homomorphism.

The later problem is usually divided into two parts:
the existence part and the uniqueness part.
An almost homomorphism will induce an `almost'  homomorphism 
between the invariants of the two $C^*$-algebras, where the invariant consists of
the K-theories and the trace spaces together with certain comparability maps.
It is relatively  easy to show that an `almost' homomorphism 
of the invariants is close to an actual homomorphism.
The existence part says that a homomorphism at the invariant level
lifts to a homomorphism at the $C^*$-algebra level.
The uniqueness part says that, two almost homomorphisms which induces `almost'
the same maps on the invariants are almost unitarily equivalent. 
Therefore, conjugating suitable unitaries, one shows that an almost homomorphism is close
to an actual homomorphism.

It turns out that, to implement the above strategy, if $n \geq 3$, the current available tools
require an additional injectivity condition. We give the following counterexample 
to the stability problem which shows that, without an injectivity condition, 
the stability problem can be very complicated.

\begin{example}
There is a sequence of triples  $( U_{n, 1}, U_{n, 2}, U_{n,3})_{n \in \N}$ of unitary matrices
such that
\begin{align*}
& \lim_{ n \rightarrow \infty} \| U_{n, j}U_{n,k} - U_{n, k}U_{n,j} \| = 0, \quad 1 \leq j, k \leq 3\,\,\,{\rm and}\\
& \frac{1}{2\pi i} \tau (\log(U_{n, j}U_{n,k}U_{n, j}^*U_{n,k}^*)) = 0, \quad 1 \leq j, k \leq 3,\, n\in \N,
\end{align*}
where $\log = \log_0,$
but there is no sequence of triples $(U'_{n,1}, U'_{n,2}, U'_{n,3})_{n\in \N}$ of exactly commuting
unitary matrices  
$$U_{n,j}'U_{n,k}'=U_{n,k}'U_{n,j}',\quad 1 \leq j, k \leq 3$$
such that
\[
\lim_{n \rightarrow \infty} \|U_{n, j} - U'_{n,j}\| = 0,\,\, j = 1, 2, 3.
\]
\end{example}

\begin{proof}
By Voiculescu's result \cite{Voiculescu1},
there  exists a sequence of triples $( H_{n, 1}, H_{n, 2}, H_{n,3})_{n \in \N}$ of self-adjoint matrices with norm one such that
\[
\lim_{ n \rightarrow \infty} \| H_{n, j}H_{n,k} - H_{n, k}H_{n,j} \| = 0, \quad 1 \leq j, k \leq 3
\]
for which does not exist a sequence of triples $(H'_{n,1}, H'_{n,2}, H'_{n,3})_{n\in \N}$ of exactly commuting self-adjoint
matrices such that
\[
\lim_{n \rightarrow \infty} \|H_{n, j} - H'_{n,j}\| = 0,\,\, j = 1, 2, 3.
\]
Let $U_{n, j} = e^{\pi i\, H_{n, j}\slash 2}$. Then 
\[
\lim_{ n \rightarrow \infty} \| U_{n, j}U_{n,k} - U_{n, k}U_{n,j} \| = 0, \quad 1 \leq j, k \leq 3.
\]
Now fix a pair $j, k$. By Lin's result \cite{Lin2}, passing to subsequences if necessary,
there is a sequence
of pairs $(H''_{n,j}, H''_{n,k})_{n\in \N}$ of commuting self-adjoint matrices 
 such that
\[
\lim_{n \rightarrow \infty} \|H_{n, j} - H''_{n,j}\| = 0\,\,\,{\rm and}\,\,\, \lim_{n \rightarrow \infty} \|H_{n, k} - H''_{n,k}\| = 0.
\]
Therefore
\[
\lim_{n \rightarrow \infty} \|U_{n, j} - e^{\pi i \, H''_{n,j}\slash 2}\| = 0\,\,\,{\rm and}\,\,\, \lim_{n \rightarrow \infty} \|U_{n, k} - e^{\pi i \, H''_{n,k}\slash 2}\| = 0.
\]
 By Corollary \ref{C_ETF}, if $n$ is sufficiently large, then
\[
\frac{1}{2\pi i} \tau (\log (U_{n, j}U_{n,k}U_{n, j}^*U_{n,k}^*)) = 0.
\]
We claim that there is no sequence of triples $(U'_{n,1}, U'_{n,2}, U'_{n,3})_{n\in \N}$ of commuting unitaries such that
\[
\lim_{n \rightarrow \infty} \|U_{n, j} - U'_{n,j}\| = 0,\,\, j = 1, 2, 3.
\]
Otherwise, since the spectra  of $U_{n,1},U_{n,2},U_{n,3}$ are all in $\{e^{\pi i\theta}:\theta\in [-\frac{1}{2},\frac{1}{2}]\}$  for all $n\in \N$, and
\begin{eqnarray}\label{limit}
\lim_{n \rightarrow \infty} \|U_{n, j} - U'_{n,j}\| = 0,\,\, j = 1, 2, 3,\end{eqnarray} we can find a sufficiently large integer $N$ such that for any $n\geq  N$,  $-1$ is not in the spectra of $U_{n,1}',U_{n,2}',U_{n,3}'$. Without loss of generality, we can assume that $-1$ is not in the spectra of $U_{n,1}',U_{n,2}',U_{n,3}'$ for $n\in \N.$ Let $H'_{n,j} =\frac{2}{\pi i} \log (U'_{n,j})$ for $j=1, 2, 3$.
Then $(H'_{n,1}, H'_{n,2}, H'_{n,3})_{n\in \N}$ is a sequence
of commuting triples of self-adjoint matrices. Note that $H_{n,j} =\frac{2}{\pi i} \log (U_{n,j})$ for $j=1, 2, 3$. By
continuous functional calculus and (\ref{limit}), we get
\[
\lim_{n \rightarrow \infty} \|H_{n, j} - H'_{n,j}\| = 0,
\]
which is a contradiction.
\end{proof}

The existence theorem and the uniqueness theorem are taken from \cite{Lin17}.
Before we can state them, let us introduce some notation.
Let $C = PM_n(C(X))P$ be a homogenous algebra, where $X$ is a compact metric space
and $P$ is a projection in $M_n(C(X))$. Then for any tracial state $\tau$ on $C$,
there is a probability measure $\mu$ on $X$ such that 
\[
\tau(f) = \int_{x \in X} \mathrm{tr}_x(f(x))\,d\mu,
\]
where $\mathrm{tr}_x$ is the normalized trace on the fiber corresponds to $x$ (which is isomorphic to a matrix algebra).
We shall use $\mu_\tau$ to denote the measure associated to $\tau$.

Let $C$ be a unital $C^*$-algebra with $T(C) \neq \emptyset$. 
By Theorem 3.2 of \cite{Thomsen}, the de la Harpe-Scandalis determinant provides a continuous homomorphsim
\begin{equation}\label{E_HSdet}
\bar{\Delta} \colon U_0(M_{\infty}(C))/CU(M_{\infty}(C))
\rightarrow \mathrm{Aff}(T(C))/\overline{\rho_C(K_0(C))}.
\end{equation}
By Corollary 3.3 of \cite{Thomsen}, there is an induced split exact sequence

\begin{equation} \label{E_Thomsen_es}
0 \rightarrow \mathrm{Aff}(T(C)) / \overline{\rho_C(K_0(C))}
\rightarrow U(M_{\infty}(C))/CU(M_\infty(C)) 
\xrightarrow[]{\pi_C} K_1(C) \rightarrow 0. 
\end{equation}
The reader is referred to \cite{Thomsen} for more details 
of the homomorphism (\ref{E_HSdet}) and the exact sequence
(\ref{E_Thomsen_es}). 

Since the exact sequence (\ref{E_Thomsen_es}) is split,
there is a homomorphism 
\[
J_C \colon K_1(C) \rightarrow U(M_{\infty}(C))/CU(M_\infty(C))
\]
such that $\pi_C \circ J_C = \mathrm{Id}_{K_1(C)}$. 
We shall use $U_c(K_1(C))$ to denote a set of representatives 
of $J_C(K_1(C))$ in $U(M_{\infty}(C))$. 
For each $C$, we shall fix a splitting map $J_C$ and a set of representatives $U_c(K_1(C))$
if not mentioned explicitly.

For any two unitaries $u, v \in U(M_{\infty}(C))$ such that 
$uv^* \in U_0(M_{\infty}(C))$, define
\[
\mathrm{dist}(\bar{u}, \bar{v}) = \| \bar{\Delta}(uv^*)\|,
\]
where the norm is the quotient norm on 
$\mathrm{Aff}(T(C))/\rho_C(K_0(C))$.

The following lemma allows us to estimate the norm we just defined:
\begin{lemma} \label{L_Dist}
For any $\ep > 0$, there is a $\delta > 0$ such that, whenever $A$ is a unital
$C^*$-algebra, if $u, v$ are unitaries in $U_{\infty}(A)$ such that $\|wuv^* - 1\| < \delta$
for some $w \in CU_{\infty}(A)$, then
\[
\mathrm{dist} (\bar{u}, \bar{v}) < \ep.
\]
\end{lemma}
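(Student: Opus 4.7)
The plan is to exploit the de la Harpe--Scandalis determinant $\bar\Delta$ together with the observation, read off from the split exact sequence (\ref{E_Thomsen_es}), that $\bar\Delta$ is trivial on $CU_\infty(A)$. In the degenerate case $T(A)=\emptyset$ the target $\mathrm{Aff}(T(A))/\overline{\rho_A(K_0(A))}$ is the zero group, so the estimate is automatic; in what follows I therefore assume $T(A)\neq\emptyset$.

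Given $\ep>0$, I would first choose $\delta\in(0,2)$ small enough that, for any unitary $z$ in any unital $C^*$-algebra $B$ with $\|z-1\|<\delta$, the element $h:=-i\log_0(z)$ is a self-adjoint element of $B$ satisfying $z=e^{ih}$ and $\|h\|<2\pi\ep$. Such a $\delta$ is obtained purely from the continuous functional calculus on the spectrum near $1\in\mathbb{T}$ (explicitly, one may take $\delta=2\sin(\pi\ep)$ for $\ep<1/2$), so $\delta$ depends only on $\ep$ and not on $A$ or on the particular unitaries involved.

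Now suppose $u,v\in U_\infty(A)$ and $w\in CU_\infty(A)$ satisfy $\|wuv^*-1\|<\delta$. Put $z:=wuv^*=e^{ih}$ with $\|h\|<2\pi\ep$. Note that $z\in U_{\infty,0}(A)$ and $w\in U_{\infty,0}(A)$ (since $CU_\infty(A)\subseteq U_{\infty,0}(A)$), so $uv^*=w^*z$ also lies in $U_{\infty,0}(A)$ and $\bar\Delta(uv^*)$ is defined. Since $\bar\Delta$ is a continuous group homomorphism vanishing on commutators, it vanishes on the (norm) closed subgroup $CU_\infty(A)$; in particular $\bar\Delta(w^*)=0$, and hence
\[
\bar\Delta(uv^*)=\bar\Delta(w^*)+\bar\Delta(z)=\bar\Delta(e^{ih}).
\]

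Finally, the standard integral formula for the de la Harpe--Scandalis determinant along the smooth path $t\mapsto e^{ith}$, $t\in[0,1]$ (see \cite{Thomsen}) yields
\[
\bar\Delta(e^{ih})(\tau)=\frac{1}{2\pi}\tau(h)\pmod{\overline{\rho_A(K_0(A))}}
\]
for every $\tau\in T(A)$. Since $|\tau(h)/(2\pi)|\leq\|h\|/(2\pi)<\ep$ uniformly in $\tau$, the representing affine function has sup norm less than $\ep$, so the quotient norm satisfies the same bound, and $\mathrm{dist}(\bar u,\bar v)=\|\bar\Delta(uv^*)\|<\ep$. There is no genuine obstacle beyond invoking the triviality of $\bar\Delta$ on $CU_\infty(A)$ and the path formula; the only point worth checking, namely that $\delta$ is independent of $A$, is immediate because both the logarithm bound and the path formula are uniform in the ambient algebra.
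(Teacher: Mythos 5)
Your proof is correct and follows essentially the same route as the paper: reduce modulo $CU_\infty(A)$ by the triviality of the de la Harpe--Scandalis determinant there, take a branch of logarithm to write $uv^*$ as $e^{ih}$ with $\|h\|$ small, and apply the path-integral formula for $\bar\Delta$ to get $\bar\Delta(uv^*)(\tau) = \tau(h)/2\pi$ modulo $\overline{\rho_A(K_0(A))}$ and hence a uniform bound on the quotient norm. Your explicit choice $\delta = 2\sin(\pi\ep)$ is in fact numerically the same as the paper's $\delta = |e^{2\pi i\ep}-1|$; the only cosmetic additions are the dismissal of the degenerate case $T(A)=\emptyset$ and the remark that $CU_\infty(A)\subset U_{\infty,0}(A)$, both of which the paper leaves implicit.
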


\begin{proof}
Since $\bar{\Delta}(w) \in \rho_A(K_0(A))$ for $w \in CU_{\infty}(A)$, we can ignore $w$ 
when calculate the distance. So without loss of generality we assume $w = 1$ and $\ep< 1$. 
Let $\delta = |e^{2 \pi i \ep} - 1|$. Suppose $\|uv^* -1\| < \delta$. 
Then the spectrum of $uv^*$ is contained in $\{e^{2\pi i \theta} \,\vert\, \theta \in (-\ep, \ep)\}$.
Therefore a continuous (normalized) logarithm $\log$ can be defined:
\[
\log \colon \mathrm{sp}(uv^*) \rightarrow (-\ep2\pi i, \ep2\pi i), \quad \log(e^{2\pi i \theta}) =2\pi i \theta.
\]
Let $h = \frac{1}{2\pi i}\log (uv^*)$. Then $\|h\| \leq \ep$ and $uv^* = e^{2\pi i h}$.
Let $\eta \colon [0, 1] \rightarrow U_{\infty, 0}(A)$ be defined by $\eta(t) = e^{2 \pi i th}$.
Then for any tracial state $\tau$ on $A$, we have
\[
|\bar{\Delta} ( \eta )(\tau)| =| \frac{1}{2 \pi i} \int_0^1 \tau (\eta'(t)\eta(t)^*)\,{\rm d}t|
= |\frac{1}{2\pi i} \int_0^1 \tau (2 \pi i h)\,{\rm d}t |\leq \| h \| < \ep.
\]
Therefore $\mathrm{dist} (\bar{u}, \bar{v}) = \| \bar{\Delta} (u v^*) \| < \ep$.
\end{proof}

\begin{definition} Let $X$ be a compact metric space, let $x\in X$ and let $r > 0$. Denote by $O_r(x)$
the open ball with center at $x$ and radius $r$. If $x$ is not specified, $O_r$ is an open ball of radius $r$.
\end{definition}

Now we are ready to  state the uniqueness theorem and existence theorem.
\begin{theorem}\label{T_unique}
(Theorem 5.3 and Corollary 5.5 of \cite{Lin17})
Let $C = PM_n(C(X))P$, where $X$ is a compact metric space and $P$ is a projection in $M_n(C(X))$. 
Let $\Delta \colon (0, 1) \rightarrow (0, 1)$
be a non-decreasing function such that $\lim_{r \rightarrow 0} \Delta(r) = 0$.
Let $\ep > 0$ and
let $\mathcal{F} \subset C$ be a finite subset. 
Then there exists $\eta> 0$, $\delta > 0$, 
a finite subset $\mathcal{G} \subset C$,
a finite subset $\mathcal{P} \subset \underline{K}(C)$, 
a finite subset $\mathcal{H} \subset C_{s.a.}$ and 
a finite subset $\mathcal{U} \in U_c(K_1(C))$ satisfying the following:

Suppose that $A$ is a unital separable simple $C^*$-algebra with $TR(A) \leq 1$.
Suppose $L_1, L_2 \colon C \rightarrow A$ are two unital $\mathcal{G}$-$\delta$-multiplicative completely positive contractive
(c.p.c.) maps such that
\begin{align}
[L_1]\vert_\mathcal{P} & = [L_2]\vert_{\mathcal{P}},\phantom{aaaaaaaaaaaaaaaaaaaaa}\label{unique1}\\
|\tau \circ L_1(g) - \tau \circ L_2(g)| & < \delta,
\quad \text{for all\,\,} g \in \mathcal{H} \\
\mathrm{dist}(\overline{ \langle L_1(u) \rangle }, \overline{\langle L_2(u) \rangle}) & < \delta,\quad \text{for all\,\,} u \in \mathcal{U}\quad \text{and}\\
\mu_{\tau \circ L_i}(O_r) & > \Delta(r), \quad i = 1, 2,
\end{align}
for all $\tau \in T(A)$ and  all open balls $O_r$ in $X$ with radius $r\geq  \eta$. Then there exists a unitary $W \in A$ such that
\[
\| \mathrm{Ad} W \circ L_1 (f) - L_2(f)\| < \ep 
\text{\,\,for all\,\,} f \in \mathcal{F}.
\]
\end{theorem}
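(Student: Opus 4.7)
Since this is stated as Theorem 5.3 and Corollary 5.5 of \cite{Lin17}, the most honest approach is to cite Lin's paper directly and only sketch the structure of the proof. Below I outline how I would reproduce the argument if I had to.

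The plan is to use the tracial rank at most one hypothesis on $A$ to split the problem into two pieces and handle each by a different tool. First, by standard perturbation arguments (using that $PM_n(C(X))P$ is finitely presented in an approximate sense), I would reduce, at the cost of slightly enlarging $\mathcal{G}$ and shrinking $\delta$, to the case where $L_1$ and $L_2$ are actual unital homomorphisms; the error is absorbed into $\ep$ in the end. Next, using $TR(A)\leq 1$, I choose a projection $p\in A$ and a subalgebra $B\subset pAp$ of the form $\oplus_j P_j M_{k_j}(C(Y_j))P_j$ with $\dim Y_j\leq 1$, such that $L_i(C)$ is almost contained in $B\oplus (1-p)A(1-p)$, with $\tau(1-p)$ uniformly small for all $\tau\in T(A)$.

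For the "small" corner $(1-p)A(1-p)$, the measure lower bound $\mu_{\tau\circ L_i}(O_r)>\Delta(r)$ is what guarantees that the cut-down maps $(1-p)L_i(\cdot)(1-p)$ are still well-behaved enough to be matched by a Weyl--von Neumann type absorption argument. Concretely, I would replace them by finite-dimensional approximations that factor through $\oplus_{i=1}^{N}\mathrm{ev}_{x_i}$ for points $x_i$ distributed in $X$ according to $\Delta$, and then use the fact that two such finite-dimensional representations into a corner of small tracial size are approximately unitarily equivalent whenever they induce the same $K_0$-data, which is precisely the content of (\ref{unique1}) tested against $\mathcal{P}$.

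For the "large" corner $B$, I would invoke the classification of homomorphisms from a 1-dimensional NCCW (or more precisely from $PM_n(C(X))P$) into such a building block: such a homomorphism is determined up to approximate unitary equivalence by its induced map on $\underline{K}$, on $\mathrm{Aff}(T(\cdot))$, and on $U_\infty/CU_\infty$, under a measure-lower-bound hypothesis; this is where $\mathcal{P}$, $\mathcal{H}$ and $\mathcal{U}$ enter, and where Lemma \ref{L_Dist} (combined with the splitting of (\ref{E_Thomsen_es})) converts the determinant hypothesis on $\mathcal{U}$ into actual closeness of the two images in $U(B)/CU(B)$. Gluing the unitaries obtained on $B$ and on $(1-p)A(1-p)$ produces the required $W\in A$.

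The main obstacle in carrying this out from scratch is the $B$-part: one needs a full uniqueness theorem for homomorphisms from $PM_n(C(X))P$ into a 1-dimensional NCCW-type algebra with the determinant invariant included. This is where the assumption $\lim_{r\to 0}\Delta(r)=0$ combined with $\dim Y_j\leq 1$ is essential, since spectral points of $L_i$ must be matched with multiplicity, and the de la Harpe--Scandalis determinant is the obstruction that controls the $U_0/CU$ part of the comparison. Once that uniqueness theorem is in hand, everything else is bookkeeping, and the result follows as in Section~5 of \cite{Lin17}.
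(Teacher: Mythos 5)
You are right that the paper itself does not prove this theorem: it is imported verbatim as Theorem~5.3 and Corollary~5.5 of \cite{Lin17}, and simply citing Lin's paper---as you propose at the outset---is precisely what the paper does. So far so good.

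The sketch you add on top, however, has a genuine gap at the very first step. You propose to ``reduce, at the cost of slightly enlarging $\mathcal{G}$ and shrinking $\delta$, to the case where $L_1$ and $L_2$ are actual unital homomorphisms'' on the grounds that $PM_n(C(X))P$ is ``finitely presented in an approximate sense.'' For an \emph{arbitrary} compact metric space $X$, the algebra $C = PM_n(C(X))P$ is not weakly semiprojective, and there is no such perturbation argument: an almost-multiplicative c.p.c.\ map from $C$ need not be norm-close to any $*$-homomorphism. Worse, in the application in Section~\ref{S_SofTR1} this reduction would be circular. There, the uniqueness theorem is applied with $L_1 = L$ an almost-multiplicative map built from approximately rotating unitaries and $L_2 = \phi$ an honest monomorphism produced by the existence theorem; the entire point of the argument is to show a posteriori that $L$ lies near a homomorphism. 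If you could first convert $L$ into a homomorphism by a soft perturbation, you would already have proved the stability theorem and the uniqueness result would be superfluous. Lin's proof works directly at the level of $\mathcal{G}$-$\delta$-multiplicative c.p.c.\ maps, using the measure lower bound to control the spectral distribution so that an Elliott-type approximate intertwining/Berg technique argument can be run without ever passing through genuine homomorphisms from $C$.

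The rest of your outline is closer to the truth but misplaces the role of the $\Delta$-condition. The measure lower bound $\mu_{\tau\circ L_i}(O_r) > \Delta(r)$ is primarily used on the ``large'' corner $B$ (the interval- or circle-type subalgebra coming from $TR(A)\leq 1$), where it guarantees eigenvalue spread so that the two maps can be matched with multiplicity; the corner $(1-p)A(1-p)$ is controlled simply because its tracial size is small, not because the measure condition transfers to it. Since the statement is in any case a black-box input to the paper, the safe move is to cite Lin's theorem and stop, rather than to offer a sketch whose first reduction is incorrect.
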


\begin{definition} 
Let $A, C$ be two unital $C^*$-algebras. Denote by $KL(C,A)^{++}$ the set of those $\kappa\in \Hom _{\Lambda}(\underline{K}(C),\underline{K}(A))$ such that
$$\kappa(K_0(C)_+\backslash \{0\})\subset K_0(A)_+\backslash \{0\}.$$

Denote by $KL_e(C, A)^{++}$ the set of those $\kappa\in  KL(C, A)^{++}$ such that $\kappa([1_C]) = [1_A].$
\end{definition}

\begin{Notation}
Let $h \colon C \rightarrow A$ be a unital homomorphism. Denote by $[h]$ the induced homomorphism on the K-groups.
Denote by $h_\sharp$ the induced map on the tracial state spaces. Denote by $h^{\ddagger}$ the induced map
\[
h^{\ddagger} \colon  U(M_{\infty}(C))/CU(M_{\infty}(C)) \rightarrow U(M_{\infty}(A))/CU(M_{\infty}(A)).
\]
\end{Notation}

Note that if $A$ is a simple unital separable $C^*$-algebra with tracial rank no more than one,
then it has stable rank one (see Theorem 3.6.10 of \cite{Lin-book}) and  there is an isomorphism (see Corollary 3.5 of \cite{Lin-JFA-2010})
\[
U(A)/CU(A) \cong U(M_{\infty}(A))/CU(M_{\infty}(A)).
\]

\begin{definition} 
Let $\kappa \in KL_e(C, A)^{++}$ and let $\lambda : T(A) \rightarrow  T(C)$ be a continuous affine map.
We say that $\lambda$ is compatible with $\kappa$ if $\lambda$ is compatible with $\kappa|_{K_0(C)}.$
(i.e. $\rho_C([p])(\lambda(\tau)) = \rho_{A}(\kappa([p]))(\tau)$, for any projection $p \in M_{\infty}(C)$ and any tracial state $\tau\in T(A)$).
Let
\[
\alpha \colon U(M_{\infty}(C))/CU(M_{\infty}(C)) \rightarrow U(A)/CU(A)
\]
be a continuous homomorphism. Then by the exact sequence (\ref{E_Thomsen_es}),
there is an induced map $\af_1 \colon K_1(C) \rightarrow K_1(A)$.
We say $\alpha$ and $\kappa$ are compatible, if $\alpha_1 = \kappa|_{K_1(C)}$.
We say $\alpha, \lambda$ and $\kappa$ are compatible if $\lambda, \kappa$ are compatible, $\alpha, \lambda$ are compatible
and $\alpha, \kappa$ are compatible.
\end{definition}

\begin{theorem}(Theorem 6.11 of \cite{Lin17})\label{T_exist}
Let $C$ be a unital AH-algebra and let $A$ be a unital infinite
dimensional separable simple $C^*$-algebra with $TR(A) \leq 1$.
Then for any $\kappa \in KL_e(C, A)^{++}$, 
any affine continuous map $\gamma \colon T(A) \rightarrow T_f(C)$
and any continuous homomorphism 
\[
\alpha \colon U(M_{\infty}(C))/CU(M_{\infty}(C)) \rightarrow U(A)/CU(A)
\]
such that $\kappa, \gamma$ and $\alpha$ are compatible, there is a unital monomorphism
$h \colon C \rightarrow A$ such that
\[
[h] = \kappa, \quad h_\sharp = \gamma \quad \text{and\quad} 
h^{\ddagger} = \alpha.
\]
\end{theorem}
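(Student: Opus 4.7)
The plan is to prove this existence theorem via a one-sided Elliott approximate intertwining argument, with Theorem \ref{T_unique} providing the uniqueness input. Write $C = \varinjlim_n C_n$ as an inductive limit of finite direct sums of homogeneous algebras $C_n = \bigoplus_i P_{n,i} M_{k_{n,i}}(C(X_{n,i})) P_{n,i}$, and fix an exhausting sequence of finite subsets $\mathcal{F}_1 \subset \mathcal{F}_2 \subset \cdots \subset C$ together with a summable positive sequence $\{\varepsilon_n\}$. The goal is to produce unital $\mathcal{G}_n$-$\delta_n$-multiplicative c.p.c. maps $L_n \colon C \to A$ matching $(\kappa, \gamma, \alpha)$ on progressively larger finite subsets of invariants, together with unitaries $W_n \in A$ such that $\mathrm{Ad}(W_n \cdots W_1) \circ L_n$ is norm-Cauchy on each $\mathcal{F}_k$; the limit will be the desired monomorphism $h$.

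For the construction of $L_n$, I would decompose each homogeneous summand of $C_n$ using a partition of unity subordinate to a fine cover of the spectrum $X_{n,i}$, and realize $L_n$ as a direct sum of a model homomorphism built from point-evaluations (weighted by the measures $\mu_{\gamma(\tau)}$) plus a small corrective piece. The weights are chosen so that $\tau \circ L_n$ approximates $\gamma(\tau)$ on $\mathcal{H}_n$; the $K$-theoretic data is matched by using $\kappa$ to assign appropriate $K_0$ and $K_1$ classes of $A$ to the point-evaluated summands; and the lower bound $\mu_{\tau \circ L_n}(O_r) > \Delta(r)$ needed for the uniqueness theorem is ensured by choosing the partition fine enough together with the fact that $\gamma(\tau) \in T_f(C)$ has full-support measure. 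Matching $\alpha$ on $\mathcal{U}_n \subset U_c(K_1(C))$ uses that $A$ has stable rank one (inherited from $TR(A) \leq 1$), hence $U(A)/CU(A) \cong U_\infty(A)/CU_\infty(A)$: the splitting of the exact sequence \eqref{E_Thomsen_es} allows one to correct $\overline{\langle L_n(u) \rangle}$ multiplicatively by unitaries in $U_0(A)$ whose de la Harpe--Scandalis determinants compensate the discrepancy between the $\mathrm{Aff}(T(A))/\overline{\rho_A(K_0(A))}$-part of $\alpha(\bar u)$ and that of $\overline{L_n(u)}$, while the $K_1$-part of $\alpha(\bar u)$ is absorbed into $\kappa$. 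Compatibility of $\kappa, \gamma, \alpha$ makes these constructions consistent.

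Given such $L_n$, Theorem \ref{T_unique} applied to $(L_n, L_{n+1})$ -- after refining $\mathcal{P}_n, \mathcal{H}_n, \mathcal{U}_n, \mathcal{G}_n, \delta_n$ sufficiently at stage $n+1$ -- yields a unitary $W_n \in A$ with $\|\mathrm{Ad}\, W_n \circ L_{n+1}(f) - L_n(f)\| < \varepsilon_n$ for all $f \in \mathcal{F}_n$. A standard telescoping argument then shows that $\mathrm{Ad}(W_n W_{n-1} \cdots W_1) \circ L_n$ converges pointwise on $\bigcup_n \mathcal{F}_n$, hence on all of $C$, to a unital homomorphism $h \colon C \to A$. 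By construction $[h] = \kappa$, $h_\sharp = \gamma$, and $h^{\ddagger} = \alpha$; injectivity follows because $\gamma$ takes values in $T_f(C)$, so for any $\tau \in T(A)$ (which is nonempty by stable finiteness of $A$) the composition $\tau \circ h = \gamma(\tau)$ is a faithful trace on $C$, forcing $h$ to have trivial kernel.

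The main obstacle is the existence step for the unitary-quotient data $\alpha$. Realizing $\kappa$ and $\gamma$ separately via direct-sum constructions is fairly standard AH technology, but simultaneously controlling the image in $U(A)/CU(A)$ requires gluing the $K_1$-part and the infinitesimal $\mathrm{Aff}/\overline{\rho}$-part via the splitting of \eqref{E_Thomsen_es} in a manner compatible with the already-chosen $K_0$-trace matching. Lemma \ref{L_Dist} is the key quantitative tool, reducing verification of the distance condition in Theorem \ref{T_unique} to producing commutator-unitary factorizations close to $1$ in norm; ensuring these exist simultaneously for every $u \in \mathcal{U}_n$ without disturbing the $K$-theory and tracial data is where most of the technical effort concentrates.
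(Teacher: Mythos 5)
The paper does not actually prove this statement: Theorem \ref{T_exist} is quoted as Theorem 6.11 of \cite{Lin17}, so there is no internal proof to compare against. Judged on its own terms, the outer shell of your proposal (exhaust $C$ by homogeneous building blocks, produce maps approximately realizing the invariant data, intertwine consecutive stages with the uniqueness theorem \ref{T_unique}, pass to the limit, and deduce injectivity from faithfulness of $\gamma(\tau)$) is the standard architecture, and the telescoping and injectivity steps you describe are fine.

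The genuine gap is the existence step, which you compress into ``a model homomorphism built from point-evaluations plus a small corrective piece.'' Point-evaluation maps from $PM_k(C(X))P$ factor through finite-dimensional algebras, so they induce the zero map on $K_1$, on all the coefficient groups $K_*(C;\Z/m\Z)$ inside $\underline{K}(C)$, and on the infinitesimal part $\mathrm{Aff}(T(C))/\overline{\rho_C(K_0(C))}$ of $U(M_\infty(C))/CU(M_\infty(C))$. Hence the ``corrective piece'' is not small: it must carry all of $\kappa|_{K_1}$, the torsion-coefficient data of $\kappa$, the nontrivial $K_0$-classes (whose images must be strictly positive with prescribed pairing against $\gamma$), and the determinant part of $\alpha$. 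Constructing a unital homomorphism from a higher-dimensional homogeneous algebra into $A$ with all of this prescribed is exactly the content of Lin's theorem; it needs the $KL$/UCT machinery to realize $\kappa$, the tracially approximately interval-algebra structure and classification results for $TR(A)\le 1$, and a surjectivity statement for the de la Harpe--Scandalis determinant onto $\mathrm{Aff}(T(A))/\overline{\rho_A(K_0(A))}$ to adjust the $U/CU$ data --- none of which is supplied by your sketch. Note also that you cannot fix $\alpha$ by multiplying $\langle L_n(u)\rangle$ by a unitary of prescribed determinant: altering the value of $L_n$ at one unitary destroys approximate multiplicativity, so the correction has to be built into the map itself (for instance as an additional direct summand), which is again precisely the hard existence content. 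As written, your argument assumes the substance of Theorem 6.11 of \cite{Lin17} rather than proving it; the paper's route of simply citing that theorem is what your proposal would in effect have to fall back on.
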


\begin{proposition}\label{P_RAisAH}
For any $n \times n$ real skew-symmetric matrix  $\Theta$, 
there are $C_m$ which are direct sum  of homogeneous algebras whose $K_0$ and $K_1$ groups are finitely generated free abelian groups, for $m=1,2,\dots,$ such that $\mathcal{A}_{\Theta}= \underrightarrow{\lim} \,C_m.$  In particular,
$\mathcal{A}_{\Theta}$ is an $AH$-algebra. 

\end{proposition}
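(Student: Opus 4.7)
The plan is to proceed by induction on the matrix size $n$. The base case $n=1$ is immediate: $\mathcal{A}_\Theta = C(\T)$ is itself a single homogeneous block with $K_0 = K_1 = \Z$, both finitely generated and free.

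For the inductive step, I would reuse the decomposition appearing in the proof of Proposition \ref{P_KofNCT}: writing $\Theta$ with an $(n-1)\times(n-1)$ upper-left block $\Theta'$, one has $\mathcal{A}_\Theta \cong \mathcal{A}_{\Theta'} \rtimes_\alpha \Z$, where $\alpha$ acts on the canonical generators by multiplication by the scalars $e^{2\pi i \theta_{j,n}}$. In particular, $\alpha$ is homotopic to the identity through the continuous path $\alpha_t(u_j) = e^{2\pi i t \theta_{j,n}} u_j$, $t\in[0,1]$. By the induction hypothesis, $\mathcal{A}_{\Theta'} \cong \varinjlim B_m$ with each $B_m$ a direct sum of homogeneous algebras having finitely generated free $K$-groups.

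The main task is then to realize $\mathcal{A}_{\Theta'} \rtimes_\alpha \Z$ itself as such an inductive limit. The strategy I would pursue is to perturb $\alpha|_{B_m}$ to an automorphism $\alpha_m$ that is implemented (up to arbitrarily small error on a prescribed finite set) by a unitary inside $B_m$; each approximating crossed product $B_m \rtimes_{\alpha_m}\Z$ is then isomorphic to $B_m \otimes C(\T)$, which is again a direct sum of homogeneous algebras, and by the K\"unneth theorem its $K$-groups are $K_*(B_m) \oplus K_{*-1}(B_m)$, still finitely generated and free. Letting the implementing unitaries track $\alpha$ as $m \to \infty$ produces the required inductive limit description of $\mathcal{A}_\Theta$. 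Alternatively, one could simply invoke existing structure theorems for noncommutative tori due to Q.\ Lin and N.C.\ Phillips in the nondegenerate (hence simple) case, and reduce the degenerate case via Rieffel's Morita equivalence $\mathcal{A}_\Theta \sim_M C(\T^n)$ for rational $\Theta$ combined with iterated crossed products in general.

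The main obstacle is controlling the $K$-theory at each finite stage so that it remains finitely generated and free rather than acquiring torsion that only vanishes in the limit. This is precisely where the homotopy from $\alpha$ to the identity pays off: the Pimsner--Voiculescu six-term exact sequence collapses, exactly as in the proof of Proposition \ref{P_KofNCT}, to the short exact sequences (\ref{esK0}) and (\ref{esK1}); since these consist of free finitely generated abelian groups and hence split, freeness is preserved at every stage of the construction, giving the required AH-presentation of $\mathcal{A}_\Theta$.
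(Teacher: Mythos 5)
Your inductive step contains a genuine gap, and it is located exactly where the real mathematical content of this proposition lies. The plan to perturb $\alpha|_{B_m}$ to an automorphism approximately implemented by a unitary \emph{inside} $B_m$, conclude $B_m\rtimes_{\alpha_m}\Z\cong B_m\otimes C(\T)$, and then let the implementing unitaries ``track'' $\alpha$, already fails at the first nontrivial case $n=2$: there $B_m=C(\T)$ is commutative, so $\mathrm{Ad}(w)$ is the identity for every unitary $w\in B_m$, while $\|\alpha(z)-z\|=|e^{2\pi i\theta}-1|$ is bounded away from $0$ for irrational $\theta$; hence no unitary in $B_m$ implements $\alpha$ even approximately on the finite set $\{z\}$. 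Moreover, even when an automorphism \emph{is} close to an inner one on a finite set, the crossed product is not stable under such perturbations of the action: otherwise $\mathcal{A}_\theta=C(\T)\rtimes_\theta\Z$ would be isomorphic to $C(\T^2)$ for small irrational $\theta$, which is false ($\mathcal{A}_\theta$ is simple). The statement that $\mathcal{A}_\theta$ is an A$\T$-algebra is the Elliott--Evans theorem, and its higher-dimensional simple analogue is Phillips' theorem; neither can be recovered by the formal ``approximate the dual automorphism by inner ones and take limits'' argument, and the homotopy $\alpha\simeq\mathrm{id}$ (which only kills the PV boundary maps on $K$-theory) does not help here.

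The paper's proof avoids any local analysis of the crossed product. It first invokes Elliott--Li and H.~Li to write $\mathcal{A}_\Theta$ as strongly Morita equivalent to $\mathcal{A}_{\widetilde\Theta}\otimes C(\T^{n-l})$ with $\widetilde\Theta$ nondegenerate, then cites Phillips' theorem that $\mathcal{A}_{\widetilde\Theta}$ is a simple A$\T$-algebra of real rank zero, realizes $\mathcal{A}_\Theta$ as a corner $PM_N(\mathcal{A}_{\widetilde\Theta}\otimes C(\T^{n-l}))P$, pulls the projection $P$ back to the finite stages of the inductive limit, and finally uses Hunton--Shchukin to see that the resulting homogeneous blocks (over tori) have finitely generated free $K$-groups. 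Your ``alternatively'' sentence points at some of these ingredients but does not close the gap: Morita equivalence of $\mathcal{A}_\Theta$ with $C(\T^n)$ holds only for rational $\Theta$, and ``iterated crossed products in general'' is precisely the step your main argument cannot carry out; the general degenerate (irrational) case needs the Elliott--Li/Li reduction to the nondegenerate-times-commutative form, plus the passage from stable isomorphism to a corner and the $K$-theory bookkeeping at finite stages, none of which appears in the proposal.
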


\begin{proof}
By Proposition 3.3 of \cite{Elliott-Lihanfeng} and 
Theorem 1.1 of \cite{Li-2004}, there is an integer $l$
with $0 \leq l \leq n$ and a non-degenerate $l \times l$
 real skew-symmetric matrix $\widetilde{\Theta}$ such that
 $\mathcal{A}_{\Theta}$ is strongly Morita equivalent to 
 $\mathcal{A}_{\widetilde{\Theta}} \otimes C(\T^{n-l})$.
So there is an isomorphism
\[
\psi \colon \mathcal{A}_{\Theta} \otimes \mathcal{K} \rightarrow \mathcal{A}_{\widetilde{\Theta}} \otimes C(\T^{n-l}) \otimes \mathcal{K},
\]
where $\mathcal{K}$ is the algebra of compact operators.
Then
\[
\mathcal{A}_{\Theta}=
(1_{\mathcal{A}_{\Theta}}\otimes e_{11})(\mathcal{A}_{\Theta}\otimes \mathcal{K})(1_{\mathcal{A}_{\Theta}}\otimes e_{11})
\cong \psi(1_{\mathcal{A}_{\Theta}}\otimes e_{11}) (\mathcal{A}_{\widetilde{\Theta}} \otimes C(\T^{n-l})\otimes \mathcal{K})\psi(1_{\mathcal{A}_{\Theta}}\otimes e_{11}).
\]
We can find a projection $P\in M_{N}(\mathcal{A}_{\widetilde{\Theta}} \otimes C(\T^{n-l}))$ which is equivalent to $\psi(1_{\mathcal{A}_{\Theta}}\otimes e_{11})$ for some
$N \in \mathbb{N}.$ So $\mathcal{A}_{\Theta}\cong P M_{N}(\mathcal{A}_{\widetilde{\Theta}} \otimes C(\T^{n-l}))P$. Since $\widetilde{\Theta}$ is non-degenerate, $\mathcal{A}_{\widetilde{\Theta}}$ is a simple $A\mathbb{T}$-algebra with real rank zero by Theorem 3.8 of \cite{Phillips-06}, we may write 
$$\mathcal{A}_{\widetilde{\Theta}}=\lim_{m\rightarrow \infty} \bigoplus _{1 \leq i \leq k_m} M_{s_{m, i}}(C(\T)), \quad k_m, s_{m, i} 
\text{\,\, are integers}.$$
Notice that $$P\in M_{N}(\mathcal{A}_{\widetilde{\Theta}} \otimes C(\T^{n-l}))=\lim_{m\rightarrow \infty} \bigoplus _{1 \leq i \leq k_m}M_{N s_{m, i}}(C(\T)\otimes C(\T^{n-l}) ),$$ we can find projections $P_{m,i}\in M_{N s_{m, i}}(C(\T)\otimes C(\T^{n-l}) )$ for sufficiently large $m$ (we can take subsequences on $m$ here)
such that
$$\mathcal{A}_{\Theta}=\lim_{m\rightarrow \infty} \bigoplus _{1 \leq i \leq k_m}P_{m,i} M_{s_{m, i}}(C(\T)\otimes C(\T^{n-l}))P_{m,i}, \quad k_m, s_{m, i}
\text{\,\, are integers}.$$
Let
\[
C_m = \bigoplus _{1 \leq i \leq k_m} P_{m, i} M_{m, i}(C(\T)\otimes C(\T^{n-1}))P_{m, i}, \quad k_m, s_{m, i} 
\text{\,\, are integers},
\]
then
 $\mathcal{A}_{\Theta} = \underrightarrow{\lim} \,C_m$. 
The K-theory of a unital homogeneous algebra is isomorphic to the K-theory of it's central subalgebra by
Theorem 1.2 of \cite{HM}, so $K$-groups of $P_{m, i} M_{m, i}(C(\T)\otimes C(\T^{n-1}))P_{m, i}$ are finitely generated free abelian groups, thus the $K$-groups of $C_m$ are finitely generated free abelian groups.
\end{proof}

\begin{proposition} \label{P_sameK0}
(Lemma 2.13 of \cite{Elliott-1984}) Let $\Theta$ be a $n \times n$ real skew-symmetric matrix. Then all tracial states $\tau$ on $\mathcal{A}_\Theta$
induce the same map on $K_0(\mathcal{A}_\Theta)$.
\end{proposition}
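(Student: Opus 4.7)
The plan is to exploit the dual gauge action of $\mathbb{T}^n$ on $\mathcal{A}_\Theta$, together with an averaging argument, to force every tracial state to agree with the canonical trace $\tau_\Theta$ on $K_0$.

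First, I would fix the strongly continuous action $\alpha \colon \mathbb{T}^n \to \operatorname{Aut}(\mathcal{A}_\Theta)$ given on generators by $\alpha_t(u_j) = t_j u_j$, and recall that the associated conditional expectation $E \colon \mathcal{A}_\Theta \to \mathbb{C} \cdot 1_{\mathcal{A}_\Theta}$, defined by $E(a) = \int_{\mathbb{T}^n} \alpha_t(a)\,dt$, coincides with the canonical trace $\tau_\Theta$ (under the identification of scalar multiples of $1_{\mathcal{A}_\Theta}$ with $\mathbb{C}$). The key observation is that for every $t = (e^{2\pi i r_1}, \dots, e^{2\pi i r_n}) \in \mathbb{T}^n$, the path $s \mapsto \alpha_{(e^{2\pi i s r_1}, \dots, e^{2\pi i s r_n})}$, $s \in [0,1]$, is a continuous path in $\operatorname{Aut}(\mathcal{A}_\Theta)$ from $\operatorname{id}$ to $\alpha_t$. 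Consequently, for any projection $p \in M_k(\mathcal{A}_\Theta)$, the path $s \mapsto (\alpha_{t^s} \otimes \operatorname{id})(p)$ is a norm-continuous path of projections, so $(\alpha_t \otimes \operatorname{id})(p)$ is homotopic (hence Murray--von Neumann equivalent) to $p$ in $M_k(\mathcal{A}_\Theta)$.

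Next, for any tracial state $\tau$ on $\mathcal{A}_\Theta$ I would use the above homotopy to conclude that $\tau^{\oplus k}\bigl((\alpha_t \otimes \operatorname{id})(p)\bigr) = \tau^{\oplus k}(p)$ for every $t \in \mathbb{T}^n$ and every projection $p \in M_k(\mathcal{A}_\Theta)$. Integrating in $t$ against the Haar measure and interchanging $\tau^{\oplus k}$ with the Bochner integral (using strong continuity of $\alpha$),
\begin{equation*}
\tau^{\oplus k}(p) \;=\; \int_{\mathbb{T}^n} \tau^{\oplus k}\!\bigl((\alpha_t\otimes \operatorname{id})(p)\bigr)\,dt \;=\; \tau^{\oplus k}\!\left((E\otimes \operatorname{id})(p)\right).
\end{equation*}
Since $(E\otimes \operatorname{id})(p)$ is a scalar matrix in $M_k \subset M_k(\mathcal{A}_\Theta)$, its value under $\tau^{\oplus k}$ equals $\operatorname{Tr}(E(p))$, independent of the choice of $\tau$. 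In particular, applying the same computation to $\tau_\Theta$ gives $\tau_\Theta^{\oplus k}(p) = \operatorname{Tr}(E(p))$ as well, so $\rho_{\mathcal{A}_\Theta}(\tau) = \rho_{\mathcal{A}_\Theta}(\tau_\Theta)$ on $K_0(\mathcal{A}_\Theta)$, proving the statement.

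There is no serious obstacle; the only things needing some care are the homotopy-invariance of $K_0$-values under automorphisms connected to the identity and the justification for exchanging the tracial state with the Haar integral. Both are standard once the gauge action is set up. An alternative, more computational route would be to use Proposition \ref{P_KofNCT} (and its inductive analog for general $n$ via Pimsner--Voiculescu), combined with the generalized Exel trace formula \ref{trace formula} applied to each basis element $b_{u_j, u_k}^{\theta_{j,k}}$, to read off the values of $\tau$ on a set of $K_0$-generators; however, the averaging argument above handles all $n$ uniformly and avoids choosing an explicit basis.
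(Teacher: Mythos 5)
Your argument is correct. Note, however, that the paper itself does not prove this proposition at all: it simply cites Lemma 2.13 of Elliott's 1984 article, so your write-up supplies an argument where the paper gives only a reference. Your route --- using the dual (gauge) action $\alpha\colon\mathbb{T}^n\to\operatorname{Aut}(\mathcal{A}_\Theta)$, the fact that each $\alpha_t$ is connected to the identity by a point-norm continuous path so that $(\alpha_t\otimes\operatorname{id})(p)$ is homotopic, hence Murray--von Neumann equivalent, to $p$, and then averaging against Haar measure to replace $p$ by the scalar matrix $(E\otimes\operatorname{id})(p)$ --- is a standard and fully self-contained way to get the statement; it also dovetails with the paper's own description of $\tau_\Theta$ as integration over the action of $\widehat{\mathbb{Z}^n}$, and it yields the slightly stronger fact actually used in the proof of Theorem \ref{T_stabilityTR1}, namely that every tracial state agrees with $\tau_\Theta$ on $K_0(\mathcal{A}_\Theta)$. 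The only points that deserve a word of justification (and which you correctly flag) are the point-norm continuity of $s\mapsto\alpha_{t^s}$ on matrix amplifications and the interchange of the bounded functional $\tau^{\oplus k}$ with the Bochner integral; both are routine. Your alternative suggestion of reading off trace values on an explicit $K_0$-basis via Proposition \ref{P_KofNCT} and the Exel trace formula would also work for $n=3$ but requires the basis description and does not extend as cleanly to general $n$, so the averaging argument is the better choice here.
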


\begin{proposition}\label{P_AlmostHExist}
Let $\Theta  = (\theta_{j, k})_{ n \times n}\in \mathcal{T}_n$ 
be a $n \times n$ skew-symmetric matrix.
Let $u_1, u_2, \dots, u_n$ 
be the canonical generators of $\mathcal{A}_\Theta$.
Then for any finite subset $\mathcal{G} \subset \mathcal{A}_\Theta$, any $\eta > 0$ and 
any $\ep > 0$, there is a $\delta > 0$ such that
for any unital $C^*$-algebra $A$, 
any $n$-tuple of unitaries $v_1, v_2, \dots, v_n$ in $A$ satisfying
\[
\|v_kv_j - e^{2\pi i \theta_{j, k}}v_jv_k\| < \delta,\quad j, k = 1, 2, \dots, n,
\]
there is a unital $\mathcal{G}$-$\eta$ multiplicative 
c.p.c (completely positive contractive) map 
$L \colon \mathcal{A}_\Theta \rightarrow A$ such that
$\|L(u_i) - v_i\| < \ep$.
\end{proposition}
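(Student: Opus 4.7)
I propose proving this by a compactness/ultrapower contradiction argument, using the Choi--Effros lifting theorem together with the universal property of $\mathcal{A}_\Theta$.

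Suppose the statement fails. Then there exist $\mathcal{G}\subset\mathcal{A}_\Theta$, $\eta>0$, $\varepsilon>0$, a sequence of unital $C^*$-algebras $A_m$ and $n$-tuples of unitaries $v_{1,m},\dots,v_{n,m}\in A_m$ with
\[
\|v_{k,m}v_{j,m}-e^{2\pi i\theta_{j,k}}v_{j,m}v_{k,m}\|<\tfrac{1}{m}\quad \text{for all }j,k,
\]
but such that no unital $\mathcal{G}$-$\eta$-multiplicative c.p.c.\ map $L_m\colon\mathcal{A}_\Theta\to A_m$ satisfies $\|L_m(u_j)-v_{j,m}\|<\varepsilon$ for all $j$. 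This is the setup in which we seek a contradiction.

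Set $B=\prod_m A_m /\bigoplus_m A_m$ and let $\pi\colon\prod_m A_m\to B$ be the quotient map. Put $V_j=\pi(v_{j,1},v_{j,2},\dots)$. Since $\|v_{k,m}v_{j,m}-e^{2\pi i\theta_{j,k}}v_{j,m}v_{k,m}\|\to 0$ as $m\to\infty$, the unitaries $V_1,\dots,V_n\in B$ satisfy $V_kV_j=e^{2\pi i\theta_{j,k}}V_jV_k$ exactly. By the universal property of $\mathcal{A}_\Theta$, there is a unital $*$-homomorphism $\phi\colon\mathcal{A}_\Theta\to B$ with $\phi(u_j)=V_j$ for $j=1,\dots,n$.

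Now I would invoke the Choi--Effros lifting theorem: since $\mathcal{A}_\Theta\cong C(\T^n,\sigma)\rtimes\Z^n$ (a twisted crossed product by the amenable group $\Z^n$) is nuclear and separable, the unital completely positive map $\phi$ lifts to a unital c.p.c.\ map $\widetilde\phi\colon \mathcal{A}_\Theta\to\prod_m A_m$. Writing $\widetilde\phi(a)=(L_m(a))_{m\in\N}$ yields unital c.p.c.\ maps $L_m\colon\mathcal{A}_\Theta\to A_m$. Because $\phi$ is multiplicative, for every pair $a,b\in\mathcal{A}_\Theta$ the sequence $(L_m(ab)-L_m(a)L_m(b))_m$ lies in $\bigoplus_m A_m$, so $\|L_m(ab)-L_m(a)L_m(b)\|\to 0$. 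Similarly $\|L_m(u_j)-v_{j,m}\|\to 0$ for each $j$. Since $\mathcal{G}$ and $\{u_1,\dots,u_n\}$ are finite sets, for all sufficiently large $m$ the map $L_m$ is $\mathcal{G}$-$\eta$-multiplicative and $\|L_m(u_j)-v_{j,m}\|<\varepsilon$ for every $j$, contradicting the standing hypothesis.

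The main technical point is the Choi--Effros lifting step: we need both that $\mathcal{A}_\Theta$ is nuclear (which is standard for noncommutative tori, as $\Z^n$ is amenable) and that the u.c.p.\ lift can be chosen unital (which follows from the classical Choi--Effros theorem applied to unital quotient maps, or alternatively can be arranged by a routine adjustment). Everything else reduces to bookkeeping in the product/quotient construction.
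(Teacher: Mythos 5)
Your proposal is correct and follows essentially the same route as the paper: a contradiction argument through the product/quotient algebra $\prod_m A_m/\bigoplus_m A_m$, the universal property of $\mathcal{A}_\Theta$ applied to the exactly commuting-up-to-phase unitaries in the quotient, and a Choi--Effros lift whose coordinate maps are eventually $\mathcal{G}$-$\eta$-multiplicative and $\varepsilon$-close to the $v_{j,m}$. Your added remarks on nuclearity of $\mathcal{A}_\Theta$ and unitality of the lift only make explicit what the paper leaves implicit.
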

\begin{proof}
Assume that the proposition is false. 
Let $(\delta_m)_{m=1}^{\infty}$ be a sequence of positive numbers
decreasing to $0$.
Then there is a finite subset $\mathcal{G} \subset \mathcal{A}_\Theta$,
some $\ep, \eta > 0$ such that for any $m$,
there is a unital $C^*$-algebra $A_m$, an $n$-tuple of 
of unitaries  $v_1^{(m)}, v_2^{(m)}, \dots, v_n^{(m)}$ in $A_m$ satisfying
\[
\|v_k^{(m)}v_j^{(m)} - e^{2\pi i \theta_{j, k}}v_j^{(m)}v_k^{(m)}\| < \delta_m,
\quad j, k = 1, 2, \dots, n,
\]
but for any unital $\mathcal{G}$-$\eta$-multiplicative 
c.p.c  map 
$\phi_m \colon \mathcal{A}_\Theta \rightarrow A_m$, we have
$\|\phi_m(u_i) - v_i^{(m)}\| \geq \ep$.

Set $B = \prod_{m=1}^{\infty} A_m / \oplus_{m=1}^{\infty} A_m$.
Let $\pi \colon \prod_{m=1}^{\infty} A_m \rightarrow B$ 
be the canonical quotient map.
Let $v_j = (v_j^{(m)}) \in \prod_{m=1}^{\infty} A_m$.
Then $\{\pi(v_j)\}$ are unitaries satisfying the rotation relation 
with respect to $\Theta$, i.e. $\pi(v_k)\pi(v_j)=e^{2\pi i\theta_{j,k}}\pi(v_j)\pi(v_k).$  
Therefore there is a unital homomorphism 
$\phi \colon \mathcal{A}_\Theta \rightarrow B$. 
By the Choi-Effros lifting theorem, we can lift $\phi$ to a
unital c.p.c map
\[
\tilde{\phi} = (\phi_1, \phi_2, \dots,\phi_m,\dots) 
\colon \mathcal{A}_\Theta \rightarrow \prod_{m=1}^{\infty} A_m.
\]
In particular, each coordinate map $\phi_m$ is unital completely positive,
and we can also assume that they are contractive by normalization. 
By choosing $m$ large enough, we can make sure that $\phi_m$
are $\mathcal{G}$-$\eta$-multiplicative. From our construction, 
\[
\lim_{m \rightarrow \infty} \|\phi_m(u_i) - v_i^{(m)}\| = 0.
\]
This is a contradiction.
\end{proof}

The following follows from functional calculus
and the fact that norm close projections (unitaries) are equivalent:
\begin{lemma}\label{L_AlmostHK0}
Let $\Theta  = (\theta_{j, k})_{ n \times n}\in \mathcal{T}_n$ 
be a $n \times n$ skew-symmetric matrix.
Let $u_1, u_2, \dots, u_n$ be the canonical generators of $\mathcal{A}_\Theta$.
For $k=1,2,\dots,n$ and $j=1,2,\dots,n,$ let $b_{u_k, u_j}$ be the elements of $K_0(\mathcal{A}_\Theta)$ as defined
in Definition \ref{dbotA}. Then 
there exists a finite subset $\mathcal{G} \subset \mathcal{A}_\Theta$, some $\eta > 0$ and $\ep>0$, such that for any unital C*-algebra $A$, 
any $n$-tuple of unitaries $v_1, v_2, \dots, v_n$ in $A$, if $L \colon \mathcal{A}_\Theta \rightarrow A$ 
is a $\mathcal{G}$-$\eta$-multiplicative 
c.p.c. map such that $\|L(u_i) - v_i\| < \ep$,
then
\[
[L](b_{u_k, u_j}) = b_{v_k, v_j}^{\theta_{j,k}}, \quad j , k = 1, 2, \dots, n, j <k.
\]
and
\[
[L]([u_j]) = [v_j], \quad j = 1, 2, \dots n.
\]
\end{lemma}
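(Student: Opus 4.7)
The plan is standard: I will choose $\mathcal{G}$, $\eta$, $\ep$ so that the image under $L$ of a concrete representative of $b_{u_k,u_j}$ in $\mathcal{A}_\Theta$ is norm-close to a true projection representing $b_{v_k,v_j}^{\theta_{j,k}}$ in $A$ (and similarly for $[u_j]$ and $[v_j]$), and then invoke the fact that norm-close projections (respectively unitaries) determine the same class in $K_0$ (respectively $K_1$). The $K_1$ statement is immediate: for $\ep < 1$, $L(u_j)$ is invertible, and its polar part $\langle L(u_j)\rangle = L(u_j)(L(u_j)^*L(u_j))^{-1/2}$ is a unitary norm-close to $v_j$, so $[L]([u_j]) = [\langle L(u_j)\rangle] = [v_j]$ in $K_1(A)$.

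For the $K_0$ statement I write the argument in the case $\theta_{j,k}\ne 0$; the case $\theta_{j,k}=0$ goes through identically with the bott element in place of the Rieffel projection, as in \cite{Hua-Lin}. The class $b_{u_k,u_j}$ is represented in the subalgebra $\mathcal{A}_{\theta_{j,k}}\subset \mathcal{A}_\Theta$ by the Rieffel projection $p_{j,k} = g(u_k)u_j^* + f(u_k) + u_j g(u_k)$, with $f, g$ as in Definition \ref{def of Rieffel P}. I will put into $\mathcal{G}$ the elements $u_j^{\pm 1}, u_k^{\pm 1}, f(u_k), g(u_k)$, the three products appearing in $p_{j,k}$, and a sufficiently large collection of monomials in $u_k$ so that Stone--Weierstrass approximation of $f$ and $g$ by Laurent polynomials, combined with approximate multiplicativity of $L$, yields $L(f(u_k))\approx f(v_k)$ and $L(g(u_k))\approx g(v_k)$ via $L(u_k)\approx v_k$. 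Combined with one further application of approximate multiplicativity on the products in $p_{j,k}$, this gives, for $\eta$ and $\ep$ small enough,
\[
\|L(p_{j,k}) - e^{\theta_{j,k}}(v_k, v_j)\| < 1/8.
\]

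Since $p_{j,k}$ is a genuine projection, $L(p_{j,k})$ is an approximate projection, and after shrinking $\eta, \ep$ further so that $\|v_k v_j - e^{2\pi i\theta_{j,k}}v_j v_k\|$ falls below the threshold of Proposition \ref{P_RieffelEgap} (which is automatic once $L$ is approximately multiplicative on $u_j u_k$ and $u_k u_j$), the element $e^{\theta_{j,k}}(v_k, v_j)$ has a spectral gap at $1/2$. Applying $\chi_{(1/2,\infty)}$ via functional calculus therefore produces norm-close projections, and Proposition \ref{P_RieffelEConst} yields
\[
[L](b_{u_k,u_j}) = [L(p_{j,k})] = [\chi_{(\frac{1}{2},\infty)}(e^{\theta_{j,k}}(v_k, v_j))] = [R^{\theta_{j,k}}(v_k, v_j)] = b_{v_k, v_j}^{\theta_{j,k}}.
\]
The only real bookkeeping is the quantitative choice of $\mathcal{G}$ and $\eta$ in the preceding paragraph; a cleaner alternative would be a compactness argument modeled on Proposition \ref{P_AlmostHExist}, in which a hypothetical sequence of counterexamples passes, via the canonical quotient $\prod_m A_m/\oplus_m A_m$, to a genuine $*$-homomorphism $\mathcal{A}_\Theta\to \prod_m A_m/\oplus_m A_m$, and the desired equality becomes tautological from the very definitions of $b_{u_k,u_j}$ and $b_{v_k,v_j}^{\theta_{j,k}}$.
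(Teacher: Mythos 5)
Your argument is correct and is exactly what the paper leaves implicit: the paper gives no proof for this lemma, simply asserting that it ``follows from functional calculus and the fact that norm close projections (unitaries) are equivalent,'' and your proof is a faithful fleshing-out of that remark — representing $b_{u_k,u_j}$ by the Rieffel projection $p_{j,k}=e^{\theta_{j,k}}(u_k,u_j)$, showing $L(p_{j,k})$ is norm-close to $e^{\theta_{j,k}}(v_k,v_j)$ via Stone--Weierstrass and approximate multiplicativity, deducing the approximate rotation relation for the $v_i$ from that of the $u_i$ so that Proposition~\ref{P_RieffelEgap} and Proposition~\ref{P_RieffelEConst} apply, and concluding by the stability of $K_0$ classes under small norm perturbations (with the polar-decomposition argument handling $K_1$). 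The closing remark about the compactness/ultraproduct route in the style of Proposition~\ref{P_AlmostHExist} is also sound and would avoid the explicit bookkeeping.
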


\begin{lemma}[Lemma 4.1 of \cite{LnTAM}]\label{LsL1}
Let $A$ be a separable unital \CA. For any $\ep>0$ and any finite subset ${\mathcal F}\subset A_{s.a.}$, there exists $\eta>0$ and
a finite subset ${\mathcal G}\subset A_{s.a}$ satisfying the following:
For any ${\mathcal G}$-$\eta$-multiplicative \morp\, $L: A\to B,$ for any
unital \CA\, $B$ with $T(B)\not=\emptyset,$ and any tracial state $t\in T(B),$ there exists a $\tau\in T(A)$ such that
\begin{eqnarray}\label{LsL1-1}
||t\circ L(a)-\tau(a)\|<\ep\,\,\,for \,\,\,all \,\,\, a\in {\mathcal F}.\nonumber
\end{eqnarray}
\end{lemma}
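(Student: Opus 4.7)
The plan is to argue by contradiction using a standard sequence/ultraproduct compactness argument. Suppose the conclusion fails. Then there exist $\varepsilon_0 > 0$ and a finite subset $\mathcal{F} \subset A_{s.a.}$ such that for every pair $(\mathcal{G},\eta)$ with $\mathcal{G}\subset A_{s.a.}$ finite and $\eta > 0$, one can exhibit a unital $C^*$-algebra $B$ with $T(B)\neq\emptyset$, a $\mathcal{G}$-$\eta$-multiplicative c.p.c.\ map $L\colon A\to B$, and a trace $t\in T(B)$, such that no $\tau \in T(A)$ satisfies $|t\circ L(a) - \tau(a)| < \varepsilon_0$ simultaneously for all $a\in \mathcal{F}$. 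Using separability of $A$, fix an increasing sequence of finite subsets $\mathcal{G}_n \subset A_{s.a.}$ whose union is norm-dense in $A_{s.a.}$ and a sequence of positive numbers $\eta_n \to 0$, and apply the assumed failure at each $(\mathcal{G}_n,\eta_n)$ to extract data $(B_n, L_n, t_n)_{n\in \N}$.

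Next I would assemble these sequences into a single limit object. Set $B_\infty := \prod_n B_n \big/ \bigoplus_n B_n$ with quotient map $\pi$, and $L := (L_n)\colon A \to \prod_n B_n$. The asymptotic multiplicativity of $L_n$ on a family whose union is norm-dense in $A$ shows, via a routine three-$\varepsilon$ estimate, that $\Phi := \pi \circ L$ is an honest unital $*$-homomorphism from $A$ to $B_\infty$; this is the same construction that appears in the proof of Proposition \ref{P_RieffelEgap}. Fix a free ultrafilter $\omega$ on $\N$ and define
\[
t_\omega\bigl(\pi(b_1, b_2, \ldots)\bigr) := \lim_{n \to \omega} t_n(b_n).
\]
Vanishing on $\bigoplus_n B_n$ is automatic since $|t_n(b_n)| \leq \|b_n\|$, so $t_\omega$ is well-defined; it is evidently positive, unital and linear, and the trace property passes from each $t_n$ to $t_\omega$ in the obvious way, making $t_\omega$ a tracial state on $B_\infty$. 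Consequently $\tau := t_\omega \circ \Phi$ is a tracial state on $A$.

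To close the argument, observe that for each $a \in \mathcal{F}$ we have by construction $\tau(a) = \lim_{n\to\omega} t_n(L_n(a))$, so the set $S_a := \{\,n : |t_n(L_n(a)) - \tau(a)| < \varepsilon_0\,\}$ belongs to $\omega$. Since $\mathcal{F}$ is finite, the intersection $\bigcap_{a\in \mathcal{F}} S_a$ again belongs to $\omega$ and is in particular nonempty; any $n$ in it supplies a $\tau \in T(A)$ that does satisfy the required $\varepsilon_0$-closeness condition at $(L_n,t_n)$, contradicting the failure data. The point of the argument I expect to require the most care is verifying that $\Phi$ is genuinely multiplicative and that $\tau$ is genuinely tracial on arbitrary pairs of elements of $A$ (not merely those eventually lying in some $\mathcal{G}_n$): both are handled by approximating $a,b\in A$ in norm by $a',b'\in \mathcal{G}_{n_0}$, applying asymptotic multiplicativity of $L_n$ for $n\geq n_0$, and using that each $t_n$ annihilates the commutator $L_n(a')L_n(b') - L_n(b')L_n(a')$. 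A minor caveat is that the statement does not explicitly demand $L$ be unital; in the intended applications $L$ is unital and $\tau(1_A)=1$ is immediate, and otherwise one normalizes by $t_\omega(\Phi(1_A))$ (provided the data are not trivial, which can be arranged) to produce a genuine state.
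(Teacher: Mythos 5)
The paper does not give its own proof of this result; it is quoted verbatim as Lemma~4.1 of \cite{LnTAM} and used as a black box, so there is no internal proof to compare against. Your compactness/ultraproduct argument is a standard and correct way to establish statements of this kind, and I have checked the main steps: the density of $\bigcup_n \mathcal{G}_n$ in $A_{s.a.}$ together with $\eta_n \to 0$ and contractivity of each $L_n$ does yield multiplicativity of $\Phi = \pi\circ L$ on all of $A$ (extend from $A_{s.a.}$ by bilinearity); $t_\omega$ is a well-defined tracial state on $B_\infty$; the trace property of $\tau = t_\omega\circ\Phi$ follows from $\Phi$ being a $*$-homomorphism and $t_\omega$ being a trace; and the finite intersection of the ultrafilter sets $S_a$ is nonempty, yielding the contradiction. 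Since $\mathcal{F}$ is finite, the ultrafilter is a convenience rather than a necessity; one could equally well pass to a subsequence along which each $t_n(L_n(a))$, $a\in\mathcal{F}$, converges and use the limit to define $\tau$ directly, but that is only a stylistic difference.

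The one genuine soft spot is the one you flag yourself: the lemma as stated requires only that $L$ be a contractive c.p.c.\ map, not that it be unital, so $\Phi(1_A)$ is a priori only a projection $p\in B_\infty$, and $\tau = t_\omega\circ\Phi$ could fail to be a state. Your normalization fix (divide by $t_\omega(p)$) works only when $t_\omega(p)>0$; if $t_n(L_n(1_A))\to 0$ along $\omega$ the functional $\tau$ degenerates to zero and cannot be rescaled, and indeed in that degenerate regime the conclusion of the lemma can fail outright. In Lin's original formulation and in every application in this paper the map $L$ is unital, so in that setting $\Phi(1_A)=1_{B_\infty}$, $\tau(1_A)=1$, and the argument is airtight. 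It would be cleanest either to add ``unital'' to the hypothesis (matching how the lemma is actually invoked, e.g.\ in Lemma~\ref{L_autotrace}) or to observe explicitly that $1_A$ may be placed in each $\mathcal{G}_n$ and a lower bound such as $t_n(L_n(1_A))\geq 1-\eta_n$ imposed, rather than leaving the normalization to a parenthetical.
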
 

\begin{lemma} \label{L_distsame} Let $\Theta\in \mathcal{T}_3$ and let $\pi_{\Theta} \colon U_{\infty}(\mathcal{A}_{\Theta}) / CU_{\infty}(\mathcal{A}_{\Theta}) \rightarrow K_1(\mathcal{A}_{\Theta})$
 be  the natural surjective homomorphism. 
Let $[u_1], [u_2], [u_3]$ and $[u_0]$ be the set of generators of $K_1(\mathcal{A}_{\Theta})$
as described in Proposition \ref{P_KofNCT} (with $u_0$ instead of $u$).
Let $J_{\Theta} \colon K_1(\mathcal{A}_{\Theta}) \rightarrow U_{\infty}(\mathcal{A}_{\Theta}) / CU_{\infty}(\mathcal{A}_{\Theta})$
be the embedding induced by
\[
[u_j] \rightarrow \overline{u}_j, \quad j = 0, 1, 2, 3.
\]
Let $[w_1], [w_2], \dots, [w_n]$ be a finite subset of $J_{\Theta}(K_1(\mathcal{A}_{\Theta}))$. 
Then for every $\ep > 0$, there is a finite subset $\mathcal {G} \in \mathcal{A}_{\Theta}$ and a $\eta > 0$
such that,
for any unital $C^*$-algebra $A$ and any unital $\mathcal{G}$-$\eta$-multiplicative c.p.c. map
$L \colon \mathcal{A}_{\Theta} \rightarrow A$, the homomorphism
\[
\alpha_1 \colon J_{\Theta}(K_1(\mathcal{A}_{\Theta}))
\rightarrow U_{\infty}(A) / CU_{\infty}(A), \quad \alpha_1(\overline{u}_j) = \langle L(u_j) \rangle, \,\, j = 0, 1, 2, 3
\]
satisfies $\mathrm{dist}(\overline{ \langle L(w_j) \rangle }, \alpha_1([w_j])) < \ep$, for $j = 1, 2, \dots, n$.
\end{lemma}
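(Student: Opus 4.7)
The plan is to reduce the distance estimate in $U_\infty(A)/CU_\infty(A)$ to an operator-norm approximation and then invoke Lemma \ref{L_Dist}. Given $\ep>0$, first fix $\delta_0>0$ as in Lemma \ref{L_Dist}: whenever $\|wUV^*-1\|<\delta_0$ with $w\in CU_\infty(A)$, we have $\dist(\bar U,\bar V)<\ep$. Write $u_0$ for the fourth $K_1$-generator denoted $u$ in Proposition \ref{P_KofNCT}; then the abelian subgroup $J_\Theta(K_1(\mathcal A_\Theta))$ is generated by $\bar u_0,\bar u_1,\bar u_2,\bar u_3$. For each of the finitely many classes $[w_j]$, fix integers $a_j^{(0)},a_j^{(1)},a_j^{(2)},a_j^{(3)}$ with
\[
[w_j]=\bar u_0^{a_j^{(0)}}\bar u_1^{a_j^{(1)}}\bar u_2^{a_j^{(2)}}\bar u_3^{a_j^{(3)}},
\]
set $v_j=u_0^{a_j^{(0)}} u_1^{a_j^{(1)}} u_2^{a_j^{(2)}} u_3^{a_j^{(3)}}$, and note that $\beta_j:=w_j v_j^{-1}\in CU_\infty(\mathcal A_\Theta)$.

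Since $CU_\infty(\mathcal A_\Theta)$ is by definition the \emph{closure} of the commutator subgroup, approximate each $\beta_j$ within $\delta_0/4$ by a finite product of commutators $\tilde\beta_j=\prod_{k=1}^{N_j}[x_{j,k},y_{j,k}]$ with $x_{j,k},y_{j,k}$ unitaries in $M_\infty(\mathcal A_\Theta)$; these data are fixed once and for all, depending only on $\ep$ and $\{[w_j]\}$. Let $\mathcal G\subset\mathcal A_\Theta$ be a finite set containing (the matrix entries of) $u_0,u_1,u_2,u_3$, of $w_j$, $v_j$, $\tilde\beta_j$, $x_{j,k}$, $y_{j,k}$, $\tilde\beta_j v_j$, their adjoints, and of all the intermediate products needed to assemble these words by a uniformly bounded number of multiplications. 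Choose $\eta>0$ small enough that every $\mathcal G$-$\eta$-multiplicative c.p.c.\ map $L$ satisfies: (i) $L(u_i), L(x_{j,k}), L(y_{j,k}), L(w_j)$ are close enough to unitaries that their polar parts $\langle\cdot\rangle$ are well defined and within a common small constant of them; (ii) $L(v_j)$ is within $\delta_0/4$ of $\langle L(u_0)\rangle^{a_j^{(0)}}\langle L(u_1)\rangle^{a_j^{(1)}}\langle L(u_2)\rangle^{a_j^{(2)}}\langle L(u_3)\rangle^{a_j^{(3)}}$; (iii) $L(\tilde\beta_j)$ is within $\delta_0/4$ of $c_j:=\prod_{k=1}^{N_j}[\langle L(x_{j,k})\rangle,\langle L(y_{j,k})\rangle]\in CU_\infty(A)$; and (iv) $L(w_j)$ is within $\delta_0/4$ of $L(\tilde\beta_j) L(v_j)$.

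Combining (i)--(iv) together with $\|w_j-\tilde\beta_j v_j\|<\delta_0/4$ yields
\[
\bigl\|\langle L(w_j)\rangle-c_j\cdot\langle L(u_0)\rangle^{a_j^{(0)}}\langle L(u_1)\rangle^{a_j^{(1)}}\langle L(u_2)\rangle^{a_j^{(2)}}\langle L(u_3)\rangle^{a_j^{(3)}}\bigr\|<\delta_0.
\]
Applying Lemma \ref{L_Dist} with $w=c_j^*\in CU_\infty(A)$ then gives
\[
\dist\bigl(\overline{\langle L(w_j)\rangle},\,\overline{\langle L(u_0)\rangle^{a_j^{(0)}}\langle L(u_1)\rangle^{a_j^{(1)}}\langle L(u_2)\rangle^{a_j^{(2)}}\langle L(u_3)\rangle^{a_j^{(3)}}}\bigr)<\ep,
\]
and since $\alpha_1$ is a homomorphism on the abelian group $J_\Theta(K_1(\mathcal A_\Theta))$, the right-hand class is precisely $\alpha_1([w_j])$. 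The main technical subtlety is exactly that $CU_\infty(\mathcal A_\Theta)$ is only the closure of the commutator subgroup: the commutator approximants $\tilde\beta_j$, and therefore the specific factors $x_{j,k},y_{j,k}$, must be fixed \emph{before} choosing $\mathcal G$ and $\eta$, so that those factors can be placed into $\mathcal G$. Once this bookkeeping is carried out, everything else is a routine near-multiplicativity estimate together with Lemma \ref{L_Dist}.
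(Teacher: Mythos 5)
Your proposal is correct and follows essentially the same route as the paper's proof: fix in advance a representative of the word in the generators together with commutator approximants of the $CU_\infty$-part, put all these elements (and the intermediate products) into $\mathcal{G}$, use near-multiplicativity and polar decomposition to transfer the factorization to $A$, and conclude with Lemma \ref{L_Dist}. The only cosmetic differences are that the paper uses a diagonal representative $b_j$ of the word in $u_0,\dots,u_3$ and a single commutator $v_js_jv_j^*s_j^*$, whereas you use the product representative and a finite product of commutators, which changes nothing essential.
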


\begin{proof}
Let $\ep > 0$ be given. Choose $\delta > 0$ according to Lemma \ref{L_Dist}.

Let $\mathcal{F} = \{u_j, u_j^{-1} \,\vert\, j = 0, 1, 2, 3\}$. For each $k \in \N$, define
\[
\mathcal{F}_k = \{ \mathrm{diag} \{a_1, a_2, \dots, a_k\} \,\vert\, a_j \in \mathcal{F}, \,\,\text{for}\,\, j = 1, 2, \dots, k\}.
\]

It is easy to see that, there is some $N \in \N$ large enough such that, for $j = 1, 2, \dots, n$, 
we can find an element $ b_j \in \mathcal{F}_N$ and unitaries $v_j, s_j$ such that
\[
\| w_j - v_js_jv_j^*s_j^*b_j \| < \frac{\delta}{4} \quad \text{and} \quad \alpha_1(w_j) = \alpha_1(b_j).
\]

Since $\langle L(\cdot) \rangle$ is defined by continuous functional calculus
on the set of unitaries, there is a $\eta_1$ such that whenever 
$L \colon \mathcal{A}_{\Theta} \rightarrow A$ is a unital $\mathcal{F}$-$\eta_1$ multiplicative c.p.c. map,
we have $\| \langle L(u_j) \rangle ^{-1} - \langle L(u_j^{-1}) \rangle \| < \frac{\delta}{4}$, 
for $j = 0, 1, 2, 3$. 

Let $\mathcal{G}_1 = \{v_j, s_j, v_j^*, s_j^* \,\vert\, j = 1, 2, \dots, n\}$.
There is a $\eta_2$ such that whenever 
$L \colon \mathcal{A}_{\Theta} \rightarrow A$ is a unital $\mathcal{G}_1$-$\eta_2$ multiplicative c.p.c. map,
we have 
\[
\| \langle L(v_js_jv_j^*s_j^*) \rangle - 
\langle L(v_j) \rangle \langle L(s_j) \rangle \langle L(v_j) \rangle^* \langle L(s_j)\rangle^*\| < \frac{\delta}{4}, \quad
\text{for}\,\, j = 1, 2, \dots, n.
\]

Let $\mathcal{G} = \mathcal{F} \cup \mathcal{G}_1$ and let $\eta = \min \{ \eta_1, \eta_2, \frac{\delta}{4}\}$. Let $A$ be a unital $C^*$-algebra.
Suppose that $L \colon \mathcal{A}_{\Theta} \rightarrow A$ is a unital $\mathcal{G}$-$\eta$ multiplicative c.p.c. map.
Define $\alpha_1$ to be the homomorphism:
\[
\alpha_1 \colon J_{\Theta}(K_1(\mathcal{A}_{\Theta}))
\rightarrow U_{\infty}(A) / CU_{\infty}(A), \quad \alpha_1(\overline{u}_j) = \langle L(u_j)\rangle, \,\, j = 0, 1, 2, 3.
\]

By our construction, for any $b_j \in \mathcal{F}_N$, there is a unitary $c_j \in U_{\infty}(A)$
such that
\[
\alpha_1(b_j) = \bar{c}_j, \quad \text{and}\quad \|c_j - \langle L(b_j) \rangle\| < \frac{\delta}{4}. 
\]

Therefore we have $\alpha_1(w_j) = \bar{c}_j$. Moreover, we can compute that
\begin{align*}
& \|c_j -  \langle L(w_j) \rangle \langle L(v_j) \rangle  \langle L(s_j) \rangle \langle L(v_j) \rangle^* \langle L(s_j) \rangle^*\| \\
& <\|c_j -  \langle L(b_jv_js_jv_j^*s_j^*) \rangle \langle L(v_j) \rangle  \langle L(s_j) \rangle \langle L(v_j) \rangle^* \langle L(s_j) \rangle^* \| + \frac{\delta}{4} \\
& < \eta + \frac{\delta}{4} + \frac{\delta}{4} + \|c_j - \langle L(b_j) \rangle \| < \delta.
\end{align*}

By Lemma \ref{L_Dist}, we have $\mathrm{dist}(\alpha_1(w_j), \langle L(w_j) \rangle) < \ep$, for  $j=1,\dots,n$.
\end{proof}

\begin{lemma} \label{L_Delta}
Let $(X, d)$ be an infinite compact metric space. Let $C = PM_n(C(X))P$, where $P$ is a projection in $M_n(C(X))$. 
Let $\tau_0$ be a faithful tracial state on $C$. 
Define a function $\Delta \colon (0, 10) \rightarrow (0, 1)$ by
\[
\Delta(r) = \min \{ \mu_{\tau_0}(O_{\frac{r}{10}}(x)) \,\vert\,  x \in X\}, 
\quad 0 < r < 10.
\]
Then $\Delta$ is a strictly increasing function such that $\lim_{r \rightarrow 0} \Delta(r) = 0$.

Moreover, for any $\eta > 0$, there exists a finite subset $\mathcal{H} \in C_+ \backslash \{ 0 \}$ 
and a $\delta > 0$ such that, whenever $L \colon C \rightarrow A$ is a c.p.c. map 
and $\tau$ is a tracial state on $A$, if
\[
|\tau \circ L (h) - \tau_0(h)| < \delta, \quad \text{for all \,\,} h \in \mathcal{H},
\]
then $\mu_{\tau \circ L}(O_r) > \Delta(r)$, for all $r \geq \eta$.
\end{lemma}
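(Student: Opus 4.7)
\textbf{Proof plan for Lemma~\ref{L_Delta}.} The argument splits into the structural claims on $\Delta$ and the construction of $\mathcal{H}$ and $\delta$.

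For the structural claims, monotonicity is immediate from $O_{s/10}(x) \subset O_{r/10}(x)$ whenever $s \leq r$. For positivity I combine compactness of $X$ with faithfulness of $\tau_0$: cover $X$ by finitely many balls $O_{r/20}(z_1), \dots, O_{r/20}(z_N)$; each has positive $\tau_0$-measure by faithfulness, and every $O_{r/10}(x)$ contains at least one such ball (pick $i$ with $x \in O_{r/20}(z_i)$), so $\Delta(r) \geq \min_i \mu_{\tau_0}(O_{r/20}(z_i)) > 0$. For $\lim_{r \to 0}\Delta(r) = 0$ I argue by contradiction: if $\Delta(r_n) \geq c > 0$ along some $r_n \downarrow 0$, then for every $x \in X$ one has $\mu_{\tau_0}(\{x\}) = \lim_n \mu_{\tau_0}(O_{r_n/10}(x)) \geq c$, so $X$ consists of at most $\lfloor 1/c \rfloor$ atoms, contradicting that $X$ is infinite.

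For the existence of $\mathcal{H}$ and $\delta$, I would cover $X$ by a finite family $\{O_{\eta/20}(y_j)\}_{j=1}^m$, then use Urysohn's lemma to pick $f_j \in C(X)$ forming a partition of unity subordinate to this cover with $0 \leq f_j \leq 1$, $\operatorname{supp}(f_j) \subset O_{\eta/10}(y_j)$, and $f_j \equiv 1$ on $O_{\eta/20}(y_j)$. Set $h_j = f_j \cdot P \in C_+ \setminus \{0\}$ and $\mathcal{H} = \{h_1, \dots, h_m\}$. The map $f \mapsto \tau \circ L(f \cdot P)$ is a positive linear functional on $C(X)$, and Riesz representation gives the finite measure $\mu_{\tau \circ L}$ on $X$ satisfying $\tau \circ L(f \cdot P) \leq \mu_{\tau \circ L}(\operatorname{supp}(f))$ whenever $0 \leq f \leq 1$. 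For $O_r(x)$ with $r \geq \eta$, put $J = \{j : \operatorname{supp}(h_j) \subset O_r(x)\}$. A triangle-inequality check shows that any $y_j$ with $j \notin J$ satisfies $d(x, y_j) > r - \eta/10$, so $f_j$ vanishes on $O_{r - \eta/5}(x)$; since $\sum_j f_j \equiv 1$ globally, this gives $\sum_{j \in J} f_j \equiv 1$ on $O_{r - \eta/5}(x) \supseteq O_{r/10}(x)$, so $\tau_0\bigl(\sum_{j \in J} h_j\bigr) \geq \mu_{\tau_0}(O_{r/10}(x)) \geq \Delta(r)$. Summing the hypothesis $|\tau \circ L(h_j) - \tau_0(h_j)| < \delta$ over $J$ then yields $\mu_{\tau \circ L}(O_r(x)) \geq \tau \circ L\bigl(\sum_J h_j\bigr) > \Delta(r) - m\delta$.

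The main obstacle is producing the \emph{strict} inequality $\mu_{\tau \circ L}(O_r) > \Delta(r)$ rather than the approximate bound $> \Delta(r) - m\delta$. Since $\Delta(r)$ is defined as an infimum and may be attained essentially on the nose by $\mu_{\tau_0}(O_{r/10}(x))$, one must manufacture a uniform positive slack. I plan to exploit the factor-$10$ buffer in the definition of $\Delta$: by refining the supports to a scale strictly finer than $\eta/10$, the stronger bound $\tau_0\bigl(\sum_{j \in J} h_j\bigr) \geq \mu_{\tau_0}(O_{r - \eta/5}(x))$ exceeds $\mu_{\tau_0}(O_{r/10}(x))$ by the $\tau_0$-mass of the annulus $O_{r - \eta/5}(x) \setminus O_{r/10}(x)$, whose width $\geq 7\eta/10$ and faithfulness together furnish a lower bound depending only on $\eta$; choosing $\delta$ smaller than $1/m$ times this bound then yields the strict inequality uniformly for $r \geq \eta$.
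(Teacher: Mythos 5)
Your handling of the first half is fine as far as it goes (non-decreasing, positive, limit zero at $0$), but note that Lemma \ref{L_Delta} asserts $\Delta$ is \emph{strictly} increasing and you only obtain monotonicity; this is not cosmetic, because the uniform positive slack you need at the very end is precisely a quantitative form of that strictness. The paper's proof routes around your difficulty differently: it fixes finitely many scales $r_i=i\eta/4$, chooses finite $r_i$-nets $\{x_{i,j}\}$ with bump functions equal to $1$ on $O_{r_{i-1}}(x_{i,j})$ and supported in $O_{r_i}(x_{i,j})$, and sets $\delta=\min_i\{\Delta(10r_i)-\Delta(9r_i)\}$, positive by the asserted strict monotonicity (attributed to faithfulness of $\tau_0$). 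For a given ball $O_r$ with $r\ge\eta$ it then compares $\mu_{\tau\circ L}(O_r)$ with $\Delta$ evaluated at two \emph{different} arguments ($\Delta(10r_{k-1})$ versus $\Delta$ at a radius dominating $r$), so it never needs a lower bound on the mass of an annulus around the particular center of $O_r$.

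The genuine gap in your proposal is the final step. After the (correct) estimate $\mu_{\tau\circ L}(O_r(x)) > \tau_0\bigl(\sum_{j\in J}h_j\bigr)-m\delta \ge \mu_{\tau_0}(O_{r-\eta/5}(x))-m\delta$, you propose to beat $\Delta(r)$ by the $\tau_0$-mass of the annulus $O_{r-\eta/5}(x)\setminus O_{r/10}(x)$, bounded below uniformly in $x$ and $r\ge\eta$ ``by faithfulness, since its width is at least $7\eta/10$.'' This does not work in a general compact metric space: such an annulus can be empty (isolated points, gaps in the distance set, or $O_{r/10}(x)=X$ already), and faithfulness only gives positive mass to \emph{nonempty} open sets; even when all these annuli are nonempty, a bound uniform over all centers and all $r\in[\eta,10)$ is an additional compactness claim you have not supplied. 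What your argument actually needs is a uniform positive gap between $\min_x\mu_{\tau_0}(O_{r-\eta/5}(x))$ and $\Delta(r)$ for $r\ge\eta$ --- exactly the strict-monotonicity-type ingredient you skipped in part one and the one the paper builds $\delta$ from at finitely many scales. Until that is established, the strict inequality $\mu_{\tau\circ L}(O_r)>\Delta(r)$ is not obtained. A smaller, fixable defect: the $f_j$ cannot simultaneously form a partition of unity and be identically $1$ on the balls $O_{\eta/20}(y_j)$ once two of those balls overlap; since your estimates only use $\sum_j f_j\equiv 1$ and $\operatorname{supp} f_j\subset O_{\eta/10}(y_j)$, you should simply drop the ``$\equiv 1$'' requirement.
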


\begin{proof}
That $\Delta$ is strictly increasing follows from the fact that $\tau_0$
is faithful. For any $\ep > 0$, 
let $n$ be a natural number such that $\frac{1}{n} < \ep$.
Choose $n$ distinct points $x_1, x_2, \dots, x_n$ in $X$. 
Then there is a $r > 0$ such that the sets $O_{\frac{r}{10}}(x_i)$ are disjoint. 
It follows that $\Delta(r) \leq \frac{1}{n} < \ep$. Therefore $\lim_{r \rightarrow 0} \Delta(r) = 0$.

Now let $\eta > 0$ be given.  
For each integer $i$ such that $0 < i < \frac{4}{\eta}$,
 let $ r_i = i\frac{\eta}{4}$. Choose a finite $r_i$-net $\{ x_{i, j} \vert 1 \leq j \leq k_i\}$ in $X$
(i.e., for any $x$ in $X$, there is some $x_{i, j}$ such that $d(x, x_{i, j}) < r_i$).
For each $i, j$, find a function $g_{i, j}$ which is $1$ on $O_{r_{i-1}}(x_{i, j})$,
$0$ outside $O_{r_i}(x_{i, j})$, and $0 \leq g_{i, j} \leq 1$.
Set $\mathcal{H} = \{g_{i, j} \,\vert\, 0 < i < \frac{4}{\eta}, 1 \leq j \leq k_i\}$.
Let
\[
\delta = \min \{ \Delta(10r_i) - \Delta(9r_i) \,\vert\, 0 < i < \frac{4}{\eta}\}.
\]

Now let $L \colon C \rightarrow A$ be a c.p.c. map and $\tau$ is a tracial state on $A$. Suppose that
\[
|\tau \circ L (h) - \tau_0(h)| < \delta, \quad \text{for all \,\,} h \in \mathcal{H}.
\]
Let $r \geq \eta$. Then there is some $i \geq 4$ such that $ |r - r_i| < \frac{\eta}{4}$.
Let $k = [\frac{i-1}{2}]$. For any open set $O_r$, there is some $j$ 
such that $O_r \supset O_{r_{k}}(x_{k, j})$.

We can then compute that
\begin{align*}
\mu_{\tau \circ L}(O_r) & \geq \mu_{\tau \circ L}(O_{r_{k-1}}(x_{k, j})) > \tau \circ L(g_{k, j})\\
&  \geq \tau_0(h) - \delta  > \tau_0(O_{r_{k-1}}(x_{k, j})) -\delta\\
& \geq \Delta(10r_{k-1}) -\delta \geq \Delta(9r_{k-1}) \\
& = \Delta(r_{9k-9}) > \Delta(r_{2k+4}) > \Delta(r_{i+1})> \Delta(r).\\
\end{align*}
\end{proof}

\begin{lemma} \label{L_tracehom}
Let $\Theta\in \mathcal{T}_n$. 
Let $\tau_0$ be a tracial state on $\mathcal{A}_\Theta$.
Then for any finite subset $\mathcal{H} \subset (\mathcal{A}_\Theta)_+$ and
any $\ep > 0$, there is a finite subset $\mathcal{G} \subset \mathcal{A}_\Theta$,
a $\delta > 0$ and a positive integer $N$ such that, whenever
$A$ is a unital $C^*$-algebra, $\tau$ is a tracial state on $A$,
$L \colon \mathcal{A}_\Theta \rightarrow A$ is a $\mathcal{G}$-$\delta$-multiplicative
c.p.c. map and  $v_1,v_2,\dots,v_n$ are unitaries in $A$ satisfying
\[
\|L(u_j) - v_j \| < \delta \quad \text{and}  \quad
| \tau(v_1^{l_1}v_2^{l_2}\cdots v_n^{l_n}) - \tau_0(u_1^{l_1}u_2^{l_2}\cdots u_n^{l_n})| < \delta
\]
for $j = 1, 2, \dots, n$ and $|l_j| \leq N$,  we have
\[
|\tau \circ L (a) - \tau_0(a)| < \ep, \quad \text{\,\, for all \,\,} a \in \mathcal{H}.
\]
\end{lemma}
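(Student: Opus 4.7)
The plan is to reduce the problem to trace agreement on the ``Fourier monomials'' $u_1^{l_1}u_2^{l_2}\cdots u_n^{l_n}$. The rotation relations allow one to rewrite any word in the $u_j$ and $u_j^*$ as a scalar multiple of such a monomial, so the linear span of these monomials is precisely the $*$-subalgebra generated by $u_1,\dots,u_n$, which is dense in $\mathcal{A}_\Theta$ by the universal property. For each $a\in\mathcal{H}$, I would therefore find a positive integer $N_a$ and scalars $c^{(a)}_{l_1,\dots,l_n}\in\C$ such that
\[
\bigl\|a-p_a\bigr\|<\ep/5,\qquad\text{where}\qquad p_a:=\sum_{|l_1|,\dots,|l_n|\le N_a} c^{(a)}_{l_1,\dots,l_n}\,u_1^{l_1}u_2^{l_2}\cdots u_n^{l_n}.
\]
Set $N:=\max_{a\in\mathcal{H}} N_a$ and $M:=\max_{a\in\mathcal{H}}\sum_{l_1,\dots,l_n}|c^{(a)}_{l_1,\dots,l_n}|$; these depend only on $\mathcal{H}$ and $\ep$.

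Next, I take $\mathcal{G}$ to consist of $\{u_j,u_j^*:1\le j\le n\}$ together with all partial products $u_1^{s_1}u_2^{s_2}\cdots u_k^{s_k}$ for $1\le k\le n$ and $|s_j|\le N$. A routine induction on the length of a product shows that, given any $\eta>0$, one can choose $\delta>0$ so that any $\mathcal{G}$-$\delta$-multiplicative c.p.c.\ map $L$ satisfies $\bigl\|L(u_1^{l_1}\cdots u_n^{l_n})-L(u_1)^{l_1}\cdots L(u_n)^{l_n}\bigr\|<\eta$ for all $|l_j|\le N$, and if in addition $\|L(u_j)-v_j\|<\delta$ for each $j$, then also $\bigl\|L(u_1)^{l_1}\cdots L(u_n)^{l_n}-v_1^{l_1}v_2^{l_2}\cdots v_n^{l_n}\bigr\|<\eta$. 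I shrink $\delta$ if necessary so that both of these estimates hold with $\eta=\ep/(10M)$ and also $M\delta<\ep/5$.

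With these choices, I apply a four-term triangle inequality
\begin{align*}
|\tau\circ L(a)-\tau_0(a)|&\le|\tau\circ L(a-p_a)|+\Bigl|\tau\circ L(p_a)-\sum_{l_1,\dots,l_n}c^{(a)}_{l_1,\dots,l_n}\tau(v_1^{l_1}\cdots v_n^{l_n})\Bigr|\\
&\quad+\Bigl|\sum_{l_1,\dots,l_n}c^{(a)}_{l_1,\dots,l_n}\bigl(\tau(v_1^{l_1}\cdots v_n^{l_n})-\tau_0(u_1^{l_1}\cdots u_n^{l_n})\bigr)\Bigr|+|\tau_0(p_a-a)|.
\end{align*}
The first and last terms are bounded by $\|a-p_a\|<\ep/5$; the second is bounded by $M\cdot 2\ep/(10M)=\ep/5$ using the two norm estimates above; the third is bounded by $M\delta<\ep/5$ using the hypothesis on traces of monomials. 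Summing gives $|\tau\circ L(a)-\tau_0(a)|<4\ep/5<\ep$, as desired.

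The main obstacle is essentially bookkeeping: making sure $\mathcal{G}$ is large enough that the almost-multiplicativity of $L$ propagates through the $\le nN$-fold products $u_1^{l_1}u_2^{l_2}\cdots u_n^{l_n}$ without the error exceeding $\ep/(10M)$. This is handled by the standard inductive argument once $\mathcal{G}$ contains all the relevant partial products, and the constants $N$ and $M$ depend only on $\ep$ and the finite set $\mathcal{H}$ (not on $A$, $\tau$, or $L$), as required by the statement.
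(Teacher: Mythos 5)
Your argument matches the paper's proof in substance: both approximate each element of $\mathcal{H}$ by a Fourier polynomial in the generators, expand by linearity, use almost-multiplicativity to pass from $L(u_1^{l_1}\cdots u_n^{l_n})$ to $L(u_1)^{l_1}\cdots L(u_n)^{l_n}$ and then to $v_1^{l_1}\cdots v_n^{l_n}$, and finally invoke the hypothesis on traces of monomials. Your version is slightly more explicit about the contents of $\mathcal{G}$ and the propagation of errors, but the decomposition and constants are essentially the same.
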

\begin{proof}Set $\mathcal{H}=\{a_1,a_2,\dots,a_{m}\},$ we can find $$c_j=\sum\limits_{k_1=-N_{j,1}}^{N_{j,1}}\sum\limits_{k_2=-N_{j,2}}^{N_{j,2}}\sum\limits_{k_3=-N_{j,3}}^{N_{j,3}}\cdots \sum\limits_{k_n=-N_{j,n}}^{N_{j,n}}s_{j,k_1,k_2,k_3,\cdots,k_n}u_1^{k_1} u_{2}^{k_2} u_3^{k_3}\cdots u_n^{k_n}\,\,{\rm for}\,\,j=1,2,\dots,m\,\,$$  ${\rm and\,\,\, some}\,\, N_{j,1},N_{j,2},N_{j,3}\in \mathbb{N}$, $ s_{j,k_1,k_2,k_3}\in \mathbb{C}$ such that $\|a_j-c_j\|<\frac{\ep}{8}$ for $j=1,2,\dots,m.$

Set $N=\sup\limits_{j=1,2,\dots,m}\{N_{j,1},N_{j,2},N_{j,3},\cdots, N_{j,n}\}$. Choose small enough $\delta$ such that

\begin{align*}
&\phantom{aa} |\tau\circ L(a_j)-\tau_0(a_j)|&\\
& \leq |\tau\circ L(c_j)-\tau_0(c_j)|+\frac{\ep}{4}&\\
 &\leq \sum\limits_{k_1=-N_{j,1}}^{N_{j,1}}\sum\limits_{k_2=-N_{j,2}}^{N_{j,2}}\sum\limits_{k_3=-N_{j,3}}^{N_{j,3}}\cdots \sum\limits_{k_n=-N_{j,n}}^{N_{j,n}}|s_{j,k_1,k_2,k_3,\cdots,k_n}(\tau(L(u_1^{k_1} u_{2}^{k_2} u_3^{k_3}\cdots u_n^{k_n}))&\\
 &-\tau_0(u_1^{k_1} u_{2}^{k_2} u_3^{k_3}\cdots u_n^{k_n}))|+\frac{\ep}{4}&\\
 &\leq \sum\limits_{k_1=-N_{j,1}}^{N_{j,1}}\sum\limits_{k_2=-N_{j,2}}^{N_{j,2}}\sum\limits_{k_3=-N_{j,3}}^{N_{j,3}}\cdots \sum\limits_{k_n=-N_{j,n}}^{N_{j,n}}|s_{j,k_1,k_2,k_3,\cdots,k_n}(\tau(L(u_1)^{k_1} L(u_{2})^{k_2} L(u_3)^{k_3}\cdots L(u_n)^{k_n})&\\
& -\tau_0(u_1^{k_1} u_{2}^{k_2} u_3^{k_3}\cdots u_n^{k_n}))|+\frac{2\ep}{4}&\\ 
  &\leq \sum\limits_{k_1=-N_{j,1}}^{N_{j,1}}\sum\limits_{k_2=-N_{j,2}}^{N_{j,2}}\sum\limits_{k_3=-N_{j,3}}^{N_{j,3}}\cdots \sum\limits_{k_n=-N_{j,n}}^{N_{j,n}}|s_{j,k_1,k_2,k_3,\cdots,k_n}(\tau(v_1^{k_1} v_{2}^{k_2} v_3^{k_3}\cdots v_n^{k_n}) &\\
  &-\tau_0(u_1^{k_1} u_{2}^{k_2} u_3^{k_3}\cdots u_n^{k_n}))|+\frac{3\ep}{4}&\\
&\leq \ep.&
\end{align*}

\end{proof}

\begin{lemma}\label{L_Commongap}
Let $\Theta=(\theta_{j,k})\in \mathcal{T}_n$. There exist $\delta>0$ and $\theta\in[0,1)$
such that for any unital $C^*$-algebra $A$, any unitaries $v_1, v_2, \dots, v_n$ in $A$ such that 
\[
\|v_kv_j-e^{2\pi i \theta_{j,k}}v_jv_k\|<\delta, 
\text{\,\,for\,\,} j, k = 1, 2, \dots, n,
\]
and the spectrum 
${\rm sp}(v_kv_jv_k^*v_j^*)$ has a gap at $e^{ (2\pi \theta+\pi)i }$,
for all $j, k$. In particular, $\log_{\theta}(v_kv_jv_k^*v_j^*)$ is well defined and $\log_{\theta}(v_kv_jv_k^*v_j^*)=\log_{\theta_{j,k}}(v_kv_jv_k^*v_j^*)$ for $j,k=1,2,\dots,n.$
\end{lemma}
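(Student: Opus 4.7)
The plan is to extract both the spectral gap and the agreement of logarithm branches from a single elementary norm estimate, after choosing $\theta$ to satisfy two open conditions on the finite set $\{e^{2\pi i \theta_{j,k}}\}$.

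First I would observe that right-multiplication by the unitary $v_k^* v_j^*$ is isometric, giving
\[
\|v_kv_jv_k^*v_j^* - e^{2\pi i \theta_{j,k}}\| = \|v_kv_j - e^{2\pi i \theta_{j,k}}v_jv_k\| < \delta,
\]
so the spectrum ${\rm sp}(v_kv_jv_k^*v_j^*)$ is contained in the arc of $\T$ within distance $\delta$ of the point $e^{2\pi i \theta_{j,k}}$.

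Next I will select $\theta \in [0,1)$ and $\eta > 0$ arranging two conditions: (a) $|e^{(2\pi\theta+\pi)i} - e^{2\pi i \theta_{j,k}}| \geq 2\eta$ for every $(j,k)$, so that the antipode of $e^{2\pi i\theta}$ is uniformly separated from the finitely many points $e^{2\pi i \theta_{j,k}}$; and (b) $\theta_{j,k} \in (\theta - 1/2, \theta + 1/2)$ for every $(j,k)$, with $\theta_{j,k}$ taken in $[0,1)$ under the paper's standing convention. Under this convention the values $\{\theta_{j,k}\}$ lie in $[0,1)$ and have spread strictly less than $1$, so the intersection $\bigcap_{j,k}(\theta_{j,k} - 1/2, \theta_{j,k} + 1/2)$ is a non-empty open interval; condition (b) is therefore achievable, and within it one can still avoid the finitely many points excluded by (a). Taking $\delta < \eta$, condition (a) forces ${\rm sp}(v_kv_jv_k^*v_j^*) \subset F_\theta$, so $\log_\theta(v_kv_jv_k^*v_j^*)$ is well-defined by continuous functional calculus.

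Finally, for the claimed equality, I would note that both $\log_\theta$ and $\log_{\theta_{j,k}}$ restrict to continuous branches of logarithm on the connected arc of $F_\theta \cap F_{\theta_{j,k}}$ containing the spectrum, and so differ by a constant integer multiple of $2\pi i$. At the center $e^{2\pi i \theta_{j,k}}$ the value of $\log_{\theta_{j,k}}$ is $2\pi i \theta_{j,k}$ by definition; by condition (b) this value lies in the strip $\{ri : r \in (2\pi\theta-\pi, 2\pi\theta+\pi)\}$, so $\log_\theta(e^{2\pi i \theta_{j,k}}) = 2\pi i \theta_{j,k}$ as well. The integer constant is therefore $0$ and the two branches coincide throughout the spectrum, which by functional calculus yields the operator-level equality. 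The main subtlety is arranging condition (b) simultaneously with $\theta \in [0,1)$; this reduces to placing representatives of the $\theta_{j,k}$ modulo $\Z$ inside a common interval of length less than $1$, which is automatic under the paper's convention.
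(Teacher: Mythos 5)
Your argument is correct, and its core is the same as the paper's: the identity $\|v_kv_jv_k^*v_j^*-e^{2\pi i\theta_{j,k}}\|=\|v_kv_j-e^{2\pi i\theta_{j,k}}v_jv_k\|$ puts each spectrum in a small arc around $e^{2\pi i\theta_{j,k}}$, and one chooses $\theta$ so that $e^{(2\pi\theta+\pi)i}$ keeps a definite distance from these finitely many points, which yields the common gap. Where you go beyond the paper is the ``in particular'' clause: the paper's proof stops at the gap statement, and its choice of $\theta$ (only that the antipode misses the arcs) does not by itself force $\log_\theta=\log_{\theta_{j,k}}$ on the spectrum --- for instance, with $\theta_{j,k}=0$ the choice $\theta=0.9$ satisfies the gap condition, yet $\log_{0.9}(1)=2\pi i\neq 0=\log_0(1)$, so the two functional calculi would differ by $2\pi i$. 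Your extra condition (b), that every $\theta_{j,k}$ lies in $(\theta-\tfrac{1}{2},\theta+\tfrac{1}{2})$, is exactly what is needed: it makes the two branches agree at the center $e^{2\pi i\theta_{j,k}}$, hence (being continuous logarithms that differ by a constant in $2\pi i\Z$ on the connected arc containing the spectrum and the center) agree on the whole arc, and the operator equality follows by functional calculus. Your existence check for such a $\theta\in[0,1)$ is also sound: the $\theta_{j,k}$ all lie in $[0,1)$, so their spread is less than $1$ and, say, $\theta=(\max_{j,k}\theta_{j,k}+\min_{j,k}\theta_{j,k})/2$ lies in the required interval and in $[0,1)$, after which the finitely many values violating (a) can be avoided. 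So your proposal is not only correct but supplies a step that the paper's own proof of the last claim leaves implicit.
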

\begin{proof}
Choose $\delta > 0$ small enough so that the set
\[
\cup_{j, k = 1, 2, ,\dots, n}\{e^{2\pi i t} \,\vert\, 
\theta_{j, k} -\delta \leq t \leq \theta_{j, k} + \delta\} 
\]
is not the whole circle. Choose 
$\theta \in [0, 1)$ so that $e^{(2\pi \theta+\pi )i }$ is not in the above set.
If $\|v_kv_j-e^{2\pi i \theta_{j,k}}v_jv_k\|<\delta$, then 
${\rm sp}(v_kv_jv_k^*v_j^*) \subset \{e^{2\pi i t} \,\vert\, \theta_{j, k} -\delta \leq t \leq \theta_{j, k} + \delta\}$.
Thus $e^{(2\pi \theta+\pi )i }$ is a common gap of the spectra.
\end{proof}

We are now ready to prove the main theorem of this section.

\begin{theorem}\label{T_stabilityTR1}
Let $\Theta = (\theta_{j, k})_{3 \times 3} \in \mathcal{T}_3$.
Let $u_1, u_2, u_3$ be the canonical unitaries in $\mathcal{A}_\Theta$
and $\tau_0$ be a faithful tracial state on $\mathcal{A}_\Theta$.
For any $\varepsilon>0$, there exists $0<\delta<\frac{1}{2}$ 
 and a positive integer $N > 0$ satisfying the following:
For any unital simple separable $C^*$-algebra $A$ with tracial rank at most one, any three unitaries $v_1,v_2,v_3\in A$ such that
\begin{flalign}
& {\rm(1)} \,\,
\|v_k v_j-e^{2\pi i \theta_{j,k}}v_j v_k\|<\delta,\label{ml1}&\\
& {\rm(2)} \,\,
\frac{1}{2\pi i}\tau(\log_{\theta}(v_k v_j v_k^* v_j^*))=\theta_{j,k},\, j,k = 1,2,3,\, and\,\theta\,  is\, constructed\, as\, in\, Lemma\, \ref{L_Commongap}, \label{ml2}&\\
&{\rm (3)}\,\,
|\tau(v_1^{l_1}v_2^{l_2}v_3^{l_3}) - \tau_0(u_1^{l_1}u_2^{l_2}u_3^{l_3})|<\delta,
\quad \text{for all}\,\, \tau \in T(A) \text{\,\, and \,\,}
 |l_1|, |l_2|, |l_3| \leq N,\label{ml211}&
\end{flalign}
 there exists a triple of unitaries $\tilde{v}_1,\tilde{v}_2,\tilde{v}_3\in A$ such that
$$\tilde{v}_k\tilde{v}_j=e^{2\pi i\theta_{j,k} }\tilde{v}_j\tilde{v}_k\,\,\,\,\mbox{and}\,\,\,\,\|\tilde{v}_j-v_j\|<\varepsilon,\,\,\,j,k=1,2,3.$$
\end{theorem}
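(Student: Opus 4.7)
The plan is to exhibit a unital completely positive $\mathcal{G}$-$\eta$-multiplicative map $L\colon\mathcal{A}_\Theta\to A$ that sends the canonical generators $u_1,u_2,u_3$ approximately to $v_1,v_2,v_3$, then to produce a genuine unital homomorphism $h\colon\mathcal{A}_\Theta\to A$ whose $K$-theoretic, tracial, and $U/CU$-data agree with those of $L$, so that the existence and uniqueness machinery of \cite{Lin17} (as recorded in Theorems \ref{T_exist} and \ref{T_unique}) forces $L$ and $h$ to be approximately unitarily equivalent. Setting $\tilde v_j=W^{*}h(u_j)W$ for the intertwining unitary $W$ then yields unitaries satisfying the exact rotation relation and within $\varepsilon$ of the $v_j$'s.

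First I would produce $L$ via Proposition \ref{P_AlmostHExist}, taking $\delta$ and the finite set $\mathcal{G}$ small/large enough to force $\|L(u_j)-v_j\|$ to be as small as any subsequent step requires. Lemma \ref{L_AlmostHK0} then identifies $[L]([u_j])=[v_j]$ and $[L](b_{u_k,u_j})=b^{\theta_{j,k}}_{v_k,v_j}$. Combining the Exel trace formula (Theorem \ref{trace formula}) with hypothesis (2), one obtains $\rho_A(b^{\theta_{j,k}}_{v_k,v_j})(\tau)=\theta_{j,k}$ for every $\tau\in T(A)$, which matches the values $\rho_{\mathcal{A}_\Theta}(b_{u_k,u_j})(\tau_0)$ by Proposition \ref{P_RieffelProj}; since $\tau_0$ is the unique tracial state that controls $K_0$ by Proposition \ref{P_sameK0}, $[L]$ is automatically tracially compatible on the $K_0$-generators listed in Proposition \ref{P_KofNCT}. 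Simultaneously, Lemma \ref{L_tracehom} together with hypothesis (3) shows that $\tau\circ L$ agrees with $\tau_0$ to any prescribed tolerance on any prescribed finite subset $\mathcal{H}\subset\mathcal{A}_\Theta$.

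Next, I would invoke the AH-description $\mathcal{A}_\Theta=\varinjlim C_m$ from Proposition \ref{P_RAisAH} and apply Theorem \ref{T_exist}. I take $\kappa=[L]\in KL_e(\mathcal{A}_\Theta,A)^{++}$, take $\gamma\colon T(A)\to T_f(\mathcal{A}_\Theta)$ to be the constant map sending every $\tau$ to the (unique, faithful) trace $\tau_0$, and take $\alpha\colon U_{\infty}(\mathcal{A}_\Theta)/CU_{\infty}(\mathcal{A}_\Theta)\to U(A)/CU(A)$ to be the homomorphism determined on the four $K_1$-generators of Proposition \ref{P_KofNCT} by $\bar u_j\mapsto\overline{\langle L(u_j)\rangle}$ for $j=1,2,3$, $\bar u\mapsto\overline{\langle L(u)\rangle}$, together with the induced map on the $\Aff(T(\mathcal{A}_\Theta))/\overline{\rho_{\mathcal{A}_\Theta}(K_0(\mathcal{A}_\Theta))}$ summand of the split exact sequence (\ref{E_Thomsen_es}). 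Compatibility of $(\kappa,\gamma,\alpha)$ follows from the Exel formula, Proposition \ref{P_sameK0}, and Lemma \ref{L_distsame}. Theorem \ref{T_exist} then produces a unital monomorphism $h\colon\mathcal{A}_\Theta\to A$ with $[h]=\kappa$, $h_\sharp=\gamma$, and $h^\ddagger=\alpha$.

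Finally, I pass to a sufficiently deep stage $C_m$ in the AH inductive system so that $u_1,u_2,u_3$ are approximated by elements of the image of $C_m$ in $\mathcal{A}_\Theta$, and apply Theorem \ref{T_unique} to the two $\mathcal{G}$-$\delta$-multiplicative maps $L|_{C_m}$ and $h|_{C_m}$. The four hypotheses of Theorem \ref{T_unique} are verified: the $K$-theory equality on $\mathcal{P}$ holds because $[L]=[h]=\kappa$; the trace estimate on $\mathcal{H}$ follows from Lemma \ref{L_tracehom} applied to (3), combined with $h_\sharp=\gamma$; the $U/CU$ distance bound follows from Lemma \ref{L_distsame} applied to representatives of $J_{\mathcal{A}_\Theta}(K_1(\mathcal{A}_\Theta))$, combined with $h^\ddagger=\alpha$; and the measure lower bound on open balls is supplied by Lemma \ref{L_Delta} with the faithful trace $\tau_0$. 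The conclusion yields a unitary $W\in A$ with $\|WL(u_j)W^{*}-h(u_j)\|$ arbitrarily small, and setting $\tilde v_j=W^{*}h(u_j)W$ produces unitaries obeying $\tilde v_k\tilde v_j=e^{2\pi i\theta_{j,k}}\tilde v_j\tilde v_k$ and $\|\tilde v_j-v_j\|<\varepsilon$. The principal obstacle is controlling the fourth $K_1$-generator $[u]$ of Proposition \ref{P_KofNCT}, which is not a canonical generator and whose image is pinned down only through hypothesis (3) acting on trace monomials in $v_1,v_2,v_3$ together with the implicit structure of $L$; the moment condition (3) is precisely the injectivity-type assumption that allows the compatibility of $(\kappa,\gamma,\alpha)$ across this extra generator, and it is this step that will later need to be shown automatic in the non-degenerate case.
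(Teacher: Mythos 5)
Your overall route is the same as the paper's: construct an almost multiplicative c.p.c.\ map $L$ with $L(u_j)\approx v_j$ (Proposition \ref{P_AlmostHExist}), assemble a compatible triple $(\kappa,\gamma,\alpha)$ from the data of $L$, produce a genuine monomorphism by Theorem \ref{T_exist}, and compare the two maps on a finite stage $C_m$ of the AH decomposition of Proposition \ref{P_RAisAH} via Theorem \ref{T_unique}; this is exactly the paper's strategy, and the way you feed hypotheses (2) and (3) into the $K_0$-compatibility and the trace/measure hypotheses is essentially right. Two points, however, need repair. First, you simply write $\kappa=[L]\in KL_e(\mathcal{A}_\Theta,A)^{++}$. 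Since $L$ is only almost multiplicative, $[L]$ is only partially defined, so $\kappa$ has to be defined on the free generators $[1_{\mathcal{A}_\Theta}],b_{u_2,u_1},b_{u_3,u_1},b_{u_3,u_2}$ and $[u_1],[u_2],[u_3],[u_0]$ by the values $[L]$ takes there; more seriously, membership in $KL_e(\mathcal{A}_\Theta,A)^{++}$ is a hypothesis of Theorem \ref{T_exist} and requires a genuine verification of positivity of $\kappa_0$: for a positive class $[p]=n_1[1]+n_2b_{u_2,u_1}+n_3b_{u_3,u_1}+n_4b_{u_3,u_2}$ one gets $\rho_A(\kappa_0([p]))(\tau)=n_1+n_2\theta_{1,2}+n_3\theta_{1,3}+n_4\theta_{2,3}=\tau_\Theta(p)>0$ for all $\tau\in T(A)$ from the Exel formula and hypothesis (2) (the computation you already carried out), and one must then invoke strict comparison in $A$ (available because $\mathrm{TR}(A)\leq 1$) to conclude $\kappa_0([p])\in K_0(A)_+\setminus\{0\}$. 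Without this step the existence theorem cannot be applied.

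Second, your closing diagnosis of where hypothesis (3) enters is mistaken. The fourth $K_1$-generator $[u_0]$ causes no difficulty: since $K_1(\mathcal{A}_\Theta)$ is free abelian, one simply defines $\kappa_1([u_0])=[\langle L(u_0)\rangle]$ and $\alpha(\bar u_0)=\overline{\langle L(u_0)\rangle}$, and compatibility of $(\kappa,\gamma,\alpha)$ across this generator holds by construction; no hypothesis is consumed there. Hypothesis (3), together with the faithfulness of $\tau_0$, is used exactly where your third paragraph already uses it: through Lemma \ref{L_tracehom} it makes $\tau\circ L$ close to $\tau_0$ on the prescribed finite set, which yields both the trace-closeness hypothesis of Theorem \ref{T_unique} and, via Lemma \ref{L_Delta}, the measure lower bound $\mu_{\tau\circ L}(O_r)>\Delta(r)$ (the genuine ``injectivity-type'' ingredient), and it justifies taking $\gamma$ to be the constant map to $\tau_0$, which must land in $T_f(\mathcal{A}_\Theta)$ for Theorem \ref{T_exist}. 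Note also that $\tau_0$ is merely a chosen faithful tracial state; it is unique only when $\Theta$ is nondegenerate, so the parenthetical ``unique'' should be dropped. With these corrections your argument coincides with the paper's proof.
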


\begin{proof}
Let $\varepsilon>0$ be given. Set $\mathcal{F}=\{1_{\mathcal{A}_{\Theta}},u_1,u_2,u_3\}$.
By Proposition \ref{P_RAisAH}, 
there are $C_n $ with finitely generated free abelian $K$-groups which are finite direct sum  of homogeneous $C^*$-algebras such that
$\mathcal{A}_\Theta = \overline{\bigcup_{n=1}^{\infty} C_n}$.
Choose $n$ large enough and a finite subset $\widetilde{\mathcal{F}} \subset C_n := C$
such that for any $a$ in $\mathcal{F}$,
there is a $\tilde{a} \in \widetilde{\mathcal{F}}$ such that $\| a - \tilde{a}\| < \frac{\ep}{3}$.

By Proposition \ref{P_KofNCT}, $K_1(\mathcal{A}_\Theta)$  is isomorphic to $\Z^4$ 
which is generated by $[u_1], [u_2], [u_3]$
and $[u_0]$ for some $ u_0 \in U_{\infty}(\mathcal{A}_\Theta)$. We define a splitting map:
$$
J_\Theta \colon K_1(\mathcal{A}_\Theta) \rightarrow U_{\infty} (\mathcal{A}_\Theta) / CU_{\infty}(\mathcal{A}_{\Theta}),
\quad J_\Theta([u_i]) = \overline{u}_i, \,\, 0 \leq i \leq 3.$$

For simplicity, we assume that $C=PM_n(C(X))P$. The proof for $C$ being the direct sum
of such algebras is exactly the same.
Let $\iota \colon C \rightarrow \mathcal{A}_{\Theta}$ be the inclusion map.
Since $K_1(C)$ is finitely generated free abelian group, there is a splitting map
$J_C \colon K_1(C) \rightarrow U_{\infty} (C) / CU_{\infty}(C)$ such that the following diagram 
commutes:
\begin{center}
\begin{tikzcd}
U_{\infty} (C) / CU_{\infty}(C)  \arrow[d, "\iota^{\ddagger}"] & K_1(C) \arrow[l, "J_C"'] \arrow[d, "\iota_*"] \\
U_{\infty} (\mathcal{A}_\Theta) / CU_{\infty}(\mathcal{A}_{\Theta})   & K_1(\mathcal{A}_\Theta)
\arrow[l, "J_{\mathcal{A}_\Theta}"'].
\end{tikzcd}
\end{center}
Define a function $\Delta \colon (0, 10) \rightarrow (0, 1)$ by
\[
\Delta(r) = \min \{ \mu_{\tau_0}(O_{\frac{r}{10}}(x)) \,\vert\,  x \in X\}, 
\quad 0 < r < 10.
\]

Choose $\eta> 0$, $\delta_0 > 0$ (in place of $\delta$), 
a finite subset $\widetilde{\mathcal{G}} \subset C$,
a finite subset $\widetilde{\mathcal{P}} \subset \underline{K}(C)$, 
a finite subset $\widetilde{\mathcal{H}} \subset C_{s.a.}$ and 
a finite subset $\widetilde{\mathcal{U}} \in U_c(K_1(C))$
with respect to $\Delta$, $\ep/3$ (in place of $\ep$) and $\widetilde{\mathcal{F}}$ (in place of $\mathcal{F}$) according to Theorem \ref{T_unique}.

For the matter of convenience, the trace $\tau_0$ restricted to $C$ is denoted by the same symbol.
By Lemma \ref{L_Delta}, there exists a finite subset $\widetilde{\mathcal{H}}_1 \in C_+ \backslash \{0\}$
and a $\delta_1 \leq \delta_0$ such that, for any $C^*$-algebra $A$,
whenever $L \colon C \rightarrow A$ is a c.p.c. map and $\tau$ is a tracial state on $A$, if
\[
|\tau \circ L (h) - \tau_0(h)| < \delta_1, \quad \text{for all \,\,} h \in \widetilde{\mathcal{H}}_1,
\]
then $\mu_{\tau \circ L}(O_r) > \Delta(r)$, for all $r \geq \eta$.

Let $\mathcal{H} = \iota(\widetilde{\mathcal{H}} \cup \widetilde{\mathcal{H}}_1)$. Choose a finite subset $\mathcal{G}_0\subset \mathcal{A}_{\Theta}$, a $\delta_2 > 0$ and a positive integer $N$
with respect to $\mathcal{H}$ and $\delta_1$ (in place of $\ep$) according to Lemma \ref{L_tracehom}.

Let $\overline{\mathcal{U}} = \{ \overline{\iota(w)} \,\vert\, w \in \widetilde{\mathcal{U}} \}$.
Then $\overline{\mathcal{U}}$ is a finite subset of $J_{\Theta}(K_1(\mathcal{A}_{\Theta}))$.
By Lemma \ref{L_distsame}, there is a finite subset $\mathcal{G}_1$ and a $\delta_3$ such that, 
for any unital $C^*$-algebra $A$ and any unital $\mathcal{G}_1$-$\delta_3$-multiplicative c.p.c. map
$L \colon \mathcal{A}_{\Theta} \rightarrow A$, there is a homomorphism
\[
\alpha_1 \colon J_{\Theta}(K_1(\mathcal{A}_{\Theta})) \rightarrow U_{\infty}(A)/CU_{\infty}(A)
\]
such that $\mathrm{dist}(\overline{\langle L(\iota(w)) \rangle}, \alpha_1(\overline{\iota(w)}) < \delta_0$,
for all $w \in \widetilde{\mathcal{U}}$.

Let $\mathcal{P} = [\iota](\widetilde{\mathcal{P}})$.
Since $K_*(\mathcal{A}_{\Theta})$ is torsion free, 
 by Proposition 2.4 of \cite{Schochet}, we have 
$K_*(\mathcal{A}_{\Theta}; \Z/n\Z) \cong K_*(\mathcal{A}_{\Theta}) \otimes \Z/n\Z$.
By the description of $K_*(\mathcal{A}_{\Theta})$ (See Proposition \ref{P_KofNCT}),
we may assume without loss of generality that 
\[
\mathcal{P}=\{[1_{\mathcal{A}_{\Theta}}],b_{u_2,u_1},b_{u_3,u_1},b_{u_3,u_2},[u_1],[u_2],[u_3],[u_0]\}.
\]

By Lemma \ref{L_AlmostHK0}, we can choose finite subset $\mathcal{G}_2 \subset \mathcal{A}_\Theta$,
$0 < \ep_0 < \frac{\ep}{6}$, and $\delta_4 > 0$ so that whenever
$v_1, v_2, v_3$ are unitaries in $A$ and
$L \colon \mathcal{A}_\Theta \rightarrow A$ 
is a $\mathcal{G}_2$-$\delta_4$-multiplicative 
c.p.c. map such that $\|L(u_i) - v_i\| < \ep_0$,
then
\[
[L](b_{u_k, u_j}) = b_{v_k, v_j}^{\theta_{j,k}}, \quad j , k = 1, 2, 3, j <k.
\]
\[
[L]([u_j]) = [v_j], \quad j = 1,2, 3.
\]
Finally, we choose $\theta$ and $\delta_5$ as in Lemma \ref{L_Commongap} for $\Theta$.

Set $\mathcal{G} = \iota(\widetilde{\mathcal{G}}) \cup \mathcal{G}_0 \cup \mathcal{G}_1 \cup \mathcal{G}_2$. 
Let $\delta_6= \min\{\delta_0, \delta_1, \delta_2, \delta_3, \delta_4,\delta_5\}$.
Find a positive number $\delta \leq \delta_6$ according to $\mathcal{G}$, $\delta_6 $ (in place of $\eta$) and $\ep_0 > 0$ ( in place of $\ep$) as in Proposition \ref{P_AlmostHExist}. 

Now suppose that $A$ is a unital simple $C^*$-algebra with tracial rank at most one.
Let $v_1, v_2, v_3\in A$ be unitaries such that
\begin{flalign}
& {\rm(1)} \,\,
\|v_kv_j-e^{2\pi i \theta_{j,k}}v_jv_k\|<\delta,& \\
& {\rm(2)} \,\,
\frac{1}{2\pi i}\tau(\log_{\theta}(v_kv_jv_k^*v_j^*))=\theta_{j,k}\quad j,k = 1,2,3\,\,{\rm and}& \\
&{\rm (3)}\,\,
|\tau(v_1^{l_1}v_2^{l_2}v_3^{l_3}) - \tau_0(u_1^{l_1}u_2^{l_2}u_3^{l_3})|<\delta,
\quad \text{for all}\,\, \tau \in T(A) \text{\,\, and \,\,}
 |l_1|, |l_2|, |l_3| \leq N.&             
\end{flalign}

Define $\kappa_0 \colon K_0(\mathcal{A}_{\Theta}) \rightarrow K_0(A)$ to be the homomorphism induced
by 
\[\kappa_0([1])=[1_A]\quad {\rm and}\quad
\kappa_0(b_{u_k, u_j}) = b_{v_k, v_j}^{\theta_{j, k}} = [L](b_{u_k, u_j}), \quad j, k = 1, 2, 3.
\]
We claim that this is a positive homomorphism. Indeed, let $[p] \in K_0(\mathcal{A}_{\Theta})$
be a positive element. Then $\tau_{\Theta}(p) > 0$. There are integers $n_1, n_2, n_3$ and $n_4$
such that $[p] = n_1 [1_{\mathcal{A}_{\Theta}}] + n_2b_{u_2, u_1} + n_3b_{u_3, u_1} + n_4b_{u_3, u_2}$.
So for any tracial state $\tau$ on $A$, by the generalized Exel trace formula (see Theorem \ref{trace formula}) and our assumption,
we can compute that
\begin{align*}
\kappa_0([p])[\tau] 
&= n_1 \tau(1) + n_2 \tau(b_{v_2, v_1}^{\theta_{1,2}}) + n_3 \tau(b_{v_3, v_1}^{\theta_{1,3}}) + n_4 \tau(b_{v_3, v_2}^{\theta_{2,3}})\\
&= n_1 1 + n_2 \theta_{1, 2} + n_3 \theta_{1, 3} + n_4 \theta_{2, 3}\\
&= n_1 \tau_{\Theta}(1) + n_2 \tau_{\Theta}(b_{u_2, u_1}) + n_3 \tau_{\Theta}(b_{u_3, u_1}) + n_4 \tau_{\Theta}(b_{u_3, u_1}) = \tau_{\Theta}(p) > 0.
\end{align*}
Since $A$ has strict comparison, this shows that $\kappa_0([p])$ is positive. 

Define a map
\[
\gamma: T(A) \rightarrow T_f(\mathcal{A}_{\Theta}) \quad \gamma(\tau) = \tau_0, \,\,\text{for all}\,\,\tau \in T(A).
\]
It induces an affine map $\gamma_{\sharp} \colon \mathrm{Aff}(T(\mathcal{A}_{\Theta})) \rightarrow \mathrm{Aff}(T(A))$. By Proposition \ref{P_sameK0}, for any tracial state
$\tau \in T(A)$,  we can compute that
\begin{align*}
\gamma_{\sharp}(\hat{p}) (\tau) 
&= \hat{p} (\gamma(\tau)) = \hat{p}(\tau_0) \\
&= \tau_0(p) = \tau_{\Theta}(p) \\
&= \kappa_0([p])(\tau).
\end{align*}
It follows that $\gamma_{\sharp}( \overline{\rho_{\mathcal{A}_{\Theta}}(K_0(\mathcal{A}_{\Theta}))})
\subset  \overline{\rho_A(K_0(A))}$.
Therefore there is an induced homomorphism (denoted by the same symbol)
\[
\gamma_{\sharp} \colon \mathrm{Aff}(T(\mathcal{A}_{\Theta})) / \overline{\rho_{\mathcal{A}_{\Theta}}(K_0(\mathcal{A}_{\Theta}))}
\rightarrow \mathrm{Aff}(T(A))/\overline{\rho_A(K_0(A))}.
\]

By Proposition \ref{P_AlmostHExist}, there is a unital $\mathcal{G}$-$\delta_5$-multiplicative c.p.c. map
$L \colon \mathcal{A}_{\Theta} \rightarrow A$ such that $\| L(u_j) - v_j \| < \ep_0$, for $j = 1, 2, 3$.
By Lemma \ref{L_distsame}, the homomorphism
\[
\alpha_1 \colon J_{\Theta}(K_1(\mathcal{A}_{\Theta})) \rightarrow U_{\infty}(A)/CU_{\infty}(A), \quad 
\alpha_1(\overline{u}_j) = \overline{\langle L(u_j) \rangle}, \text{\,\,for\,\,} j = 0, 1, 2, 3
\]
satisfies $\mathrm{dist}(\overline{\langle L(\iota(w_j)) \rangle}, \alpha_1(\overline{\iota(w_j)}) < \delta_0$, for
all $w_j \in \widetilde{U}$.

Since $K_1(\mathcal{A}_{\Theta})$ is free abelian, it is clear that there is a homomorphism
$\kappa_1 \colon K_1(\mathcal{A}_{\Theta}) \rightarrow K_1(A)$ such that the following diagram commutes:
\begin{center}
\begin{tikzcd}
U_{\infty} (\mathcal{A}_\Theta) / CU_{\infty}(\mathcal{A}_{\Theta})\arrow[d, "\alpha_1"]   & K_1(\mathcal{A}_\Theta) \arrow[l, "J_{\mathcal{A}_{\Theta}}"'] \arrow[d, "\kappa_1"]\\
U_{\infty} (A) / CU_{\infty}(A)   & \arrow[l, "J_{A}"']K_1(A) \\
\end{tikzcd}
\end{center}

Use the split exact sequence (\ref{E_Thomsen_es}),  we can find a homomorphism
$\alpha \colon U_{\infty}(\mathcal{A}_{\Theta})/CU_{\infty}(\mathcal{A}_{\Theta}) 
\rightarrow U_{\infty}(A)/CU_{\infty}(A)$ so that the following diagram commutes:

\begin{tikzcd}
0 \arrow[r] & \mathrm{Aff}(T(\mathcal{A}_{\Theta})) / \overline{\rho_{\Theta}(K_0(\mathcal{A}_{\Theta}))}
\arrow[r]\arrow[d, "\gamma_{\sharp}"] & U_{\infty}(\mathcal{A}_{\Theta})/CU_{\infty}(\mathcal{A}_{\Theta})
\arrow[r, "\pi_{\mathcal{A}_{\Theta}}"] \arrow[d, dashed, "\alpha"] & K_1(\mathcal{A}_{\Theta}) \arrow[l, shift left, "J_{\mathcal{A}_{\Theta}}"]\arrow[r]\arrow[d, "\kappa_1"] & 0. \\
0 \arrow[r] &\mathrm{Aff}(T(A)) / \overline{\rho_A(K_0(A))}
\arrow[r] &U_{\infty}(A)/CU_{\infty}(A) 
\arrow[r, "\pi_A"] & K_1(A) \arrow[r] & 0
\end{tikzcd}

Let $\kappa =(\kappa_0, \kappa_1)$. Then $\kappa \in KL_e(\mathcal{A}_{\Theta}, A)^{++}$.
It follows from the commutative diagram that $\kappa, \gamma$ and $\alpha$ are compatible.
Therefore by Theorem \ref{T_exist} there is a unital monomorphism $\phi \colon \mathcal{A}_{\Theta} \rightarrow A$ such that
\[
[\phi] = \kappa,\quad \phi_{\sharp} = \gamma, \quad \text{and\,\,\,\,} \phi^{\ddagger} = \alpha.
\]

We now compare the two maps $\widetilde{L} = L \circ \iota$ and $\tilde{\phi} = \phi \circ \iota$. It's easy to see from our construction that
\[
[\tilde{L}]\,\vert_{\widetilde{\mathcal{P}}} = [\tilde{\phi}]\,\vert_{\widetilde{\mathcal{P}}}
\]
and 
\[
\mathrm{dist}(\overline{\langle \widetilde{L}(w) \rangle }, \overline{\tilde{\phi}(w)})
= \mathrm{dist}(\overline{\langle L(\iota(w)) \rangle }, \overline{\phi (\iota(w))}) < \delta_0,
\quad \text{for all\,\,} w \in \widetilde{\mathcal{U}}.
\]

By Lemma \ref{L_tracehom}, we have
\[
\| \tau \circ L (h) - \tau_0(h) \| < \delta_1, \quad \text{for all}\,\, h \in \mathcal{H}.
\]
Therefore 
\[
\|\tau \circ \widetilde{L} (g) - \tau \circ \tilde{ \phi }(g) \|
= \|\tau \circ L (\iota(g)) - \tau_0(\iota(g)) \|< \delta_1 \leq \delta_0,
\]
for all $\tau \in T(A)$ and $g \in \mathcal{H}_0$.

The same computation shows that
\[
\|\tau \circ \widetilde{L} (h) - \tau_0(\iota(h)) \| < \delta_1, \quad \text{for all\,\,} h \in \widetilde{\mathcal{H}}_1
\]

and 
\[
\|\tau \circ \tilde{\phi} (h) - \tau_0(\iota(h)) \| = 0 < \delta_1, \quad \text{for all\,\,} h \in \widetilde{\mathcal{H}}_1.
\]

So by Lemma \ref{L_Delta}, we have
\[
\mu_{\tau \circ L} (O_r) > \Delta(r), \quad {\rm and}\quad \mu_{\tau \circ \tilde{\phi}}(O_r) > \Delta(r), 
\quad \text{\,\,for all\,\,} r \geq \eta. 
\]

By Theorem \ref{T_unique}, there is a unitary $W$ such that
\[
\| W^*\tilde{\phi}(a)W - \widetilde{L}(a) \| < \frac{\ep}{3}, \quad \text{\,\,for all \,\,} a \in \widetilde{\mathcal{F}}.
\]

For $j = 1, 2, 3$, find $\tilde{u}_j \in \widetilde{\mathcal{F}}$ such that $ \| \iota(\tilde{u}_j) - u_j \| < \frac{\ep}{6}$.
We can then compute that
\begin{align*}
\| \iota(W)^* \phi(u_j)\iota(W) - v_j\| &\leq \|\iota(W)^* \phi(\iota(\tilde{u}_j))\iota(W) - L(u_j)\| + \frac{\ep}{3}\\
& \leq \|\iota(W^*\tilde{\phi}(\tilde{u}_j)W) - L(\tilde{u}_j)\| + \frac{\ep}{3} + \frac{\ep}{3}\\
& \leq \| W^*\tilde{\phi}(\tilde{u}_j)W - \widetilde{L}(\tilde{u}_j)\| < \ep.
\end{align*}

Let $\tilde{v}_j = \iota(W)^*\phi(u_j)\iota(W)$. Then these are unitaries in $A$ satisfies the rotation 
relation with respect to $\Theta$ such that $\|\tilde{v}_j - v_j\| < \ep$.
\end{proof}

If $\Theta$ is non-degenerate, we can get a more satisfactory result. The next lemma shows that
the extra trace condition (\ref{ml211}) in Theorem \ref{T_stabilityTR1} 
is in fact automatic in this case.

\begin{lemma} \label{L_autotrace}
Suppose that $\Theta = (\theta_{j, k})_{3 \times 3}\in \mathcal{T}_3$ be a non-degenerate real skew-symmetric matrix. 
Let $\tau_{\Theta}$ be the unique tracial state on $\mathcal{A}_{\Theta}$.
Then for any $\ep > 0$, any positive integer $N$, there is a $\delta > 0$
such that whenever $A$ is a unital $C^*$-algebra, $v_1, v_2, v_3$ are unitaries in $A$ such that
\[
\|v_kv_j - e^{2\pi i \, \theta_{j, k}}v_jv_k\| < \delta,
\]
we have 
\[
|\tau(v_1^{l_1}v_2^{l_2}v_3^{l_3}) - \tau_{\Theta}(u_1^{l_1}u_2^{l_2}u_3^{l_3})|< \ep,
\quad \text{for all}\,\, \tau \in T(A) \text{\,\, and \,\,}
|l_1|, |l_2|, |l_3| \leq N.
\]
\end{lemma}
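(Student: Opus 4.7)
The plan rests on the fact that the canonical trace on the noncommutative torus vanishes on every nontrivial monomial in the generators, i.e.\ $\tau_\Theta(u_1^{l_1}u_2^{l_2}u_3^{l_3}) = 0$ whenever $(l_1,l_2,l_3) \neq (0,0,0)$. This can be derived from the canonical $\T^3$-gauge action $\alpha_z(u_j)=z_ju_j$: since $\tau_\Theta\circ\alpha_z$ is again a tracial state, the uniqueness part of Theorem \ref{simple and tracial} gives $\tau_\Theta\circ\alpha_z = \tau_\Theta$. Applied to a monomial this reads $z^{l}\tau_\Theta(u^l)=\tau_\Theta(u^l)$ for every $z\in\T^3$, which for $l\neq 0$ forces $\tau_\Theta(u^l)=0$. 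So the content of the lemma reduces to showing that $\tau(v_1^{l_1}v_2^{l_2}v_3^{l_3})$ is uniformly small for each of the finitely many nonzero $l$ with $|l_j|\leq N$.

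The main tool is a quantitative commutation estimate. Write $v^m := v_1^{m_1}v_2^{m_2}v_3^{m_3}$ and similarly $u^m$. The pairwise relations in $\mathcal{A}_\Theta$ imply $u^m u^l = e^{2\pi i\omega(m,l)}u^l u^m$ for the bilinear form $\omega(m,l):=\sum_{j<k}\theta_{j,k}(m_k l_j - m_j l_k)$, and a routine induction using the hypothesis $\|v_kv_j - e^{2\pi i\theta_{j,k}}v_jv_k\|<\delta$ and moving one generator past a product of the others produces an analogous estimate
\[
\|v^m v^l - e^{2\pi i\omega(m,l)} v^l v^m\| \leq C(\|m\|_\infty,\|l\|_\infty)\,\delta,
\]
with $C$ depending polynomially on the sup-norms of $m$ and $l$. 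The non-degeneracy hypothesis on $\Theta$ says precisely that for each $l\in\Z^3\setminus\{0\}$ there exists $m_l\in\Z^3$ with $e^{2\pi i\omega(m_l,l)}\neq 1$.

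Given these ingredients, for any tracial state $\tau$ on $A$ the trace property gives $\tau(v^l) = \tau(v^{m_l}v^l v^{-m_l})$, while the commutation estimate, combined with unitarity of $v^{m_l}$, yields $\|v^{m_l}v^l v^{-m_l} - e^{2\pi i\omega(m_l,l)}v^l\| \leq C(\|m_l\|_\infty,N)\delta$. Putting these together,
\[
|1-e^{2\pi i\omega(m_l,l)}|\cdot|\tau(v^l)| \leq C(\|m_l\|_\infty,N)\delta.
\]
Since there are only finitely many nonzero $l\in\Z^3$ with $|l_j|\leq N$, I fix the witnesses $m_l$ once and for all and set $\eta := \min_l|1-e^{2\pi i\omega(m_l,l)}|>0$ and $C^* := \max_l C(\|m_l\|_\infty,N)$; choosing $\delta < \eta\ep/C^*$ then forces $|\tau(v^l)|<\ep$ for every nonzero $l$ in the window, while for $l=(0,0,0)$ both sides equal $1$ and the desired inequality is trivial. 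The only real bookkeeping is the commutation estimate, which is elementary; the essential use of non-degeneracy is confined to the single step of extracting the witnesses $m_l$.
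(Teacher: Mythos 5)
Your argument is correct, but it takes a genuinely different route from the paper. The paper never touches monomials directly: it invokes Lemma \ref{LsL1} (Lemma 4.1 of \cite{LnTAM}), which says that an almost multiplicative c.p.c.\ map approximately induces traces, and since non-degeneracy forces $\mathcal{A}_\Theta$ to have a unique trace, any $\tau\circ L$ must be close to $\tau_\Theta$ on the finitely many monomials; it then produces the required $\mathcal{G}$-$\eta$-multiplicative c.p.c.\ map $L$ with $L(u_j)\approx v_j$ via Proposition \ref{P_AlmostHExist} (itself proved by an ultraproduct/Choi--Effros compactness argument), so the resulting $\delta$ is non-constructive. You instead work entirely inside $A$: you identify $\tau_\Theta(u_1^{l_1}u_2^{l_2}u_3^{l_3})=0$ for $l\neq 0$ via gauge invariance and uniqueness of the trace, prove the elementary approximate commutation estimate $\|v^mv^l-e^{2\pi i\omega(m,l)}v^lv^m\|\leq C\delta$, and use non-degeneracy concretely to pick, for each nonzero $l$ in the window, a witness $m_l$ with $e^{2\pi i\omega(m_l,l)}\neq 1$, so the trace identity $\tau(v^l)=\tau\bigl(v^{m_l}v^l(v^{m_l})^*\bigr)$ forces $|1-e^{2\pi i\omega(m_l,l)}|\,|\tau(v^l)|\leq C\delta$. (Your $\omega$ is indeed the correct phase for the ordered monomials, and non-degeneracy as defined in the paper is exactly the existence of the witnesses.) What each approach buys: the paper's argument is soft, needs no computation with monomials, and plugs into machinery already set up for the main theorem; yours is elementary and quantitative (an explicit $\delta$ depending polynomially on $N$ and the witnesses $m_l$), avoids c.p.c.\ lifts and ultraproducts altogether, and works verbatim for any $n$, at the cost of the commutation bookkeeping. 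One cosmetic point: in the conjugation step use $(v^{m_l})^*$ rather than the ordered product $v_1^{-m_1}v_2^{-m_2}v_3^{-m_3}$ (the two differ by an approximate phase), so that $\tau\bigl(v^{m_l}v^l(v^{m_l})^*\bigr)=\tau(v^l)$ holds exactly and the displayed inequality follows immediately.
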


\begin{proof}

Since $\mathcal{A}_{\Theta}$ has a unique tracial state in this case,
by Lemma 4.1 of \cite{LnTAM}, there is a finite subset $\mathcal{G}_1 \subset \mathcal{A}_{\Theta}$
and $\eta_1 > 0$ such that, whenever $A$ is a unital $C^*$-algebra 
and $L \colon \mathcal{A}_{\Theta}\rightarrow A$ is a unital $\mathcal{G}_1$-$\eta_1$-multiplicative c.p.c. map,
then
\[
|\tau \circ L(u_1^{l_1}u_2^{l_2}u_3^{l_3}) - \tau_{\Theta}(u_1^{l_1}u_2^{l_2}u_3^{l_3})|<\frac{\ep}{2}，
\quad \text{for all}\,\, \tau \in T(A) \text{\,\, and \,\,}
| l_1|, |l_2|, |l_3| \leq N.
\]

It is also easy to find a finite subset $\mathcal{G}_2 \subset \mathcal{A}_{\Theta}$, a 
$\eta_2 > 0$ and a $\ep_0 > 0$ such that, whenever
 $L \colon \mathcal{A}_{\Theta}$ is a unital $\mathcal{G}_2$-$\eta_2$-multiplicative c.p.c. map
 with $\| L(u_j) - v_j \| < \ep_0$ for $j= 1, 2, 3$,
 we have
 \[
 \|L(u_1^{l_1}u_2^{l_2}u_3^{l_3}) - v_1^{l_1}v_2^{l_2}v_3^{l_3} \| < \frac{\ep}{2}.
 \]
 
 Let $\mathcal{G} = \mathcal{G}_1 \cup \mathcal{G}_2$ and $\eta = \min \{\eta_1, \eta_2\}$.
 Find $\delta > 0$ according to $\mathcal{G}$, $\eta$ and $\ep_0$ (in place of $\ep$) 
 as in Proposition \ref{P_AlmostHExist}. 
 
 If $A$ is a unital $C^*$-algebra, $v_1, v_2, v_3$ are unitaries such that
\[
\|v_kv_j - e^{2\pi i \, \theta_{j, k}}v_jv_k\| < \delta,
\]
by Proposition \ref{P_AlmostHExist}, there is $\mathcal{G}$-$\eta$-multiplicative c.p.c. map
such that $\| L(u_j) - v_j \| < \ep_0$ for $j= 1, 2, 3$.
Then
\[
|\tau(v_1^{l_1}v_2^{l_2}v_3^{l_3}) - \tau_{\Theta}(u_1^{l_1}u_2^{l_2}u_3^{l_3})|
< |\tau(L(u_1^{l_1}u_2^{l_2}u_3^{l_3})) - \tau_{\Theta}(u_1^{l_1}u_2^{l_2}u_3^{l_3})| + \frac{\ep}{2}
<  \frac{\ep}{2} +  \frac{\ep}{2} = \ep,
\]
for all $\tau \in T(A)$ and  $|l_1|, |l_2|, |l_3| \leq N$.
\end{proof}

\begin{corollary}\label{nondegenerate stablity}
Let $\Theta = (\theta_{j, k})_{3 \times 3} \in \mathcal{T}_3$ be a non-degenerate real skew-symmetric matrix. 
Then for any $\varepsilon>0$, there exists $\delta > 0$ satisfying the following:
For any unital simple separable $C^*$-algebra $A$ with tracial rank at most one,
any three unitaries $v_1,v_2,v_3 \in A$ such that
\[
\|v_kv_j-e^{2\pi i\, \theta_{j,k}}v_jv_k\|<\delta,\,\, j, k = 1, 2, 3,
\]
 there exists a triple of unitaries $\tilde{v}_1,\tilde{v}_2,\tilde{v}_3\in A$ such that
\[
\tilde{v}_k\tilde{v}_j=e^{2\pi i \, \theta_{j,k}}\tilde{v}_j\tilde{v}_k\,\,\,\,\mbox{and}\,\,\,\,\|\tilde{v}_j-v_j\|<\varepsilon,\,\,\,j,k=1,2,3
\]
if and only if
\[
\frac{1}{2\pi i}\tau(\log_{\theta}(v_kv_jv_k^*v_j^*))=\theta_{j,k}\,\,{\rm for}\,\, j,k = 1,2,3\,\, {\rm and}\,\, {\rm all}\,\, \tau\in T(A),
\]
where $\theta$ is constructed as in Lemma \ref{L_Commongap}.
\end{corollary}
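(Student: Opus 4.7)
The plan is to deduce this corollary directly from Theorem \ref{T_stabilityTR1}, using Lemma \ref{L_autotrace} to eliminate the extra trace condition (\ref{ml211}) and Corollary \ref{C_ETF} for the necessity direction.

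For the ``only if'' direction, given the unitaries $\tilde{v}_1, \tilde{v}_2, \tilde{v}_3$ exactly satisfying the rotation relations and $\|\tilde{v}_j - v_j\| < \varepsilon$, an application of Corollary \ref{C_ETF} (applied separately to each pair $(v_j, v_k)$ with the constant $\theta_{j,k}$ playing the role of $\theta$, noting that by Lemma \ref{L_Commongap} the values $\log_\theta$ and $\log_{\theta_{j,k}}$ agree on the relevant spectra) yields $\frac{1}{2\pi i}\tau(\log_{\theta}(v_kv_jv_k^*v_j^*)) = \theta_{j,k}$ for all $\tau \in T(A)$, provided $\varepsilon$ is small enough.

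For the ``if'' direction, given $\varepsilon > 0$, first fix the unique tracial state $\tau_0 = \tau_\Theta$ on $\mathcal{A}_\Theta$; this is faithful by simplicity (Theorem \ref{simple and tracial}). Apply Theorem \ref{T_stabilityTR1} to obtain constants $\delta_1 > 0$ and a positive integer $N$ with the property that whenever a unital simple separable $C^*$-algebra $A$ with $TR(A) \leq 1$ admits unitaries $v_1,v_2,v_3$ satisfying conditions (\ref{ml1}), (\ref{ml2}) and (\ref{ml211}) with $\delta_1$ and $N$, the desired approximation exists. Then apply Lemma \ref{L_autotrace} with $\ep = \delta_1$ and this same $N$ to obtain $\delta_2 > 0$ such that the almost-commuting condition alone forces the tracial word-estimates in (\ref{ml211}). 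Setting $\delta = \min\{\delta_1, \delta_2\}$ will then suffice: hypothesis (1) and (2) of the corollary directly give (\ref{ml1}) and (\ref{ml2}) of the theorem, and Lemma \ref{L_autotrace} supplies (\ref{ml211}) automatically since $\Theta$ is non-degenerate and $\mathcal{A}_\Theta$ has a unique tracial state.

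There is essentially no obstacle; the work was done in Theorem \ref{T_stabilityTR1} and Lemma \ref{L_autotrace}. The only minor care needed is ensuring that the logarithm branch $\log_\theta$ in the hypothesis (chosen via Lemma \ref{L_Commongap}) is compatible with the one used in Theorem \ref{T_stabilityTR1}, but since Lemma \ref{L_Commongap} guarantees $\log_\theta(v_kv_jv_k^*v_j^*) = \log_{\theta_{j,k}}(v_kv_jv_k^*v_j^*)$, this causes no issue. One should also shrink $\delta$ at the very beginning so that Lemma \ref{L_Commongap} applies, guaranteeing the common gap exists and the logarithms make sense.
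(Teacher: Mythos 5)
Your proposal is correct and matches the paper's proof exactly: the ``if'' direction combines Theorem \ref{T_stabilityTR1} with Lemma \ref{L_autotrace} to discharge the extra trace condition, and the ``only if'' direction is Corollary \ref{C_ETF}. You have simply spelled out the choice of constants more explicitly than the paper does.
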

\begin{proof}
The `if' part follows from Theorem \ref{T_stabilityTR1} and Lemma \ref{L_autotrace}.
The `only if' part follows from Corollary \ref{C_ETF}.
\end{proof}

From Lemma \ref{3-dimensional rank}, we can get the following corollary.
\begin{corollary}
Let $\Theta = (\theta_{j, k})_{3 \times 3} \in \mathcal{T}_3$. Suppose $\dim_{\mathbb{Q}}(\rm{span}_{\mathbb{Q}}(1,\theta_{1,2}
\theta_{1,3},\theta_{2,3}))\geq 3.$
Then for any $\varepsilon>0$, there exists $\delta > 0$  satisfying the following:
For any unital simple separable $C^*$-algebra $A$ with tracial rank at most one,
any three unitaries $v_1,v_2,v_3 \in A$ such that
\[
\|v_kv_j-e^{2\pi i\, \theta_{j,k}}v_jv_k\|<\delta,\,\, j, k = 1, 2, 3,
\]
 there exists a triple of unitaries $\tilde{v}_1,\tilde{v}_2,\tilde{v}_3\in A$ such that
\[
\tilde{v}_k\tilde{v}_j=e^{2\pi i \, \theta_{j,k}}\tilde{v}_j\tilde{v}_k\,\,\,\,\mbox{and}\,\,\,\,\|\tilde{v}_j-v_j\|<\varepsilon,\,\,\,j,k=1,2,3
\]
if and only if
\[
\frac{1}{2\pi i}\tau(\log_{\theta}(v_kv_jv_k^*v_j^*))=\theta_{j,k}\,\,{\rm for}\,\, j,k = 1,2,3\,\, {\rm and}\,\, {\rm all}\,\, \tau\in T(A),  
\]
where $\theta$ is constructed as in Lemma \ref{L_Commongap}.

\end{corollary}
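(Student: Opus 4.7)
The plan is simply to reduce this corollary directly to the previous one (Corollary \ref{nondegenerate stablity}) via the characterization of non-degeneracy given by Lemma \ref{3-dimensional rank}.

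First, I would invoke Lemma \ref{3-dimensional rank}, which asserts that for a $3\times 3$ real skew-symmetric matrix $\Theta$, non-degeneracy is equivalent to the condition
\[
\dim_{\mathbb{Q}}\bigl(\mathrm{span}_{\mathbb{Q}}(1,\theta_{1,2},\theta_{1,3},\theta_{2,3})\bigr)\geq 3.
\]
Thus the hypothesis of the present corollary is exactly that $\Theta$ is non-degenerate. Moreover, the branch of logarithm $\log_\theta$ appearing in the trace condition is the one produced by Lemma \ref{L_Commongap}, which is the same $\log_\theta$ used in Corollary \ref{nondegenerate stablity}.

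Next, I would apply Corollary \ref{nondegenerate stablity} verbatim. The `if' direction (stability from the trace condition) is exactly the content of Corollary \ref{nondegenerate stablity}, and the `only if' direction (necessity of the trace condition) follows from Corollary \ref{C_ETF} via the same $\delta$ chosen to ensure $\|uv - e^{2\pi i\theta_{j,k}}vu\| < 2$ for the relevant pairs.

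There is no real obstacle here; the corollary is a cosmetic rephrasing of Corollary \ref{nondegenerate stablity} that replaces the abstract algebraic notion of non-degeneracy by its explicit numerical criterion, which is often easier to check in practice. The only thing to be careful about is to confirm that both the quantitative constants $\delta$ and the branch $\log_\theta$ are inherited without change, which is immediate from the construction.
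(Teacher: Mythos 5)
Your proposal is correct and coincides with the paper's own argument: the paper deduces this corollary directly from Corollary \ref{nondegenerate stablity} by invoking Lemma \ref{3-dimensional rank}, which identifies the condition $\dim_{\mathbb{Q}}(\mathrm{span}_{\mathbb{Q}}(1,\theta_{1,2},\theta_{1,3},\theta_{2,3}))\geq 3$ with non-degeneracy of $\Theta$. Nothing further is needed.
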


\begin{remark} It is natural to ask what happens if there are $n$ unitary elements $v_1,v_2,\cdots,v_n$ in $A$ for $n\geq 4$ and
$\|v_k v_j-e^{2\pi i \theta_{j,k}}v_j v_k\|<\delta, j,k=1,2,\dots,n$ as in Theorem \ref{T_stabilityTR1} and Corollary \ref{nondegenerate stablity}. 
For simplicity, let us consider the case $n=4,$ let $\Theta=(\theta_{j,k})_{4\times 4},$ we can compute $K_0$-group of higher dimensional noncommutative torus $\mathcal{A}_{\Theta},$ let $u_1,u_2,u_3,u_4$ be the generators of $\mathcal{A}_{\Theta},$ it is well known that
\begin{eqnarray*}
&K_0(\mathcal{A}_{\Theta})\phantom{aaaaaaaaaaaaaaaaaaaaaaaaaaaaaaaaaaaaaaaaaaaaaaaaaaaaaaaaaaaaaaaaa}\\
\cong&\mathbb{Z}^8 \phantom{aaaaaaaaaaaaaaaaaaaaaaaaaaaaaaaaaaaaaaaaaaaaaaaaaaaaaaaaaaaaaaaaaaaaa}\\
\cong &\mathbb{Z}[1]+\mathbb{Z}[b_{u_1,u_2}]+\mathbb{Z}[b_{u_1,u_3}]+\mathbb{Z}[b_{u_1,u_4}]+\mathbb{Z}[b_{u_2,u_3}]+
\mathbb{Z}[b_{u_2,u_4}]+\mathbb{Z}[b_{u_3,u_4}]+\mathbb{Z}[b_{u_1,u_2,u_3,u_4}],
\end{eqnarray*}
where $b_{u_i,u_j}$ is as defined in Definition \ref{D_RieffelProj} and  the high dimensional Bott element $b_{u_1,u_2,u_3,u_4}$ appears (One can refer to  \cite{Chakraborty} for higher dimensional Bott element $b_{u_1,u_2,u_3,u_4}$). $b_{u_1,u_2,u_3,u_4}$ is a new obstruction to stability. So for our proof,  the conditions of our Theorem \ref{T_stabilityTR1} are not sufficient to get the conclusion in the case $n\geq 4.$
\end{remark}

\section{Stability of rotation relation in purely infinite simple $C^*$-algebras}

\begin{definition}
(Definition 2.2 of \cite{LinTAM04}) Let $A$ be a $C^*$-algebra. 
Let $\mathbf{D}$ be a class of $C^*$-algebras. 
We say $A$ is weakly stable with respect to $\mathbf{D}$
if and only if for any $\ep > 0$, any finite subset $\mathcal{F} \subset A$,
there exists a $\delta > 0$ and a finite subset $\mathcal{G} \subset A$
satisfying the following:
for any $B \in \mathbf{D}$,
any $\mathcal{G}$-$\delta$-multiplicative c.p.c. map $L \colon A \rightarrow B$,
there is a homomorphism $\phi \colon A \rightarrow B$ such that
\[
\| \phi(a) - L(a) \| < \ep, \quad \text{for all}\,\, a \in \mathcal{F}.
\] 
\end{definition}

\begin{lemma} \label{L_wstable_matrix}
Let $A$ be a unital $C^*$-algebra and $\mathbf{D}$ be a class of $C^*$-algebras.
Suppose that $A$ is weakly stable with respect to $\mathbf{D}$,
then $M_n(A)$ is weakly stable with respect to $\mathbf{D}$ for $n=1,2,\cdots.$
\end{lemma}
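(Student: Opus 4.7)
The plan is to reduce weak stability of $M_n(A)$ to weak stability of $A$ via the standard matrix-unit decomposition. First I would observe that, up to approximation, it suffices to handle finite subsets $\mathcal{F} \subset M_n(A)$ whose elements are simple tensors of the form $a \otimes e_{ij}$ with $a$ running over some finite subset $\mathcal{F}_0 \subset A$. Fixing such an $\mathcal{F}$ and an $\ep > 0$, I would invoke weak stability of $A$ with respect to $\mathbf{D}$ for the input $\mathcal{F}_0$ and a small tolerance $\ep'$ to obtain some $\delta_0 > 0$ and finite subset $\mathcal{G}_0 \subset A$. Then I would take $\mathcal{G} \subset M_n(A)$ to contain the canonical matrix units $\{1_A \otimes e_{ij}\}$ together with all $a \otimes e_{ij}$ for $a \in \mathcal{G}_0$, and choose $\delta > 0$ much smaller than $\delta_0$ and $\ep'$.

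Given an arbitrary $\mathcal{G}$-$\delta$-multiplicative c.p.c.\ map $L \colon M_n(A) \to B$ with $B \in \mathbf{D}$, the images $\{L(1_A \otimes e_{ij})\}$ form a system of almost matrix units in $B$. The classical stability of the matrix-unit relations (as in Glimm, or Lemma 2.1 of \cite{LinTAM04}) then yields an honest system of matrix units $\{f_{ij}\}_{i,j=1}^n \subset B$ with $\sum_i f_{ii} = 1_B$ and $\|f_{ij} - L(1_A \otimes e_{ij})\|$ arbitrarily small. Define $L_0 \colon A \to f_{11}Bf_{11}$ by $L_0(a) = f_{11}L(a \otimes e_{11})f_{11}$. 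Because $L$ is almost multiplicative and $f_{11}$ almost commutes with $L(A \otimes e_{11})$, the map $L_0$ is unital, c.p.c., and $\mathcal{G}_0$-$\delta_0$-multiplicative.

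Now $f_{11}Bf_{11} \in \mathbf{D}$ — this is where one needs $\mathbf{D}$ to be closed under passing to full corners by projections, which holds for the classes relevant to this paper (unital purely infinite simple $C^*$-algebras, for instance, have this property). Applying weak stability of $A$, I get a $*$-homomorphism $\phi_0 \colon A \to f_{11}Bf_{11}$ with $\|\phi_0(a) - L_0(a)\| < \ep'$ for all $a \in \mathcal{F}_0$. I would then define
\[
\phi \colon M_n(A) \to B, \quad \phi(a \otimes e_{ij}) = f_{i1}\phi_0(a)f_{1j},
\]
extending linearly. Since $\phi_0$ has range in $f_{11}Bf_{11}$ and the $f_{ij}$ are exact matrix units, one checks that $\phi$ is a genuine $*$-homomorphism. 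A straightforward triangle-inequality computation using
\[
L(a \otimes e_{ij}) \approx L(1_A \otimes e_{i1})L(a \otimes e_{11})L(1_A \otimes e_{1j}) \approx f_{i1}L(a \otimes e_{11})f_{1j} \approx f_{i1}\phi_0(a)f_{1j} = \phi(a \otimes e_{ij})
\]
shows $\|\phi(x) - L(x)\| < \ep$ for all $x \in \mathcal{F}$ once $\delta$ and $\ep'$ were chosen sufficiently small.

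The main obstacle is the bookkeeping in step three, namely confirming that the target class $\mathbf{D}$ really is closed under taking unital corners; none of the rest of the argument is more than a routine chase through error estimates, whereas the corner-closure hypothesis is the only place where the ambient class $\mathbf{D}$ enters nontrivially. In the setting where Lemma~\ref{L_wstable_matrix} is used later, $\mathbf{D}$ will be a class for which this is automatic, so the proof goes through without incident.
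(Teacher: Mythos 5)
There is a genuine gap: your argument proves a weaker statement than Lemma \ref{L_wstable_matrix}. The lemma assumes nothing about $\mathbf{D}$ beyond its being a class of $C^*$-algebras, but the pivotal step of your proof applies weak stability of $A$ to the compressed map $L_0 \colon A \to f_{11}Bf_{11}$, which is only legitimate if $f_{11}Bf_{11} \in \mathbf{D}$. You acknowledge this and import a corner-closure hypothesis, but that hypothesis is not part of the statement (it appears only in the \emph{next} result, Proposition \ref{P_wsHA}); for a general class $\mathbf{D}$ the step simply fails, since weak stability with respect to $\mathbf{D}$ says nothing about maps into $f_{11}Bf_{11}$. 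The compression is also not an incidental convenience in your argument: if instead you kept $\phi_0$ as a homomorphism into $B$ and built the matrix units $\{f_{i,j}\}$ independently, the formula $\phi(a \otimes e_{i,j}) = f_{i,1}\phi_0(a)f_{1,j}$ would not be multiplicative, because one needs $\phi_0(1_A) \leq f_{1,1}$.

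The missing idea, which is how the paper's proof avoids any hypothesis on $\mathbf{D}$, is to apply weak stability of $A$ directly to the map $a \mapsto L(e_{1,1} \otimes a)$ viewed as a c.p.c.\ map into $B$ itself (the resulting homomorphism $\phi_1 \colon A \to B$ is allowed to be non-unital; the definition of weak stability never requires unitality), and then to use semiprojectivity of $M_n$ with the extra freedom of \emph{prescribing} the $(1,1)$ matrix unit: one produces exact matrix units $\{w_{i,j}\}$ in $B$ close to $\{L(e_{i,j})\}$ with $w_{1,1} = \phi_1(1_A)$. With that normalization, $\phi(e_{i,j} \otimes a) = w_{i,1}\phi_1(a)w_{1,j}$ is automatically a homomorphism into $B$, and the error estimate goes through as in your last display. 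Two further small overclaims in your write-up would also need repair even in the corner version: since $L$ is merely c.p.c.\ (not unital), you cannot arrange $\sum_i f_{i,i} = 1_B$, and $L_0$ is only approximately unital rather than unital; neither is harmful, but as written they are not correct.
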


\begin{proof}
Let $\ep > 0$ and finite subset $\mathcal{F} \subset M_n(A) \cong M_n \otimes A$ be given.
Let $e_{i, j}$, $1 \leq i, j \leq n$ be the matrix units of $M_n$.
Without loss of generality, we may assume that there is a finite subset
$\mathcal{F}_0$ in the unit ball of $A$ such that
\[
\mathcal{F} = \{e_{i, j} \otimes a \,\vert\, 
a \in \mathcal{F}_0,\,\, 1 \leq i, j \leq n \}.
\]

Let $\ep_0 = \frac{\ep}{9}$. 
Let $\mathcal{G}_0 = \{e_{i, j} \otimes 1_A\}_{1 \leq i, j \leq n}$. 
Since $M_n$ is semiprojective, 
there is a $\delta_0 > 0$ such that, 
for any $C^*$-algebra $B$,
any $\mathcal{G}_0$-$\delta_0$-multiplicative c.p.c. map 
$L \colon M_n(A) \rightarrow B$ and 
any projection $p\in B$ with $\|p - L(e_{1,1})\| < \delta_0$,
there are matrix units $\{w_{i, j}\}$ in $B$
such that $\| w_{i, j} - L(e_{i, j})\| < \ep_0$ and $w_{1,1} = p$.

Since $A$ is weakly stable with respect to $\mathbf{D}$,
we can choose $\delta_1 > 0$ and a finite subset $\mathcal{G}_1 \in A$
satisfying the following:
for any $B \in \mathbf{D}$,
any $\mathcal{G}_1$-$\delta_1$-multiplicative c.p.c. map $L \colon A \rightarrow B$,
there is a homomorphism $\phi \colon A \rightarrow B$ such that
\[
\| \phi(a) - L(a) \| < \ep_0, \quad \text{for all}\,\, a \in \mathcal{F}_0.
\]

Now let $\delta = \min \{\delta_0, \delta_1, \frac{\ep}{3}\}$, and 
\[
\mathcal{G} = \mathcal{F} \cup \mathcal{G}_0 \cup 
\{ e_{i, j} \otimes a \,\vert\, a \in \mathcal{G}_1, 1 \leq i, j \leq n \}.
\]
Suppose that $B \in \mathbf{D}$ 
and $L \colon M_n(A) \rightarrow B$ is a $\mathcal{G}$-$\delta$-multiplicative c.p.c. map. 
Let $L_1 \colon A \rightarrow B$ be defined by 
\[
L_1(a) =L(e_{1, 1} \otimes a), \quad \text{for all}\,\, a \in A.
\]
It is clear that $L_1$ is a $\mathcal{G}_1$-$\delta_1$-multiplicative c.p.c. map.
Therefore there is a homomorphism
$\phi_1 \colon A \rightarrow B$ such that
\[
\| \phi_1(a) - L_1(a) \| < \ep_0, \quad \text{for all}\, a \in \mathcal{F}_0.
\]
Since $\delta < \delta_0$, $\mathcal{G} \supset \mathcal{G}_0$, 
there are matrix units  $\{w_{i, j}\}$ in $B$
such that $\| w_{i, j} - L(e_{i, j})\| < \ep_0$ and $w_{1,1} = \phi_1(1_A)$.
Now define a homomorphism 
\[
\phi \colon M_n(A) \rightarrow B, \quad \phi(e_{i, j} \otimes a) = w_{i, 1}\phi_1(a)w_{1, j},\quad \text{for all\,\,} a \in A {\rm \,\, and\,\,}
1\leq i, j \leq n.
\]

We can compute that, for any $b = e_{i, j} \otimes a$ in $\mathcal{F}$,
\begin{align*}
\| \phi(b) - L(b) \| 
& < \| w_{i, 1}\phi_1(a)w_{1, j} - L(e_{i, 1} \otimes 1_A)L(e_{1, 1} \otimes a)L(e_{1, j} \otimes 1_A) \| + 2 \delta \\
& \leq \| w_{i, 1} - L(e_{i, 1} \otimes 1_A)\| + \|\phi_1(a) - L_1(a)\|
+ \| w_{1, j} - L(e_{1, j} \otimes 1_A) \|+ 2 \delta\\
&< 3\ep_0 + 2 \delta \leq \ep.
\end{align*}
\end{proof}

\begin{proposition}\label{P_wsHA}
Let $C = PM_n(C(X))P$ be a homogeneous algebra. Let $\mathbf{D}$ be a class of unital $C^*$-algebras
which is closed under taking matrix algebras and unital hereditary subalgebras.
Suppose that $C(X)$ is weakly stable with respect to $\mathbf{D}$. Then $C$ is weakly stable with respect to $\mathbf{D}$.
\end{proposition}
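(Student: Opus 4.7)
The plan is to bootstrap weak stability of $C$ from that of $M_n(C(X))$, which already holds by Lemma \ref{L_wstable_matrix} applied to $C(X)$. First, I will reduce to the case that the projection $P$ is full in $M_n(C(X))$: the rank of $P$ is locally constant on $X$, so $X_P = \{x \in X : P(x) \neq 0\}$ is clopen, and since $C = P M_n(C(X_P)) P$, we may replace $X$ by $X_P$ and assume $P$ is full. Under this assumption, standard Morita theory (Brown's theorem) yields an integer $k$ and elements $w_1,\dots,w_k \in P M_n(C(X))$ with $\sum_{i=1}^{k} w_i^* w_i = 1_{M_n(C(X))}$. Note that $w_i a w_j^* = P w_i a w_j^* P \in C$ for every $a \in M_n(C(X))$.

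Given $\varepsilon>0$ and a finite subset $\mathcal{F} \subset C$, I proceed in three steps. First, by replacing $B$ by the unital hereditary subalgebra $p B p \in \mathbf{D}$ for $p$ a projection close to $L(1_C)$, I may assume the c.p.c. map $L \colon C \to B$ is (approximately) unital. Next, define the ``inflation''
\[
\widetilde{L} \colon M_n(C(X)) \to M_k(B), \qquad \widetilde{L}(a)_{ij} = L(w_i\, a\, w_j^*).
\]
Using $\sum_l w_l^* w_l = 1$ and approximate multiplicativity of $L$, a direct computation
\[
\widetilde{L}(ab)_{ij} = L\Bigl(\sum_l (w_i a w_l^*)(w_l b w_j^*)\Bigr) \approx \sum_l L(w_i a w_l^*) L(w_l b w_j^*) = (\widetilde{L}(a)\widetilde{L}(b))_{ij}
\]
shows $\widetilde{L}$ is itself approximately multiplicative on any prescribed finite subset, and that $\widetilde{L}(1_{M_n(C(X))})$ is close to an exact projection $E \in M_k(B)$. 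Since $\mathbf{D}$ is closed under matrix algebras and unital hereditary subalgebras, $E M_k(B) E$ lies in $\mathbf{D}$, so weak stability of $M_n(C(X))$ furnishes a unital homomorphism $\widetilde{\phi} \colon M_n(C(X)) \to E M_k(B) E$ close to $\widetilde{L}$ on any chosen finite subset (which I arrange to contain all $w_i a w_j^*$ for $a \in \mathcal{F}$, all $w_i w_j^*$, and all $w_i P w_j^*$).

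Finally, I descend to a homomorphism $\phi \colon C \to B$. Consider the column vector $s = (L(w_1 P), \dots, L(w_k P))^{T} \in M_{k \times 1}(B)$; using $w_i = P w_i$ and $\sum_i w_i^* w_i = 1$, one checks $s^* s \approx L(P) = 1_B$ and $s s^* \approx [L(w_i P w_j^*)]_{ij} \approx \widetilde{\phi}(P)$. Polar-decomposing $s$ and then correcting the range projection by a nearby unitary (using that norm-close projections in $M_k(B)$ are unitarily equivalent via a unitary close to $1$), I obtain an honest partial isometry $\tilde{s} \in M_{k \times 1}(B)$, close to $s$, with $\tilde{s}^* \tilde{s} = 1_B$ and $\tilde{s}\tilde{s}^* = \widetilde{\phi}(P)$. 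Then
\[
\phi(a) := \tilde{s}^* \widetilde{\phi}(a)\, \tilde{s}
\]
is a unital homomorphism $C \to B$, since for $a, b \in C$ one has $\widetilde{\phi}(a)\tilde{s}\tilde{s}^*\widetilde{\phi}(b) = \widetilde{\phi}(a)\widetilde{\phi}(P)\widetilde{\phi}(b) = \widetilde{\phi}(ab)$. For $a \in \mathcal{F}$, unwinding $\tilde{s} \approx s$ together with $\widetilde{\phi} \approx \widetilde{L}$ gives
\[
\phi(a) \approx s^* \widetilde{L}(a) s = \sum_{i,j} L(P w_i^*) L(w_i a w_j^*) L(w_j P) \approx L(P a P) = L(a),
\]
so $\|\phi(a) - L(a)\| < \varepsilon$ provided $\delta$ and $\mathcal{G}$ were chosen small and large enough.

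The main technical obstacle I anticipate is the quantitative bookkeeping: $\mathcal{G} \subset C$ must be taken large enough and $\delta$ small enough that the inflation $\widetilde{L}$ is multiplicative to the precision demanded by the weak stability hypothesis for $M_n(C(X))$ applied to the finite subset $\{w_i a w_j^*, w_i P w_j^*, w_i w_j^*\}$ and to a suitable tolerance that leaves room for the final compression by $\tilde{s}$ to produce the prescribed $\varepsilon$-closeness. The perturbation step that replaces the approximate partial isometry $s$ by an honest $\tilde{s}$ with a prescribed range projection is delicate but standard.
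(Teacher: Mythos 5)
Your proof is correct and takes a genuinely different route from the paper's, though both ultimately reduce to weak stability of a matrix algebra over $C(X)$ (via Lemma \ref{L_wstable_matrix}). The paper realizes $C(X)$ as the corner $eM_m(C)e$ of a matrix amplification of $C$ (for a rank-one projection $e$), and then produces a unitary $W \in M_{lm}(C)$ conjugating $1_C \otimes e_{1,1}$ into a projection $Q$ sitting inside $M_l(eM_m(C)e) \cong M_l(C(X))$; given an almost multiplicative $L\colon C\to B$, it amplifies to $L\otimes\mathrm{id}_{lm}$, extracts a unitary $V$ close to $L_1(W)$, applies weak stability of $M_l(C(X))$ to obtain $\phi$, and transports back by $a \mapsto UV\phi(W^*(a\otimes e_{1,1})W)V^*U^*$ after a small unitary correction $U$. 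You instead pick Morita-implementing elements $w_1,\dots,w_k\in PM_n(C(X))$ with $\sum w_i^*w_i = 1$, inflate $L$ to the c.p.c.\ map $\widetilde{L}\colon M_n(C(X))\to M_k(B)$, apply weak stability of $M_n(C(X))$, and descend via a partial isometry $\tilde s$ built from $(L(w_iP))_i$. A point in your favor is that the existence of the $w_i$ follows from fullness of $P$ alone (one takes $w_j = Pb_jd$ with $d=(\sum b_j^*Pb_j)^{-1/2}$ from any algebraic expression $1=\sum a_jPb_j$), whereas the paper's unitary $W$ requires $[Q]=[P\otimes e_{1,1}]$ together with cancellation of the complements in $M_{lm}(C)$, which tacitly leans on $X$ having finite covering dimension; your reduction to $X_P$ (making $P$ full) is also carried out explicitly, while the paper leaves it implicit. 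Two small things you should verify in a final write-up: that $C(X_P)$ inherits weak stability from $C(X)$ (it does, since $C(X_P)$ is a unital direct summand and the stable image of $1_{X\setminus X_P}$ under a nearby homomorphism is a projection close to $0$, hence $0$), and the routine bookkeeping for the partial-isometry correction of $s$ (polar decomposition followed by conjugating by a unitary close to $1$ matching the range projection to $\widetilde\phi(P)$), both of which you flag yourself.
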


\begin{proof}
We first claim that we only need to prove the unital case, i.e., an almost unital almost multiplicative map
is close to a unital homomorphism. Indeed, if the unital case is true, and suppose
$L \colon C \rightarrow B$ is a $\mathcal{G}$-$\delta$-multiplicative c.p.c. map for some $B \in \mathbf{D}$.
Enlarge $\mathcal{G}$ if necessary, we may assume that $1_C \in \mathcal{G}$. Therefore 
$L(1_C)$ is almost a projection $p \in B$. Then the map $\tilde{L} \colon C \rightarrow pBp$
defined by $\tilde{L} (a) = pL(a)p$, for $a \in C$ is almost unital and almost multiplicative.
Therefore it is close to a homomorphism, so does $L$.

We now prove the unital case. 
We note that there is some positive integer $m$ and a projection $e \in M_m(C)$ such that $eM_m(C)e \cong C(X)$.
Also there is another positive integer $l$, a projection $Q \in M_l(eM_m(C)e) \subset M_{lm}(C)$
and a unitary $W \in M_{lm}(C)$ such that $WQW^*=P \otimes e_{1, 1} \in M_{lm}(C)$.

Let $\ep > 0$ and finite subset $\mathcal{F} \subset C$ be given. 
Without loss of generality, we may assume that $P \in \mathcal{F}$ and every element in
$\mathcal{F}$ has norm less or equal to 1.

Let $\ep_0 = \frac{\ep}{2}$. There is a $\delta_0 < \ep_0$ such that, whenever $p, q$ are two projections in
a unital $C^*$-algebra $A$ such that $\| p - q\| < \delta_0$, then there is a unitary $u \in A$
such that $\| u - 1\| < \ep_0$ and $upu^* = q$.

Let $\mathcal{F}_0 = W^*(\mathcal{F} \otimes e_{1, 1}) W$. Then $\mathcal{F}_0 \in M_l(eM_m(C)e)$.
Since $M_l(eM_m(C)e) \cong M_l(C(X))$, it is weakly stable with respect to $\mathbf{D}$ by Lemma \ref{L_wstable_matrix}. Therefore there is a finite subset $\mathcal{G}_1 \subset M_l(eM_m(C)e)$ and some $\delta_1$ with $0 < \delta_1 < \frac{\delta_0}{5}$ such that,
whenever $B$ is a $C^*$-algebra in $\mathbf{D}$ and $L \colon M_l(eM_m(C)e) \rightarrow B$ is
a $\mathcal{G}_1$-$\delta_1$-multiplicative c.p.c. map, there is a homomorphism 
$\phi \colon A \rightarrow B$ such that 
\[
\| \phi(a) - L(a) \| < \frac{\delta_0}{5}, \text{\,\,for all\,\,}a \in \mathcal{F}_0.
\]

Let $\mathcal{G}_2 = \{W, W^*\}$. There is a $\delta_2$ such that, for any unital $C^*$-algebra $B$
and any $L \colon M_{lm}(C) \rightarrow B$, if $L$ is a $\mathcal{G}_2$-$\delta_2$-multiplicative
map such that $\| L(1_{M_{lm}(C)}) -1_B \| < \delta_2$, then there is a unitary $V \in B$ such that
$\| L(W) - V \| < \delta_1$. 

Let 
\[
\mathcal{G}_3 = \mathcal{G}_1 \cup \mathcal{G}_2 \cup \{W^*(a \otimes e_{1, 1})W\,\vert\, a \in \mathcal{F}\}\,\, {\rm and}\,\, \delta_3 = \min\{\delta_1, \delta_2\}.
\]
It is not difficult to see that, there exists a finite subset $\mathcal{G}$ in $C$ and
$\delta$ with $0 < \delta < \delta_3$ such that, whenever $L \colon C \rightarrow B$ is $\mathcal{G}$-$\delta$-multiplicative,
then $L \otimes \mathrm{id}_{lm} \colon M_{lm}(C) \rightarrow B \otimes M_{lm}$ is
$\mathcal{G}_3$-$\delta_3$-multiplicative. 

Let $B$ be a $C^*$-algebra in $\mathbf{D}$. Suppose that $L \colon C \rightarrow B$ is
a $\mathcal{G}$-$\delta$-multiplicative c.p.c. map such that $\|L(P) - 1_B\| < \delta$.

Let $L_1 = L \otimes \mathrm{id}_{M_{lm}} \colon M_{lm}(C) \rightarrow B \otimes M_{lm}$.
Then there is a unitary $V \in M_{lm}(B)$ such that $\| L_1(W) - V \| < \delta_1$. Also there is a homomorphism
$\phi \colon M_l(eM_m(C)e) \rightarrow M_{lm}(B)$ such that
\[
\| \phi(a) - L_1(a)\| < \frac{\delta_0}{5}, \quad \text{for all\,\,} a \in \mathcal{F}_0.
\]

We can compute that
\[
\| V\phi(Q)V^* - p \otimes e_{1,1} \| < \| L_1(W)L_1(W^*(P \otimes e_{1, 1})W)L_1(W) - L_1(P\otimes e_{1,1})\|
+ \frac{\delta_0}{5} + 2 \delta_1 < \frac{\delta_0}{5} +  4\delta_1 \leq \delta_0.
\]

Therefore there is a unitary $U \in M_{lm}(B)$ such that $UV\phi(Q)V^*U^* = p \otimes e_{1, 1}$ and
$\|U - 1_{M_{lm}(B)}\| < \ep_0$.

Define a homomorphism
\[
\tilde{\phi} \colon C \rightarrow B \otimes M_{lm}, \quad \tilde{\phi}(a) = UV\phi(W^* (a \otimes e_{1, 1}) W)V^*U^*.
\]
Since $\tilde{\phi}(1_C) = p \otimes e_{1, 1} \in B \otimes e_{1, 1}$, we may regard $\tilde{\phi}$
as a homomorphism from $C$ to $B$. For any $a \in \mathcal{F}$, we have:
\begin{align*}
\| \tilde{\phi}(a) - L(a) \| & = \| UV\phi(W^* (a \otimes e_{1, 1}) W)V^*U^* - L_1(a \otimes e_{1, 1})\|\\
&< \| V\phi(W^* (a \otimes e_{1, 1}) W)V^* - L_1(W)L_1( W^*(a \otimes e_{1, 1})W)L_1(W)^*\| + 2\ep_0 + 2 \delta_1\\
&< \| \phi(W^* (a \otimes e_{1, 1}) W) - L_1( W^*(a \otimes e_{1, 1})W)\| + 2\ep_0 + 4 \delta_1\\
&< \ep_0 + 4 \delta_1 + \frac{\delta_0}{5} < \ep_0 + \delta_0 < \ep.
\end{align*}
\end{proof}

As a immediate corollary, we have the following stability result:
\begin{theorem}
Let $\Theta = (\theta_{j, k})_{n\times n}$ be a  real skew-symmetric matrix. If all entries $\theta_{j, k}$
are rational, then the rotation relation with respect to $\Theta$ is stable in the class of unital simple
purely infinite $C^*$-algebras.
\end{theorem}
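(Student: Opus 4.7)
The plan is to reduce to the results already established for weak stability of homogeneous algebras. Since $\Theta$ is rational, by Corollary 4.2 of \cite{Elliott-Lihanfeng} (used in Proposition \ref{P_RAisAH}), $\mathcal{A}_\Theta$ is strongly Morita equivalent to $C(\T^n)$. Hence, as in the proof of Proposition \ref{P_RAisAH}, there exist a positive integer $N$ and a projection $P \in M_N(C(\T^n))$ such that
\[
\mathcal{A}_\Theta \cong PM_N(C(\T^n))P.
\]
In particular, $\mathcal{A}_\Theta$ is a unital homogeneous algebra.

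The main step is to show that $C(\T^n)$ is weakly stable with respect to the class $\mathbf{D}$ of unital simple purely infinite $C^*$-algebras. Equivalently, for any $\ep > 0$ there is a $\delta > 0$ such that, for any $B \in \mathbf{D}$ and any $n$-tuple of unitaries $w_1, \dots, w_n \in B$ with $\|w_iw_j - w_jw_i\| < \delta$, one can find commuting unitaries $\tilde w_1, \dots, \tilde w_n$ in $B$ with $\|w_i - \tilde w_i\| < \ep$. This is the weak semiprojectivity of $C(\T^n)$ in the purely infinite simple setting; in such algebras all the relevant $K$-theoretic and $KK$-theoretic obstructions (bott-type invariants, higher winding invariants) can be absorbed because $K_*(B)$ has no positivity constraints and purely infinite simple algebras have real rank zero, stable rank one in the generalized sense of Cuntz, and satisfy strong absorption properties. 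This step is the main obstacle and is where one appeals to the available results on stability of commuting relations in Kirchberg algebras (cf.\ Lin's framework in \cite{LinTAM04} together with the vanishing of the relevant obstructions in the purely infinite simple case).

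Granting weak stability of $C(\T^n)$, the class $\mathbf{D}$ is closed under taking matrix algebras (trivially) and under unital hereditary subalgebras (by the structure of unital simple purely infinite $C^*$-algebras, since a unital corner of such an algebra is again unital simple purely infinite). Therefore Proposition \ref{P_wsHA} applies and yields that $\mathcal{A}_\Theta \cong PM_N(C(\T^n))P$ is weakly stable with respect to $\mathbf{D}$.

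Finally, given $\ep > 0$, let $\mathcal{F} = \{u_1, \dots, u_n\}$ be the set of canonical generators of $\mathcal{A}_\Theta$, and let $\delta_0 > 0$ and $\mathcal{G} \subset \mathcal{A}_\Theta$ be the weak stability data associated to $\ep$ and $\mathcal{F}$. Apply Proposition \ref{P_AlmostHExist} with these $\mathcal{G}$, $\eta := \delta_0$, and $\ep$ to obtain $\delta > 0$: any $n$-tuple of unitaries $v_1, \dots, v_n$ in a unital $C^*$-algebra $A \in \mathbf{D}$ satisfying $\|v_kv_j - e^{2\pi i \theta_{j,k}}v_jv_k\| < \delta$ produces a unital $\mathcal{G}$-$\delta_0$-multiplicative c.p.c.\ map $L \colon \mathcal{A}_\Theta \to A$ with $\|L(u_j) - v_j\| < \ep$. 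By the weak stability of $\mathcal{A}_\Theta$, there is a unital homomorphism $\phi \colon \mathcal{A}_\Theta \to A$ with $\|\phi(u_j) - L(u_j)\| < \ep$ for $j = 1, \dots, n$. Setting $\tilde v_j = \phi(u_j)$ gives unitaries satisfying exactly $\tilde v_k\tilde v_j = e^{2\pi i \theta_{j,k}}\tilde v_j\tilde v_k$ and $\|\tilde v_j - v_j\| < 2\ep$ (readjusting $\ep$ at the outset), completing the proof.
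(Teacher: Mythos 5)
Your overall route is the same as the paper's: use the rationality of $\Theta$ and strong Morita equivalence with $C(\T^n)$ to write $\mathcal{A}_\Theta \cong PM_N(C(\T^n))P$, reduce weak stability of this corner to weak stability of $C(\T^n)$ via Proposition \ref{P_wsHA} (noting that the class of unital simple purely infinite algebras is closed under matrix algebras and unital corners), and then convert weak stability into stability of the rotation relations via Proposition \ref{P_AlmostHExist}. Those reductions are fine, including the final $2\ep$ bookkeeping.

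However, the crucial step --- that $C(\T^n)$ itself is weakly stable with respect to the class of unital simple purely infinite $C^*$-algebras --- is not proved in your proposal; you explicitly flag it as ``the main obstacle'' and then dispose of it with a heuristic about vanishing $K$-theoretic obstructions and a vague appeal to ``Lin's framework in \cite{LinTAM04}.'' This is a genuine gap, and the pointer is to the wrong place: the results of \cite{LinTAM04} used in this paper (Theorems 7.5 and 7.6) require either nuclearity of the target or an injectivity-type condition $\|L(a)\| > \tfrac{1}{2}\|a\|$, neither of which is available here, since the theorem claims stability in \emph{all} unital simple purely infinite $C^*$-algebras with no nuclearity hypothesis and no extra conditions on the unitaries. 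The paper instead quotes Theorem 1.19 of \cite{Lin97}, which gives exactly the needed weak stability of $C(\T^n)$ (more generally $C(X)$) with respect to purely infinite simple targets; without that specific input your argument does not close. Two smaller inaccuracies in the heuristic: purely infinite simple $C^*$-algebras do not have stable rank one (their stable rank is infinite), and the assertion that all bott-type obstructions ``can be absorbed'' is precisely the content of the theorem being invoked, not a reason one may skip it.
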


\begin{proof}
As we discussed at the beginning of section \ref{S_SofTR1}, stability of rotation relation with respect to $\Theta$
is equivalent to that almost homomorphism from $\mathcal{A}_{\Theta}$ is close to a homomorphism.
If all entries of $\Theta$ are rational, then $\mathcal{A}_{\Theta}$ is Morita equivalent to $C(\T^n)$,
by Corollary 4.2 of \cite{Elliott-Lihanfeng}. By a similar argument as in Proposition \ref{P_RAisAH},
we have $\mathcal{A}_{\Theta} \cong PM_m(C(\T^n))P$, for some positive integer $m$ and some
projection $P \in M_m(C(\T^n))$. Let $\mathbf{D}$ be the class of unital simple purely infinite $C^*$-algebras. By Theorem 1.19 of \cite{Lin97}, $C(\T^n)$ is weakly stable with respect to $\mathbf{D}$.
By Proposition \ref{P_wsHA}, $\mathcal{A}_{\Theta}$ is also weakly stable with respect to $\mathbf{D}$.
\end{proof}

The following is an immediate consequence of Theorem 7.6 of \cite{LinTAM04}:
\begin{theorem}\label{T_stabilityPI}
Let $\Theta = (\theta_{j, k})_{n\times n}$ be a  real skew-symmetric matrix. If $\Theta$ is non-degenerate, then the rotation relation with respect to $\Theta$ is stable in the class of unital simple
nuclear purely infinite $C^*$-algebras.
\end{theorem}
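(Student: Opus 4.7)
The plan is to recast stability of the $\Theta$-rotation relations as weak stability (in the sense of the definition at the beginning of this section) of $\mathcal{A}_\Theta$ relative to the class $\mathbf{D}$ of unital simple nuclear purely infinite $C^*$-algebras, and then apply Theorem 7.6 of \cite{LinTAM04}. The equivalence goes as follows. In one direction, given unitaries $v_1,\dots,v_n \in A$ that satisfy the rotation relations with respect to $\Theta$ up to $\delta$, Proposition \ref{P_AlmostHExist} supplies a unital $\mathcal{G}$-$\eta$-multiplicative c.p.c.\ map $L\colon \mathcal{A}_\Theta \to A$ with $\|L(u_i)-v_i\|<\varepsilon'$ for prescribed $\mathcal{G}$, $\eta$, $\varepsilon'$. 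In the other direction, a unital $*$-homomorphism $\phi\colon \mathcal{A}_\Theta \to A$ gives unitaries $\phi(u_i)$ that satisfy the rotation relations exactly. So to obtain a triple $\tilde v_i$ with $\tilde v_k\tilde v_j = e^{2\pi i \theta_{j,k}}\tilde v_j\tilde v_k$ and $\|\tilde v_i - v_i\|<\varepsilon$, it suffices to show that for any $\varepsilon>0$ and finite $\mathcal{F}\subset \mathcal{A}_\Theta$ there exist $\delta>0$ and finite $\mathcal{G}\subset \mathcal{A}_\Theta$ such that every unital $\mathcal{G}$-$\delta$-multiplicative c.p.c.\ map $L\colon \mathcal{A}_\Theta \to A$ into some $A\in\mathbf{D}$ is within $\varepsilon$ on $\mathcal{F}$ of an actual homomorphism.

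Next I would verify that $\mathcal{A}_\Theta$ lies in the class of algebras to which Theorem 7.6 of \cite{LinTAM04} applies. Since $\Theta$ is non-degenerate, $\mathcal{A}_\Theta$ is simple by Theorem \ref{simple and tracial}. It is separable, unital, and nuclear, since iterated crossed products by $\mathbb{Z}$ preserve these properties, starting from $\mathbb{C}$; by the same token, the Pimsner--Voiculescu sequence and the UCT-closure properties imply that $\mathcal{A}_\Theta$ satisfies the UCT. Iterating the Pimsner--Voiculescu exact sequence as in the proof of Proposition \ref{P_KofNCT} shows that $K_*(\mathcal{A}_\Theta)$ is a finitely generated torsion-free abelian group, isomorphic to $\mathbb{Z}^{2^{n-1}}\oplus\mathbb{Z}^{2^{n-1}}$. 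These are exactly the algebraic hypotheses that the Kirchberg--Phillips classification and Lin's weak semiprojectivity machinery require of the domain.

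With these verifications in hand, Theorem 7.6 of \cite{LinTAM04} directly yields that $\mathcal{A}_\Theta$ is weakly semiprojective with respect to $\mathbf{D}$, which by the reduction above is precisely the stability statement we want. The main obstacle, which is handled inside \cite{LinTAM04}, is the combined existence--uniqueness argument at the level of invariants: an almost multiplicative map into a Kirchberg algebra defines a well-behaved $KL$-element, a $KL$-element lifts to an honest homomorphism by the existence half of the Kirchberg--Phillips classification, and two such homomorphisms inducing the same $KL$-data are approximately unitarily equivalent by the uniqueness half, which in the nuclear purely infinite setting does not involve any tracial data (this is why the trace conditions from Section \ref{S_SofTR1} are vacuous here). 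Conjugating by a suitable unitary close to the identity then produces the desired exactly $\Theta$-rotating triple within $\varepsilon$ of the original unitaries.
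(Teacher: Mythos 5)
Your proposal is correct and follows essentially the same route as the paper, which simply deduces the theorem as an immediate consequence of Theorem 7.6 of \cite{LinTAM04}; you additionally spell out the reduction of stability of the rotation relations to weak stability of $\mathcal{A}_\Theta$ (via Proposition \ref{P_AlmostHExist}) and the verification that $\mathcal{A}_\Theta$ is unital, separable, nuclear, simple (by non-degeneracy), satisfies the UCT and has finitely generated torsion-free $K$-theory, all of which the paper leaves implicit. The only quibbles are terminological: what you call ``weakly semiprojective with respect to $\mathbf{D}$'' is the paper's notion of weak stability with respect to $\mathbf{D}$, and your $K_*(\mathcal{A}_\Theta)\cong\Z^{2^{n-1}}\oplus\Z^{2^{n-1}}$ is the correct count.
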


In the general case, we can prove stability result with an additional injective condition. 
We remark here that, unlike the stably finite case, we don't know any counterexamples 
in the purely infinite case even without any injective condition. So it is possible that
this injective condition can be removed.

\begin{theorem}
Let $\Theta = (\theta_{j, k})_{n \times n}$ be a real skew-symmetric matrix.
Let $u_1, u_2, \dots, u_n$ be the canonical unitaries in $\mathcal{A}_\Theta$. 
Then for any $\varepsilon>0$, there exists $\delta > 0$,
a finite subset $S \subset \C$,
and a positive integer $N $ satisfying the following:
For any unital simple nuclear purely infinite $C^*$-algebra $A$ 
and any unitaries $v_1,v_2, \dots, v_n\in A$ such that
\begin{enumerate}
\item $\|v_kv_j-e^{2\pi i \theta_{j,k}}v_jv_k\|<\delta, \,\, 1 \leq j , k \leq n$;
\item $\|\sum_{-N \leq l_1, l_2, \dots, l_n \leq N}\lambda_{l_1,l_2,\dots,l_n}v_1^{l_1}v_2^{l_2}\cdots v_n^{l_n}\|
> \frac{2}{3}\|\sum_{-N \leq l_1, l_2, \dots, l_n \leq N}\lambda_{l_1,l_2,\dots,l_n}u_1^{l_1}u_2^{l_2}\cdots u_n^{l_n}\|$,
where\\
 $\lambda_{l_1,l_2,\dots,l_n}$ ranges over $S$,
\end{enumerate}
\
 there exist unitaries $\tilde{v}_1,\tilde{v}_2, \dots, \tilde{v}_n\in A$ such that
$$\tilde{v}_k\tilde{v}_j=e^{2\pi i\theta_{j,k} }\tilde{v}_j\tilde{v}_k\,\,\,\,\mbox{and}\,\,\,\,\|\tilde{v}_j-v_j\|<\varepsilon,\,\,\,j,k=1,2, \dots, n.$$
\end{theorem}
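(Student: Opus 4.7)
The plan is to reduce stability to the existence and uniqueness theorems for maps from $\mathcal{A}_\Theta$ into $A$, mirroring the strategy of Section \ref{S_SofTR1} but adapted to the purely infinite setting. The roles played in the tracial rank one case by Theorems \ref{T_exist} and \ref{T_unique} will be played here by the corresponding existence theorem for maps from AH-algebras into unital simple nuclear purely infinite $C^*$-algebras, and by Theorem 7.6 of \cite{LinTAM04} for uniqueness (which is what Theorem \ref{T_stabilityPI} was invoking in the non-degenerate case).

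First I would apply Proposition \ref{P_AlmostHExist} so that, for sufficiently small $\delta$, the almost rotation relations (1) produce a unital $\mathcal{G}$-$\eta$-multiplicative c.p.c. map $L : \mathcal{A}_\Theta \to A$ with $\|L(u_j) - v_j\| < \varepsilon/3$ for $j = 1, 2, \dots, n$. Using that $\mathcal{A}_\Theta$ is an AH-algebra satisfying the UCT (Proposition \ref{P_RAisAH}), I would then build an element $\kappa \in KL_e(\mathcal{A}_\Theta, A)^{++}$ that agrees with $[L]$ on a prescribed finite subset $\mathcal{P} \subset \underline{K}(\mathcal{A}_\Theta)$; positivity is automatic, since in a purely infinite simple $A$ every nonzero class in $K_0(A)$ is positive, and the trace compatibility that dominated Section \ref{S_SofTR1} is vacuous here. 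By the existence theorem for homomorphisms from AH-algebras to unital Kirchberg algebras, $\kappa$ is realized by a unital $*$-homomorphism $\phi : \mathcal{A}_\Theta \to A$ with $[\phi] = [L]$ on $\mathcal{P}$. Since $\mathcal{A}_\Theta$ is simple (or by directly arranging it via the construction of $\phi$) one may take $\phi$ injective.

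The injectivity hypothesis (2) is tailor-made to supply the $S$-$N$-injectivity datum required by Theorem 7.6 of \cite{LinTAM04}: it bounds from below the norms of $L(p(u_1, \dots, u_n))$ for polynomials $p$ with coefficients in the finite set $S$ and degrees at most $N$, preventing $L$ from collapsing substantial parts of $\mathcal{A}_\Theta$, and the injective homomorphism $\phi$ satisfies the same bounds automatically. With $[L]$ and $[\phi]$ matched on $\mathcal{P}$ and both maps sharing the injectivity datum, the uniqueness theorem produces a unitary $W \in A$ with $\|W^*\phi(u_j)W - L(u_j)\| < \varepsilon/3$; setting $\tilde{v}_j = W^*\phi(u_j)W$ then gives exact rotation relations with $\|\tilde{v}_j - v_j\| < \varepsilon$. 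The main technical obstacle is choosing $S \subset \mathbb{C}$ and $N$ uniformly in $A$ in advance so that hypothesis (2) actually produces the precise injectivity datum demanded by Theorem 7.6 of \cite{LinTAM04} for the fixed target finite set $\mathcal{F} = \{u_1, \dots, u_n\}$ and tolerance $\varepsilon/3$; this is accomplished by letting $S$ consist of coefficients realizing near-suprema across a norm-dense family of polynomial test elements in a suitable matrix ampliation of $\mathcal{A}_\Theta$, and $N$ large enough to make the resulting finite polynomial family dense among the polynomial test elements relevant for $\mathcal{F}$.
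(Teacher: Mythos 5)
Your overall architecture does not match what the cited machinery actually provides, and the gaps occur at exactly the two places where the real work of this theorem lies. First, the tool you invoke for the ``uniqueness'' step is misidentified: Theorem 7.6 of \cite{LinTAM04} is used in this paper (Theorem \ref{T_stabilityPI}) as a weak-stability statement for \emph{simple} nuclear domains, where no injectivity datum is needed precisely because a unital almost multiplicative map from a simple algebra cannot collapse anything in the limit; it is not a comparison theorem between an almost multiplicative map $L$ and a homomorphism $\phi$ with matching $KL$-data. The relevant result for the general (possibly degenerate $\Theta$) case is Theorem 7.5 of \cite{LinTAM04}: it takes a $\mathcal{G}_0$-$\delta_0$-multiplicative c.p.c.\ map satisfying the quantitative non-collapsing condition $\|L(a)\|>\tfrac12\|a\|$ for $a$ in a finite set $\mathcal{G}_0$ and directly produces a homomorphism close to $L$ on $\{u_1,\dots,u_n\}$ --- so the entire detour through extending $[L]|_{\mathcal{P}}$ to $\kappa\in KL_e(\mathcal{A}_\Theta,A)^{++}$, an existence theorem, and a uniqueness theorem is both unnecessary and unsupported as you have set it up. Relatedly, your remark ``since $\mathcal{A}_\Theta$ is simple \dots one may take $\phi$ injective'' is false in the very case this theorem is designed for: for degenerate $\Theta$ the algebra $\mathcal{A}_\Theta$ is not simple, and producing an injective (or suitably full) $\phi$ realizing a prescribed $KL$-class is a genuine additional problem that your parenthetical does not resolve, yet your uniqueness step depends on it.

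Second, the step you defer to the last paragraph --- choosing $S$ and $N$ in advance so that hypothesis (2) yields the required norm datum --- is the heart of the proof, and your proposed mechanism cannot work as described: a finite coefficient set $S$ and bounded degree $N$ give only finitely many polynomials, which cannot be ``dense among the polynomial test elements.'' The correct order of quantifiers is the one the paper uses: Theorem 7.5 of \cite{LinTAM04} first produces the finite set $\mathcal{G}_0$ and $\delta_0$ (depending only on $\mathcal{A}_\Theta$ and $\varepsilon$); one then chooses $N$ and $S$ so that every $a\in\mathcal{G}_0$ is within $\sigma/24$ (where $\sigma=\min\{\|a\|:a\in\mathcal{G}_0\}$) of a polynomial $\sum\lambda_{l_1,\dots,l_n}u_1^{l_1}\cdots u_n^{l_n}$ with $\lambda\in S$, $|l_j|\le N$; finally, with $L$ from Proposition \ref{P_AlmostHExist} satisfying $\|L(u_j)-v_j\|$ small and $L$ sufficiently multiplicative on monomials, one transfers hypothesis (2) (a lower bound on norms of polynomials in the $v_j$) into the estimate $\|L(a)\|>\tfrac12\|a\|$ for all $a\in\mathcal{G}_0$ by an explicit chain of inequalities. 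Without carrying out this quantitative bookkeeping, the constants $\delta$, $S$, $N$ cannot be fixed uniformly in $A$, and the statement is not proved.
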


\begin{proof}
Let $\mathcal{F} = \{u_1, u_2, \dots, u_n\}$. Let $\ep > 0$ be given.
By Theorem 7.5 of \cite{LinTAM04}, there is a finite subset $\mathcal{G}_0 \subset \mathcal{A}_{\Theta}$
and a $\delta_0 > 0$ such that, whenever $A$ is a unital simple nuclear purely infinite $C^*$-algebra and
$L \colon \mathcal{A}_{\Theta} \rightarrow A$ is a $\mathcal{G}_0$-$\delta_0$-multiplicative c.p.c. map
such that $\| L(a) \| > \frac{1}{2} \| a \|$, for all $a \in \mathcal{G}_0$, then there is a homomorphism
$\phi \colon \mathcal{A}_{\Theta} \rightarrow A$ such that
\[
\| \phi(b) - L(b) \| < \frac{\ep}{2}, \quad \text{for all\,\,} b \in \mathcal{F}.
\]

Without loss of generality, we assume that $\| a \| \leq 1$, for all $a \in \mathcal{G}_0$.
Let $\sigma = \min \{ \| a \| \,\vert\, a \in \mathcal{G}_0 \}$. Set $\ep_0 = \frac{\sigma}{24}$.

Since $\mathcal{A}_{\Theta}$ is generated by $u_1, u_2, \dots, u_n$, there is a 
positive integer $N > 0$ and a finite subset $S \subset \C$ such that, for any $a \in \mathcal{G}_0$,
there is some $b$ of the form
\[
b = \sum_{-N \leq l_1, l_2, \dots, l_n \leq N}\lambda_{l_1,l_2,\dots,l_n}u_1^{l_1}u_2^{l_2}\cdots u_n^{l_n}, \quad\lambda_{l_1,l_2,\dots,l_n} \in S
\]
with $\|a - b\| < \ep_0$.

Let $M=\max\{|\lambda||\lambda\in S\}.$ 
Let $\ep_1 = \min \{\frac{\ep}{2}, \frac{\ep_0}{MN^N}\}$. Set
\[
\eta = \frac{\sigma}{48MN^{N+1}}, \quad
\mathcal{G} = \{u_1^{l_1}u_2^{l_2}\cdots u_n^{l_n} \,\vert\, -N \leq l_1, l_2, \dots, l_n \leq N\}.
\]
Choose $\delta$ according to
$\mathcal{G}$, $\eta$ and $\ep_1$ (in place of $\ep_0$) as in Proposition \ref{P_AlmostHExist}.

Now let $A$ be a unital simple nuclear purely infinite $C^*$-algebra,
let $v_1,v_2, \dots, v_n\in A$ be unitaries such that
\begin{enumerate}
\item $\|v_kv_j-e^{2\pi i \theta_{j,k}}v_jv_k\|<\delta,\quad 1\leq j , k \leq n$;
\item $\|\sum_{-N \leq l_1, l_2, \dots, l_n \leq N}\lambda_{l_1,l_2,\dots,l_n}v_1^{l_1}v_2^{l_2}\cdots v_n^{l_n}\|
> \frac{2}{3}\|\sum_{-N \leq l \leq N}\lambda_{l_1,l_2,\dots,l_n}u_1^{l_1}u_2^{l_2}\cdots u_n^{l_n}\|$,
where  $\lambda_{l_1,l_2,\dots,l_n}$ ranges over $S$.
\end{enumerate}

By Proposition \ref{P_AlmostHExist}, there is a $\mathcal{G}$-$\eta$-multiplicative c.p.c.
map $L \colon \mathcal{A}_{\Theta} \rightarrow A$ such that $\| L(u_i) - v_i \| < \frac{\ep_0}{MN^N}$.
Now for any $a \in \mathcal{G}_0$, find some $b$ of the form
\[
b = \sum_{-N \leq l_1, l_2, \dots, l_n \leq N}\lambda_{l_1,l_2,\dots,l_n}u_1^{l_1}u_2^{l_2}\cdots u_n^{l_n}, \,\,\,\lambda_{l_1,l_2,\dots,l_n} \in S
\]
with $\|a - b\| < \ep_0$. We can then compute that
\begin{align*}
\| L(a) \| & > \| L(b) \| - \ep_0\\
&\geq \| \sum_{-N \leq l_1, l_2, \dots, l_n \leq N}\lambda_{l_1,l_2,\dots,l_n}L(u_1)^{l_1}L(u_2)^{l_2}\cdots L(u_n)^{l_n}\| - M(2N)N^N\eta - \ep_0\\
&\geq \| \sum_{-N \leq l_1, l_2, \dots, l_n \leq N}\lambda_{l_1,l_2,\dots,l_n}v_1^{l_1} v_2^{l_2} \cdots v_n^{l_n}\| - MN^N\ep_1- M(2N)N^N\eta - \ep_0\\
& > \frac{2}{3} \| b \| - \ep_0- M(2N)N^N\eta - \ep_0 \\
& > \frac{2}{3} \| a \| - \frac{2}{3}\ep_0 - \ep_0- \frac{\sigma}{24} - \ep_0\\
&\geq \frac{2}{3} \| a \| - \frac{\sigma}{6} > \frac{1}{2} \| a \|.
\end{align*}

By Theorem 7.5 of \cite{LinTAM04}, 
there is a homomorphism $\phi \colon \mathcal{A}_{\Theta} \rightarrow A$ such that
\[
\| \phi(u_j) - v_j \| < \|\phi(u_j) -L(u_j)\| + \|L(u_j) - v_j\| < \frac{\ep}{2} + \frac{\ep}{2} = \ep.
\]
Let $\tilde{v}_j = \phi(u_j)$.  These are desired unitaries.
\end{proof}

\section*{Acknowledgments}
The first named author was supported by the National Natural Science
Foundation of China (No. 11401256) and the Zhejiang Provincial Natural Science Foundation of China (No. LQ13A010016). 

Email: huajiajie2006@hotmail.com

Jiajie Hua

College of Mathematics, Physics and Information engineering
 
 Jiaxing University
 
Jiaxing,  Zhejiang, 314000, China

\

Email: qingyunw@uoregon.edu 

Qingyun Wang

Department of Mathematics

University of Oregon

Euegne, 97403, USA

\end{document}